\documentclass[10pt]{amsart}
\usepackage{texmaX,semmaX,semtkzX}
\usepackage{dsfont}
\usepackage{makecell}
\usepackage{multirow}
\usepackage{verbatim}

\def\mod{\quad\textup{mod }}
\def\JX{\Jac_X}
\def\JY{\Jac_Y}
\def\cX{{\mathcal{X}}}
\def\triv{\mathds 1}
\def\ct{\cC_{\Theta}^{\textup{sf}}}
\def\dcg{\mathfrak{X}}
\def\chicyc{\chi_{\rm{cyc}}}
\def\iso{\simeq}
\def\mod{\quad\textup{mod }} 
\def\L{L}

\def\CT{\cC_{-\Theta}^{\textup{sf}}}
\def\Aut{\textup{Aut}}
\def\K{\mathcal{K}}

\def\Cluster #1 = #2;{\node[cluster=#2] (#1) {};}
\def\ClusterL #1[#2] = #3;{
  \node[cluster=#3] (#1t) {}; \node[clabelL=#1] (#1l) {$#2$}; \node[clouter=(#1t)(#1l)] (#1) {};}
\def\ClusterD #1[#2] = #3;{
  \node[cluster=#3] (#1t) {}; \node[clabelD=#1] (#1d) {$#2$}; \node[clouter=(#1t)(#1d)] (#1) {};}
\def\ClusterLD #1[#2][#3] = #4;{
  \node[cluster=#4] (#1t) {}; \node[clabelL=#1] (#1l) {$#2$}; 
  \node[clabelD=#1] (#1d) {$#3$}; \node[clouter=(#1t)(#1l)(#1d)] (#1) {};}
\def\ClusterLDName #1[#2][#3][#4] = #5;{
  \node[cluster=#5] (#1t) {}; \node[clabelL=#1] (#1l) {$#2$}; 
  \node[clabelD=#1] (#1d) {$#3$}; 
  \node[scale=\cnamescale,above=\clustersep/3 of #1t,inner sep=0, outer sep=0] (#1n) {$#4$}; 
  \node[clouter=(#1l)(#1d)(#1t)] (#1) {};}

\newcommand{\Langle}{\langle\!\langle}
\newcommand{\Rangle}{\rangle\!\rangle}

\theoremstyle{theorem}
\newtheorem*{conjecture*}{Conjecture}
\newtheorem{convention}{Convention}[section]

\advance\oddsidemargin -1cm
\advance\evensidemargin -1cm
\advance\textwidth 2cm
\advance\textheight 0.3cm

\begin{document}

\title{Parity of ranks of Jacobians of curves}
\author{Vladimir Dokchitser, Holly Green, Alexandros Konstantinou, Adam Morgan}

\address{University College London, London WC1H 0AY, UK}
\email{v.dokchitser@ucl.ac.uk}
\email{alexandros.konstantinou.16@ucl.ac.uk}

\address{University of Bristol, Bristol, BS8 1UG, UK}
\email{holly.green@bristol.ac.uk}

\address{University of Glasgow, Glasgow, G12 8QQ, UK.}
\email{adam.morgan@glasgow.ac.uk}

\subjclass[2010]{11G30 (11G05, 11G10, 11G20, 11G40, 14K02, 14K15)}

\marginpar{}

\begin{abstract}
We investigate Selmer groups of Jacobians of curves that admit an action of a non-trivial group of automorphisms, and give applications to the study of the parity of Selmer ranks. Under the Shafarevich--Tate conjecture, we give an expression for the parity of the Mordell--Weil rank of an arbitrary Jacobian in terms of purely local invariants; the latter can be seen as an arithmetic analogue of local root numbers, which, under the Birch--Swinnerton-Dyer conjecture, similarly control parities of ranks of abelian varieties. As an application, we give a new proof of the parity conjecture for elliptic curves. The core of the paper is devoted to developing the arithmetic theory of Jacobians for Galois covers of curves, including decomposition of their $L$-functions, 
and the interplay between Brauer relations and~Selmer~groups.
\end{abstract}

\maketitle

\setcounter{tocdepth}{1}

\tableofcontents

\section{Introduction}\label{Sec_intro}

Tate's proof of the analytic continuation and the functional equation of Hecke $L$-functions interpreted the sign in the functional equation as a product of certain local constants. 
The existence of such an expression for Artin representations was proved by Langlands and Deligne, who, moreover, extended the definition of the local constants to more general Galois representations. The local Langlands correspondence tells us that these local constants are compatible with those arising for $L$-functions of automorphic forms. However, the compatibility of this theory with the Birch--Swinnerton-Dyer conjecture and its generalisations remains an open problem. The sign in the functional equation of the $L$-function of a curve $X$ over a number field $K$ determines the parity of the order of vanishing of the $L$-function at the central point $s=1$, and hence should match the parity of the Mordell--Weil rank of its Jacobian $\JX$. In other words, one expects the {\em parity conjecture}
$$
(-1)^{\rk \JX} =  \prod_{v\textup{ place of }K}w(X/K_v), 
$$
where the $w(X/K_v)$ are the local constants (local root numbers) given by Langlands and Deligne. The aim of the present article is to define local constants $w_{\text{arith}}(X/K_v)$ for curves\footnote{strictly speaking, curves together with a choice of map to $\P^1$}
over local fields $X/K_v$ that determine the parity of ranks of Jacobians of curves over number fields:

\begin{theorem}[=Theorem \ref{thm:PCanalogue}(i)]\label{thm:introjacobian}
If the Shafarevich--Tate conjecture holds for Jacobians of all curves over a number field $K$, then for all curves $X/K$
$$   
(-1)^{\rk \JX} = \prod_{v\textup{ place of }K} w_{\rm{arith}}(X/K_v).
$$
\end{theorem}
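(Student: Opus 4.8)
The plan is to realise $\JY$ as an isogeny constituent of the Jacobian of a Galois cover of $\mathbb P^1$ and to detect the parity of its rank by a regulator constant. Given the chosen map $\phi\colon Y\to\mathbb P^1$, pass to its Galois closure $X\to\mathbb P^1$ with group $G$, so that $Y=X/H$ for the point stabiliser $H\leq G$; if necessary one first composes $\phi$ with, or base-changes along, an auxiliary cover of $\mathbb P^1$ (or allows a disconnected $X$, as the conventions permit) so that $G$ carries a Brauer relation $\Theta=\sum_i n_i H_i$ with $H=H_0$ occurring with odd multiplicity. Since $\Jac_{X/H}$ is isogenous to $(\JX)^H$ and $\Jac_{\mathbb P^1}=0$, pulling back along $X\to Y$ identifies $\JY$ up to isogeny with the $H$-fixed part of $\JX$, whence
$$\rk\JY=\dim_{\mathbb Q}\bigl(\JX(K)\otimes\mathbb Q\bigr)^{H}.$$
So it suffices to compute the parity of this invariant of the $\mathbb Q[G]$-module $M:=\JX(K)\otimes\mathbb Q$, which carries the non-degenerate, $G$-invariant Néron--Tate height pairing.

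One then feeds $M$ into the regulator-constant formalism. The class $\mathcal C_\Theta(M)\in\mathbb Q^{\times}/(\mathbb Q^{\times})^{2}$ depends only on $M$ as a rational representation, is multiplicative in $M$, and, because $M$ is self-dual, receives non-square contributions only from the orthogonal self-dual rational irreducibles occurring in $M$ (symplectic ones occur with even multiplicity); for a relation $\Theta$ chosen so that $\mathcal C_\Theta$ isolates exactly the $H$-fixed part one obtains $\mathcal C_\Theta(M)\equiv d_\Theta^{\,\rk\JY}$ modulo squares for a suitable class $d_\Theta$. The arithmetic input is a local--global comparison: $\sha(\JX/K)$ sits together with $M$ in the descent exact sequence, the Cassels--Tate pairing makes the Selmer contribution self-dual, and Poitou--Tate duality with the product formula yield
$$\mathcal C_\Theta(M)\cdot\mathcal C_\Theta\bigl(\sha(\JX/K)\bigr)\cdot(\text{torsion terms})\ \equiv\ \prod_{v}\mathcal C_{\Theta,v}\pmod{(\mathbb Q^{\times})^{2}},$$
where each $\mathcal C_{\Theta,v}$ is assembled from purely local data of $\JX/K_v$ --- Tamagawa numbers, component groups and local periods. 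Here the Shafarevich--Tate hypothesis enters: finiteness of $\sha(\JX/K)$ together with the skew-symmetry of Cassels--Tate forces $\mathcal C_\Theta\bigl(\sha(\JX/K)\bigr)$ to be a square, the torsion terms are squares as well, and therefore $\prod_v\mathcal C_{\Theta,v}\equiv d_\Theta^{\,\rk\JY}$.

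It remains to repackage the local factors. One checks that $\mathcal C_{\Theta,v}$, which a priori refers to the global cover $X$, in fact depends only on the curve $Y$ over $K_v$ together with the map $\phi$ over $K_v$; one then defines $w'(Y/K_v)\in\{\pm1\}$ to be the sign recording this local class relative to $d_\Theta$, and verifies that it equals $+1$ for all but finitely many $v$. Granting this, the displayed identity reads $\prod_v w'(Y/K_v)=(-1)^{\rk\JY}$, which is the assertion.

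I expect two genuine obstacles. The first is the group-theoretic and geometric reduction of the first paragraph: arranging, uniformly over all curves $Y$ and all maps $\phi$ --- including the hyperelliptic and cyclic cases, where the naive monodromy group $C_2$ or $C_p$ admits no Brauer relation at all --- an auxiliary cover and a relation $\Theta$ whose regulator constant provably sees $(-1)^{\rk\JY}$ and nothing else, which forces one to control which rational irreducibles of $G$ occur in $\JX$ and how $\Theta$ detects each of them. The second, and the arithmetic heart of the matter, is the local--global identity for the regulator constants of $\JX$: proving it amounts to the analogue, for Jacobians of covers of curves, of the statement that the Birch--Swinnerton-Dyer quotient is a square modulo $\sha$, and it requires expressing the Néron model, Tamagawa numbers and component group of the Jacobian of the (typically highly ramified, high-genus) cover $X$ in terms of the combinatorics of $Y\to\mathbb P^1$ and its reduction, in a way that makes the resulting contributions manifestly local --- which is precisely what turns the $\mathcal C_{\Theta,v}$ into honest local invariants $w'(Y/K_v)$.
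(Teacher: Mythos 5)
Your second paragraph (isogeny from a Brauer relation, invariance of the BSD quotient under isogeny, self-duality, finiteness of $\sha$ to kill its contribution, product formula to make the local terms honest local invariants) is essentially the road the paper takes, via the Kani--Rosen-type isogeny $f_\Phi$, \cite{MR2680426} Theorem 4.3 and a generalisation of Poonen--Stoll deficiency, except that the paper works with dual $p^\infty$-Selmer groups and the $\tau_{\Theta,p}$ formalism rather than with $\Jac_X(K)\otimes\Q$ and N\'eron--Tate heights, using finiteness of $\sha$ only to pass from Selmer to Mordell--Weil ranks. The genuine gap is in your first step: the assumption that one can arrange an auxiliary cover and a Brauer relation $\Theta$ whose regulator constant ``sees $(-1)^{\rk\JY}$ and nothing else'' is not merely an obstacle to be overcome later --- it is false in general. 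The parities reachable through regulator constants of Brauer relations are exactly those attached to characters which are (up to $2\sigma$ and characters not occurring in the module) permutation characters of degree $0$ \emph{and trivial determinant}; this is the content of the paper's induction theorem (Theorem \ref{thm:vind}) together with Lemma \ref{lem:indx}(4). The character controlling $\rk\JY$, namely $\Ind_{H}^{G}\triv-[G\!:\!H]\triv$ (for $G=S_n$, $H=S_{n-1}$ this is $\rho+\triv-n\triv$), has non-trivial determinant, so the sign character must be added to make the determinant trivial, and with it the multiplicity of $\epsilon$ in the Mordell--Weil group --- that is, $\rk\Jac_{X/A_n}$, the rank of the Jacobian of the discriminant curve --- unavoidably enters. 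No choice of $\Theta$, prime, or auxiliary cover removes it.

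The paper's resolution, absent from your plan, is structural: always pass to the $S_n$-closure of $Y\to\P^1_x$ (which also disposes of your cyclic/hyperelliptic monodromy worry, at the cost of allowing disconnected curves), prove Theorem \ref{thm:snbrauer} from the induction theorem to get $\rk\Jac_Y+\rk\Jac_{\Delta_Y}\equiv\sum_v\alpha(Y\to\P^1_x/K_v)\bmod 2$, and then eliminate the residual term $\rk\Jac_{\Delta_Y}$ by a second iteration of the whole construction: a $C_2\times C_2$ relation comparing $\Delta_Y$ with its twist $\Delta^1_Y$ (the $\beta$ term), the $S_m$-machinery applied to $\Delta^1_Y$ viewed as a degree-$m$ cover of the $y$-line (a second $\alpha$ term), and a final $C_2\times C_2$ relation for the resulting curve of the form $w^2=aG(y^2)$ (the $\gamma$ term). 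The local invariant $w'(Y/K_v)$ is by definition the product of these four signs; without this two-stage descent your identity $\prod_v w'(Y/K_v)=(-1)^{\rk\JY}$ cannot be reached, so the proposal as stated does not close.
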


We will not address the compatibility of our arithmetic local constants $w_{\text{arith}}(X/K_v)$ with local root numbers $w(X/K_v)$ in general. 
We will, however, illustrate their use by giving a new proof of the parity conjecture for elliptic curves assuming the finiteness of $\sha$.
(A similar result was proved in \cite[Theorem 1.2]{kurast}, with different assumptions on the Tate--Shafarevich group.)

\begin{theorem}[=Theorem \ref{thm:mainthm}]\label{thm:introEPC} 
Let  $E: y^2 = x^3+ax+b$ be an elliptic curve over a number field~$K$ with $a\neq 0$. Let $E'$ be the elliptic curve given by $y^2=x^3-27bx^2-27a^3x$. If $\Sha(E)$ has finite $3$-primary part and $\Sha(E')$ has finite $2$- and $3$-primary parts, then the parity conjecture holds for $E$.
\end{theorem}

\begin{remark}
When $a=0$, the elliptic curve $E$ has a $3$-isogeny over $K$. Thus if $\Sha(E)$ has finite $3$-primary part, then the parity conjecture holds for $E$ by \cite[Theorem 1.8]{kurast}. 
\end{remark}

In the context of understanding the parity of ranks of Jacobians, the key new feature of our approach is the use of higher genus curves to control the arithmetic of curves of lower genus. For instance, the proof of the above theorem for elliptic curves makes use of auxiliary curves of genus 3. 

The role played by these higher genus curves is similar to that of extensions of the base number field, which one makes when studying Heegner points, Iwasawa theory, the parity conjecture, etc. Smooth projective curves over number fields are in 1--to--1 correspondence with their function fields (finitely generated extensions of $\Q$ of transcendence degree 1). For example, the elliptic curve $E: y^2=x^3\!+\!ax\!+\!b$ over $\Q$ corresponds to $\Q(x,\sqrt{x^3+ax+b})$, and its base change to $\Q(i)$ corresponds to the quadratic extension $\Q(x,\sqrt{x^3\!+\!ax\!+\!b},i)$. The quadratic extension $\Q(\sqrt{x},\sqrt{x^3+ax+b})$ instead corresponds to the double cover of $E$ given by $y^2=t^6\!+\!at^2\!+\!b$. As we shall explain, the theory of Jacobians of such covers of curves mimics that of extensions of the base number field extremely well.

To fix ideas, consider a (smooth, projective) curve $X$ over a number field $K$, let $G$ a be finite group of $K$-automorphisms of $X$, and let $X/G$ denote the quotient curve. In the language of function fields, $G$ is simply the Galois group $\Gal(K(X)/K(X/G))$.

First of all, rational points on Jacobians satisfy {\em ``Galois descent''}  (see \cite[Theorem 1.3]{Etale_paper}, or Theorem \ref{thm:introgaloisdescent}): $G$ acts naturally on $\Jac_X(K)$ and 
\begin{equation*}\label{eq:rationalpts}
  \Jac_X(K)_\C^G \simeq \Jac_{X/G}(K)_\C,\tag{$\dagger$}
\end{equation*} 
where ``${}_\C$'' is shorthand for $\otimes_\Z\C$.

Secondly, $L$-functions of covers of curves can be decomposed analogously to the factorisation of Dedekind zeta functions into Artin $L$-functions. For representations $\tau$ of $G$, we will define $L$-functions $L(X^\tau,s)$ that in particular satisfy {\em ``Artin formalism"} (Definition \ref{def:twistedLfn}, Proposition \ref{prop:artinformalism}):
\begin{equation*}\label{eq:Linductivity}
  L(X^{\tau_1\oplus\tau_2}\!,s)=L(X^{\tau_1}\!,s)L(X^{\tau_2}\!,s)\quad\textup{ and }\quad L(X^{\Ind_H^G\triv},s)=L({X/H},s).  \tag{$\ddagger$}
\end{equation*} 
We will also define the root number $w(X^\tau)$ that controls the sign in the conjectural functional equation of $L(X^\tau,s)$ (Definition \ref{def:twistedroot},  Conjecture \ref{conj:twistedlfunctions}(2)).

Finally, these $L$-functions are related to arithmetic by the following generalisation of the Birch and Swinnerton-Dyer conjecture and the parity conjecture
(see Conjecture \ref{conj:wishful} and Theorem \ref{thm:wishful}). 
It is the analogue of the equivariant versions (or that for ``Artin twists of elliptic curves'') of these conjectures in the context of extensions of the base number field, see e.g. \cite[\S 2]{RohV}  or \cite[Conjecture 1.1]{tamroot}.

\begin{conjecture}\label{conj:wishful_intro}
Let $X/K$ be a curve over a number field and $G$ a finite group of $K$-automorphisms of $X$. Let $\tau$ be a representation of $G$.
\begin{enumerate}
\item $\ord_{s=1}L(X^\tau,s) = \langle \tau,  \Jac_X(K)_\C \rangle$.
\item If $\tau$ is self-dual, then $w(X^\tau)=(-1)^{\langle \tau,  \Jac_X(K)_\C \rangle}$.
\end{enumerate}
\end{conjecture}

Thus, the situation is like in the good old days. In particular, these properties allow us to adapt the method of Brauer relations and regulator constants of \cite{tamroot,MR2680426} from the setting of extensions of number fields to covers of curves. This is the key input for controlling the parity of ranks of Jacobians.
Recall that a Brauer relation in a finite group $G$ is a formal linear combination of subgroups (up to conjugacy) $\Theta = \sum_i H_i - \sum_j H_j'$, such that the associated permutation representations $\bigoplus_i \C[G/H_i]$ and $\bigoplus_j \C[G/H_j']$ are isomorphic. 
The theory of ``regulator constants''  associates to a Brauer relation $\Theta$ and a prime $p|\#G$ a 
representation
$\tau_{\Theta,p}$ (Definition \ref{Def: p-adically nontrivial reps} or \cite{tamroot}, Section 2). 
The multiplicity of these representations in the group of points $\Jac_X(K)_\C$ can be controlled using local data:

\begin{theorem}[see Theorems \ref{isogeny_from_pseudo_brauer} \& \ref{Theorem: Local Formula}]
\label{LocalFormula:Intro}
Let $X$ be a curve over a number field $K$, let $G$ be a finite group of $K$-automorphisms of $X$, let $\Theta=\sum_i H_i - \sum_j H_j'$ be a Brauer relation for $G$ and $p$ a prime. 
\begin{enumerate}[leftmargin=*]
\item There is an isogeny 
$\prod_j \Jac_{X/H_j'} \to \prod_i \Jac_{X/H_i}$ 
inducing an equality of $L$-functions of curves
$
\prod_j L({X/H_j'},s) = \prod_i L({X/H_i},s).
$
\item 
Suppose that $\Omega^{1}(\JX)$ is self-dual as a $G$-representation.\footnote
{This is automatic if $G$ is either a symmetric or a dihedral group, or if $K$ has a real place} If $\sha(\Jac_X)$ is finite, then
$$ 
\langle \tau_{\Theta,p}, \Jac_X(K)_\C\rangle \equiv
 \sum_{v\textup{ place of }K}   \ord_p  \Lambda_{\Theta}(X/K_{v})  \ \mod  2.
$$
\end{enumerate}
\end{theorem}

The precise expression for the local invariant $\Lambda_\Theta$ is given in Definition \ref{def:lemma:local_invariants}. When $\mathcal{K}$ is an extension of $\Q_p$, it is of the form
$$
 \Lambda_\Theta(X/\mathcal{K}) = \frac{\prod_i c(\Jac_{X/H_i})}{\prod_j c(\Jac_{X/H_j'})} \cdot \text{powers of 2 and }p,
$$
where $c$ denotes the local Tamagawa number over $\mathcal{K}$. For the present discussion, the crucial point is that it is a purely local invariant. 

To deduce Theorem \ref{thm:introjacobian}, we will show that the supply of curves, Brauer relations and representations $\tau_{\Theta,p}$ is large enough to determine the parity of the rank of every Jacobian. There is, in fact, a further trick up our sleeve, which puts us at an advantage compared to the theory of regulator constants for extensions of number fields. Specifically, if $H\le G$ is such that the quotient curve $X/H$ has genus $0$, then we have full control of the multiplicity of $\Ind_{H}^G\triv$ in $\Jac_X(K)_\C$: by Frobenius reciprocity and Galois descent ($\dagger$), $\langle \Ind_{H}^G\triv, \Jac_X(K)_\C\rangle = \dim \Jac_X(K)_\C^H = \dim \Jac_{\P^1}(K)_\C=0$. We will illustrate this process in Example \ref{ex:S3part1} below in the case of elliptic curves.

Conjecture \ref{conj:wishful_intro}(2) tells us that the parity of $\langle\tau,\Jac_X(K)_\C\rangle$ is controlled by the root number $w(X^\tau)$, which is defined as the product of local root numbers $w(X^\tau/K_v)$. Compatibility with Theorem \ref{LocalFormula:Intro}(2) requires the following ``product formula'' for $w(X^{\tau_{\Theta,p}}/K_v)(-1)^{\ord_p\Lambda_{\Theta}(X/K_v)}$.

\begin{conjecture}\label{conj:brauerwishful}
Let $X$ be a curve defined over a number field $K$, and $G$ a finite group of $K$-automorphisms of $X$. 
Suppose that $\Omega^{1}(\JX)$ is self-dual as a $G$-representation. 
Then for every Brauer relation $\Theta$ in $G$ and every prime $p$, 
$$
 \prod_{v\textup{ place of }K} w(X^{\tau_{\Theta,p}}/K_v) (-1)^{\ord_p \Lambda_\Theta(X/K_v)} = 1.
$$
\end{conjecture}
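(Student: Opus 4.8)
The plan is to reduce the statement to a purely local comparison, place by place, and then prove it separately for each completion $K_v$. The global product structure of both sides---$w(X^{\tau_{\Theta,p}}/K)=\prod_v w(X^{\tau_{\Theta,p}}/K_v)$ by definition, and $\Lambda_\Theta(X/K)$ built as a product of the $\Lambda_\Theta(X/K_v)$---means it suffices to show that for \emph{each} place $v$,
$$
 w(X^{\tau_{\Theta,p}}/K_v) = (-1)^{\ord_p \Lambda_\Theta(X/K_v)}.
$$
First I would dispose of the places where both sides are forced to be $1$: for $v$ a non-Archimedean place of good reduction for all the quotient Jacobians $\Jac_{X/H_i}$, $\Jac_{X/H_j'}$, the local root number of an unramified symplectic representation is $1$, and the Tamagawa-number formula for $\ord_p\Lambda_\Theta$ recalled before Theorem \ref{LocalFormula:Intro} (valid for odd $p$, $v\nmid p\infty$) vanishes since good reduction forces trivial Tamagawa numbers. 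So the conjecture is a statement about the finitely many ``bad'' places, together with the places above $p$ and the Archimedean places.

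The core of the argument is then a case analysis driven by the structure of $\tau_{\Theta,p}$: as emphasised in the introduction, both sides are additive under ``induction in degree $0$'' from the dihedral (and $C_2\times C_2$) building blocks, so by Theorem \ref{thm:introvind}-style bookkeeping one reduces to $G$ itself being $C_2\times C_2$, $D_8$, or $D_{2p}$ for odd $p$, with $\Theta$ the relation from Example \ref{ex:tautable} and $p$ the associated prime. In each of these cases $\tau_{\Theta,p}$ is an explicit small self-dual representation---$\triv\oplus\epsilon_a\oplus\epsilon_b\oplus\epsilon_c$ or $\triv\oplus\epsilon\oplus\rho$---so $\Hom_G(\tau_{\Theta,p},H^1_\ell(X))$ decomposes into the $H^1$'s of a handful of quotient curves (the $X_1, X_2$ or $E, D$ of Examples \ref{ex:C2C2rankrecipe}, \ref{ex:S3rankrecipe}), and $w(X^{\tau_{\Theta,p}}/K_v)$ becomes a product of \emph{honest} local root numbers $w(\Jac_{X_i}/K_v)$ of Jacobians of curves over $K_v$. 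On the arithmetic side, for $v\nmid p\infty$ one uses the Tamagawa-number expression for $\ord_p\Lambda_\Theta$ directly; for $v\mid p$ and for Archimedean $v$ one falls back on Definition \ref{def:lemma:local_invariants} of $\Lambda_\Theta$ (involving, respectively, the $p$-adic valuation of a ratio of periods/Néron differentials via the regulator-constant formalism, and a sign from $\Omega^1$). The claim in each case is exactly the local comparison between Tamagawa numbers, periods and root numbers that is the content of the $p$-parity conjecture results of Kramer, Dokchitser--Dokchitser, etc.: indeed, the $C_2\times C_2$ case with $g$ quadratic is literally \cite[Corollary 5.8]{kurast}, and the general $D_{2p}$ case is the natural ``isogeny local formula'' matching of \cite{MR2680426}-type.

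The main obstacle---and the reason this is stated as a conjecture---is the $D_{2p}$ (equivalently $S_3$, $p=3$) case at places of bad reduction, and in particular at $v\mid p$. There the root number $w(\Jac_E/K_v)$ for an elliptic curve $E/K_v$ with additive, potentially good or potentially multiplicative reduction at a place above $p$ is genuinely delicate (it depends on the inertial type and, for $p=2,3$, on subtle local data), and matching it against $\ord_p$ of a ratio of Tamagawa numbers and periods of $E$, $D$, $B$ is precisely the open ``first step'' flagged after Theorem \ref{thm:recipe}. I would attack it by pulling back to the cover $X=B$, where the $S_3$-action lets one compute $\ord_3\Lambda_\Theta(B/K_v)$ via the regulator constant $\mathcal{C}_\Theta(H^1_\ell(X))$ and relate it to the local Galois representation $H^1_\ell(B)$ as an $S_3$-representation over $K_v$; the root number side then becomes a statement about $\epsilon$-factors of $2$-dimensional symplectic pieces of $H^1_\ell(B)$, which one hopes to pin down using the compatibility of $\epsilon$-factors with induction and the explicit local structure of $S_3$-covers. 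Making this last comparison unconditional---especially separating the genuinely potentially-good-reduction sub-case at $v=3$---is where I expect the real work, and the honest point where the argument currently stalls.
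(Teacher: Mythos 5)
The statement you are trying to prove is stated in the paper as a \emph{conjecture}: the paper offers no proof of it, only consequences (Corollary \ref{prop:brauerwishful}, Corollary \ref{cor:howtoproveparity}), and explicitly flags the key case (the $S_3$/$D_{2p}$ comparison, especially at bad places and places above $p$) as an open problem. Your proposal does not close this: by your own admission the argument ``stalls'' exactly at the local comparison of root numbers with Tamagawa numbers, periods and regulator constants for the dihedral relation at places of bad reduction and $v\mid p$. That comparison is not a technical loose end --- it \emph{is} the content of the conjecture; everything before it in your write-up is reduction bookkeeping. So what you have is a strategy outline consistent with the paper's discussion (the $C_2\times C_2$ case being \cite[Corollary 5.8]{kurast} is indeed noted after Theorem \ref{thm:recipe}), not a proof.

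Two further points in the reduction itself are unjustified. First, you strengthen the conjecture to a place-by-place identity $w(X^{\tau_{\Theta,p}}/K_v)=(-1)^{\ord_p\Lambda_\Theta(X/K_v)}$. The conjecture only asserts the global product is $1$; the local terms $\Lambda_\Theta(X/K_v)$ involve the correction term $\ct(\Omega^1(\JX))$ and period/deficiency factors whose individual local contributions have no reason to match local root numbers one place at a time (the paper's own proofs, e.g. of Theorem \ref{Theorem: Local Formula}, rely on global cancellations via the product formula), so the local identity is a strictly stronger claim that would need separate justification and may simply be false at individual places. Second, the reduction ``by Theorem \ref{thm:introvind}-style bookkeeping to $G=C_2\times C_2$, $D_8$, $D_{2p}$'' is not available here: Theorem \ref{thm:vind} decomposes permutation characters of degree $0$ and trivial determinant, not arbitrary Brauer relations together with their regulator constants, and the conjecture is stated for an arbitrary Brauer relation $\Theta$ in an arbitrary $G$. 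Inductivity of regulator constants (\cite[Theorem 2.56]{tamroot}) lets one push relations up from subquotients, but it does not show that every $\Theta$ (with its $\tau_{\Theta,p}$ and its $\Lambda_\Theta$) is an integer combination of relations induced from those three families, so even the reduction step needs an argument that is not supplied.
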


Our expression for the parity of the rank of a Jacobian in Theorem \ref{thm:introjacobian} is built up from Brauer relations. Thus we can also deduce:

\begin{theorem}[=Corollary \ref{cor:howtoproveparity}]\label{thm:PCconj}
If the Shafarevich--Tate conjecture and Conjecture \ref{conj:brauerwishful} hold for all Jacobians of curves over a fixed number field $K$, then so does the parity conjecture, that is
$$
  (-1)^{\rk \Jac_X} = w(\Jac_X)
$$
for every curve $X$ defined over $K$.
\end{theorem}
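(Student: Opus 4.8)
The plan is to run, with root numbers attached, the same reduction that underlies Theorem~\ref{thm:introjacobian}, so as to show that the ``parity discrepancy'' $\delta(Z):=(-1)^{\rk\Jac_Z}\,w(\Jac_Z/K)\in\{\pm1\}$ of a curve $Z/K$ is unchanged by each reduction step and equals $1$ at the base case; for an arbitrary $X/K$ (we may assume $\Jac_X\ne0$) this gives $\delta(X)=1$, which is exactly the asserted identity.

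\emph{Step 1 (the $S_n$ step).} Realise $X$ as a degree-$n$ cover $X\to\P^1$ whose Galois closure $\widetilde X$ has Galois group $S_n$ over $K$, so $X=\widetilde X/S_{n-1}$ and $\P^1=\widetilde X/S_n$; such a cover exists because $K$ is infinite. Since every irreducible representation of $S_n$ is self-dual, $\Omega^1(\Jac_{\widetilde X})$ is self-dual, and Theorems~\ref{thm:introgaloisdescent}, \ref{LocalFormula:Intro}, Corollary~\ref{prop:brauerwishful} and Definition~\ref{def:root} all apply to $\widetilde X$ with $G=S_n$. By Theorem~\ref{thm:introsn} there are primes $p_i$, Brauer relations $\Theta_i$ in $S_n$ and a self-dual character $\sigma$ with
\[\rho=n\triv-\det\rho+2\sigma+\sum_i\tau_{\Theta_i,p_i}\qquad\text{in }R_\C(S_n),\]
$\rho$ the standard $(n{-}1)$-dimensional character. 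Pairing with the $\C[S_n]$-module $\Jac_{\widetilde X}(K)\otimes\C$ and reading off $\langle\triv,\cdot\rangle=\rk\Jac_{\P^1}=0$, $\langle\rho,\cdot\rangle=\rk\Jac_X$ and $\langle\det\rho,\cdot\rangle=\langle\epsilon,\cdot\rangle=\rk\Jac_{\widetilde X/A_n}$ from Theorem~\ref{thm:introgaloisdescent}(1) ($\epsilon=\det\rho$ the sign character), while using the $G$-equivariant isomorphism $\cX_{p_i}(\Jac_{\widetilde X})_\C\cong\Jac_{\widetilde X}(K)\otimes\C$ forced by finiteness of $\Sha$, one gets
\[(-1)^{\rk\Jac_X}=(-1)^{\rk\Jac_{\widetilde X/A_n}}\prod_i(-1)^{\langle\tau_{\Theta_i,p_i},\,\cX_{p_i}(\Jac_{\widetilde X})_\C\rangle}.\]
On the other hand, by Definition~\ref{def:root} and additivity of global $\epsilon$-factors, $\lambda\mapsto w(\widetilde X^{\lambda}/K)$ extends to a homomorphism $R_\C(S_n)\to\C^\times$, valued in $\{\pm1\}$ on self-dual classes; here $\Hom_{S_n}(\triv,H^1_\ell(\widetilde X))=0$ so $w(\widetilde X^\triv/K)=1$; $\Hom_{S_n}(\rho,H^1_\ell(\widetilde X))=H^1_\ell(X)$ and $\Hom_{S_n}(\det\rho,H^1_\ell(\widetilde X))=H^1_\ell(\widetilde X/A_n)$, so $w(\widetilde X^\rho/K)=w(\Jac_X/K)$ and $w(\widetilde X^{\det\rho}/K)=w(\Jac_{\widetilde X/A_n}/K)$; and $w(\widetilde X^{2\sigma}/K)=w(\widetilde X^{\sigma}/K)^2=1$. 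Substituting Corollary~\ref{prop:brauerwishful} for each $(\widetilde X,\Theta_i,p_i)$ yields
\[w(\Jac_X/K)=w(\Jac_{\widetilde X/A_n}/K)\prod_i(-1)^{\langle\tau_{\Theta_i,p_i},\,\cX_{p_i}(\Jac_{\widetilde X})_\C\rangle}.\]
Multiplying the two displays and cancelling the squared product gives $\delta(X)=\delta(\widetilde X/A_n)$.

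\emph{Step 2 (iterate down to the $C_2\times C_2$ base case).} The quotient $\widetilde X/A_n$ has a degree-$2$ map to $\P^1$, hence a model $y^2=h(x)$; put $m:=\deg h$ (if $\Jac_{\widetilde X/A_n}=0$ then $\delta(\widetilde X/A_n)=1$ trivially). Repeating Step~1 for $\widetilde X/A_n$, now via the degree-$m$ cover $(x,y)\mapsto y$ and the identity of Theorem~\ref{thm:introsn} restricted to the Galois group $G'\le S_m$ of its Galois closure (legitimate since Brauer relations and regulator constants restrict to subgroups, and restrictions of Brauer relations are Brauer relations), shows --- just as in the outline in \S\ref{Sec_intro} --- that $\delta(\widetilde X/A_n)=\delta(W)$, where $W$ is the ``discriminant curve'' $\Delta^2=g(y^2)$ with $g(y^2)\in K[y]$ the discriminant in $x$ of $h(x)-y^2$ (or $\delta(\widetilde X/A_n)=1$ already, when $G'\le A_m$). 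The curve $W$ has the shape $w^2=g(z^2)$, so it carries the faithful $C_2\times C_2$-action of Example~\ref{ex:C2C2rankrecipe}, and all $C_2\times C_2$-representations are self-dual. For this base case take $G=C_2\times C_2$ and $\Psi$ the Brauer relation of Examples~\ref{ex:tautable} and~\ref{ex:C2C2rankrecipe}: then $\tau_{\Psi,2}$ is the regular representation, so $\Hom_{C_2\times C_2}(\tau_{\Psi,2},H^1_\ell(W))=H^1_\ell(W)$, whence $w(W^{\tau_{\Psi,2}}/K)=w(\Jac_W/K)$, while (finiteness of $\Sha$ again) $\langle\tau_{\Psi,2},\cX_2(\Jac_W)_\C\rangle=\dim_\C\cX_2(\Jac_W)_\C=\rk\Jac_W$. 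Corollary~\ref{prop:brauerwishful} applied to $(W,\Psi,2)$ gives $w(\Jac_W/K)=(-1)^{\rk\Jac_W}$, i.e.\ $\delta(W)=1$. Chaining, $\delta(X)=\delta(\widetilde X/A_n)=\delta(W)=1$.

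\emph{Main obstacle.} The genuine work beyond the machinery already in place is the ``$\rho$-decomposition'' of root numbers used in Steps~1 and~2: that $\lambda\mapsto w(\widetilde X^{\lambda}/K)$ is a well-defined homomorphism on $R_\C(S_n)$ compatible with the identifications $\Hom_G(\Ind_H^G\triv,H^1_\ell(\widetilde X))\cong H^1_\ell(\widetilde X/H)$ --- equivalently, additivity and inductivity of $\epsilon$-factors, cancellation of the Langlands $\lambda$-constants on degree-$0$ virtual representations, independence of $\ell$ and of the auxiliary Haar measure and additive character, and correctness of the Archimedean contribution of Definition~\ref{def:root} (where the self-duality hypothesis on $\Omega^1$ makes the exponent an integer) --- all of which the excerpt deliberately leaves unaddressed. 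One must also check at each recursive stage that the pertinent $\Omega^1$ is self-dual over the group in play (immediate for $S_n$ and $C_2\times C_2$; for $G'$ e.g.\ when $K$ has a real place), and that $\Sha$-finiteness really does force the $G$-equivariant isomorphism $\cX_p(\Jac_{\widetilde X})_\C\cong\Jac_{\widetilde X}(K)\otimes\C$. Granted these, everything else is the bookkeeping already packaged in Theorems~\ref{thm:introsn}, \ref{thm:introgaloisdescent}, \ref{LocalFormula:Intro} and Corollary~\ref{prop:brauerwishful}.
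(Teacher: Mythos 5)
Your overall strategy is the one the paper itself uses: run the two--stage discriminant reduction, at each stage pairing the Selmer--parity formula (Theorem \ref{LocalFormula:Intro}, under finiteness of $\sha$) with its root--number counterpart (Conjecture \ref{conj:brauerwishful} via Corollary \ref{prop:brauerwishful} and Proposition \ref{prop:artinformalism}), and close with the $C_2\times C_2$ base case; this is exactly how Theorem \ref{thm:PCanalogue} is proved and the corollary deduced. Skipping the twist step relating $\Delta_Y$ to $\Delta^1_Y$ (Lemma \ref{lem:delta2parity}) is harmless for the corollary, since by Lemma \ref{lem: DeltaYforGivenCover} the discriminant curve of your second cover carries the $C_2\times C_2$-action whatever the constant in front of $g$ is; in the paper that step only serves to make the local terms $w'$ canonical.

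The genuine divergence, and where the gaps lie, is your insistence on honest Galois closures. First, Step 1 asserts without proof that every curve admits a degree-$n$ map to $\P^1$ whose closure has Galois group exactly $S_n$; this is a nontrivial monodromy statement, and it is unavailable for the disconnected (or non-geometrically-connected) curves that the paper's conventions allow and that inevitably appear at later stages of the recursion. Second, in Step 2 you restrict the $S_m$-identity of Theorem \ref{thm:introsn} to the actual Galois group $G'\le S_m$. This needs (a) that $\Omega^1(\Jac_Z)$ be self-dual as a $G'$-representation --- a standing hypothesis of Theorem \ref{LocalFormula:Intro} and of Conjecture \ref{conj:brauerwishful} --- which is automatic for $S_m$ and $C_2\times C_2$ but not for an arbitrary $G'$ when $K$ is totally imaginary (you flag this yourself but leave it unresolved, so your chain genuinely breaks there); and (b) that $\Res_{G'}\tau_{\Theta_i,p_i}$ is a legitimate choice of $\tau_{\Res_{G'}\Theta_i,\,p_i}$, which you assert but do not prove (it does follow from Frobenius reciprocity together with the induction/restriction compatibility of regulator constants, \cite[Theorem 2.56]{tamroot}, but it is a real step). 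The paper avoids all three issues at once by never leaving the symmetric group: instead of the Galois closure it uses the $S_n$- (resp.\ $S_m$-) closure of the cover (Definitions \ref{def: Sn-closure}, \ref{def:snclosurecurve}), a possibly disconnected quasi-nice curve that always carries a full $S_n$-action, so that no monodromy-maximality is needed, self-duality of $\Omega^1$ is automatic, and Theorem \ref{thm:snparity} together with Lemmas \ref{lem:delta2parity} and \ref{lem:deltadeltaparity} apply verbatim. Your argument becomes correct essentially as written once you replace your closures by these $S_n$-closures; as it stands, points (a) and the Step 1 existence claim are genuine gaps.
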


Conjecture \ref{conj:brauerwishful} thus gives an approach to the parity conjecture for general Jacobians. For example, if for curves over {\em local} fields $\mathcal K$ we always had the identity $w(X^{\tau_{\Theta,p}}/\mathcal K)=(-1)^{\ord_p  \Lambda_{\Theta}(X/\mathcal K)}$, then, assuming the finiteness of $\sha$, we would deduce the parity conjecture for all Jacobians over number fields. 
Unfortunately, the identity is false in general, and the correct relation between $ w(X^{\tau_{\Theta,p}}/\mathcal K)$ and $\Lambda_{\Theta}(X/\mathcal K)$ is yet to be found. 
However, we understand it well enough to prove the parity conjecture for elliptic curves (see Theorem \ref{thm:introEPC}), as well as Conjecture \ref{conj:brauerwishful} for general semistable curves when $p$ is odd:

\begin{theorem}[forthcoming work \cite{morganlattice}] \label{thm:termcompat}
Conjecture \ref{conj:brauerwishful} holds for all semistable curves $X$ and odd primes $p$, such that $\Jac_X$ has good ordinary reduction at all primes above $p$. Moreover, if $\sha(\Jac_X)$ is finite, then for every finite group $G$ of $K$-automorphisms of $X$ and Brauer relation $\Theta$ for $G$,
$$
  w(X^{\tau_{\Theta,p}})=(-1)^{\langle \tau_{\Theta,p},\Jac_X(K)_\C\rangle},
$$
so that Conjecture \ref{conj:wishful_intro}(2) holds for $X^{\tau_{\Theta,p}}$.
\end{theorem}

Let us illustrate the above ideas with an extended example, and sketch the proof of Theorem \ref{thm:introjacobian} in the setting of elliptic curves.

\begin{example}\label{ex:S3part1} 
Let $E$ be an elliptic curve over a number field $K$ given by $E: y^2 = x^3+ax+b$ with $a\neq 0$. 
We view $E$ as a degree 3 cover of $\mathbb P^1$ (with parameter $y$). 
The condition $a\neq 0$ ensures that the discriminant $h(y)=\text{Disc}(x^3+ax+b-y^2)=-27y^4 + 54by^2 - (4a^3 + 27b^2)$ has no repeated roots. In particular, $K(y,x)/K(y)$ is non-Galois and its Galois closure is an $S_3$-extension of $K(y)$. It has the following field diagram, with the corresponding covers of curves given on the right.

\begin{figure}[h!]
\begin{center}
\begin{tikzpicture}
    \node (Q1) at (0,0) {$K(y)$};
    \node (Q2) at (1.5,1.2) {$K(y, \Delta)$};
    \node (Q3) at (-1.5,1.8) {$K(y,x)$};
    \node (Q4) at (0,3) {$K(y,x,\Delta)$};
    \draw (Q4)--(Q2);
    \draw (Q4)--(Q3);
    \draw (Q3)--(Q1);
    \draw (Q2)--(Q1);
    \node (Q5) at (6,0) {$\P^1$};
    \node (Q6) at (7.5,1.2) {$D$};
    \node (Q7) at (4.5,1.8) {$E$};
    \node (Q8) at (6,3) {$B$};
    \draw (Q8)--(Q6);
    \draw (Q8)--(Q7);
    \draw (Q7)--(Q5);
    \draw (Q6)--(Q5);
\end{tikzpicture} 
\end{center}
\end{figure}

\noindent Here
$D$ and $B$ are the 
curves\footnote{when giving a curve by affine equations, we always mean the unique smooth projective curve birational to it}
given by $\Delta^2 = h(y)$ and by $\{y^2 = f(x),\;\Delta^2 = h(y)\}$. The Jacobian of $D$ is the elliptic curve denoted $E'$ in the statement of Theorem \ref{thm:introEPC} (cf. \cite[\S4]{Cre01}).

Now, $S_3$ acts on $B$ by automorphisms, and hence on the $K$-rational points of its Jacobian. 

Consider the decomposition $\Jac_B(K)_\C=\triv^{\oplus n}\oplus\epsilon^{\oplus m}\oplus \rho^{\oplus k}$ into irreducible representations, where $\triv $, $\epsilon$, $\rho$ denote the trivial, sign and 2-dimensional irreducible representation of $S_3$. 
Galois descent (\ref{eq:rationalpts}) for $G = S_3$ tells us that $n=\rk\Jac_{\P^1}=0$. 
Repeating with $G = C_2$ and $C_3$ gives $\rk E= k$ and $\rk\Jac_D = m$, respectively. In particular, we see that $\rk\Jac_B = 2\rk E + \rk\Jac_D$.

Assuming the Birch and Swinnerton-Dyer conjecture, we can also deduce this from decomposing $L$-functions. Indeed, by Artin formalism (\ref{eq:Linductivity}),
\begin{equation*}\label{eq:Lindentity}
L(B,s)L(\mathbb P^1,s)^2 = L(B^{\mathds 1 \oplus \mathds 1 \oplus\mathds 1 \oplus\epsilon\oplus\rho\oplus \rho},s)  = L(E,s)^2L(D,s),\tag{$*$}
\end{equation*} 
and taking $s\to 1$ recovers $\rk\Jac_B = 2\rk E + \rk\Jac_D$.
It is also easy to verify Conjecture \ref{conj:wishful_intro}(1): for example, $\ord_{s=1}L(B^{\rho},s)=\ord_{s=1}\frac{L(E,s)}{L(\P^1,s)} = k = \langle \rho, \Jac_B(K)_\C\rangle$.

Note that the identity ($*$) also indicates that $B$ has genus $3$ (comparing degrees of Euler factors) and that $E\times E\times \Jac_D$ is isogenous to $\Jac_B$ (by Faltings' theorem). 

The Birch--Swinnerton-Dyer conjecture is compatible with isogenies (Cassels--Tate \cite{cassels,tateanalog}), so, considering the leading terms of the $L$-functions at $s=1$, we obtain a corresponding identity of the form
$$
 \frac{\Reg_{E}^2\Reg_{\Jac_D}}{\Reg_{\Jac_B}} = \frac{|\sha({\Jac B})|}{|\sha(E)|^2|\sha({\Jac D})|} \frac{|E(K)_{\text{tors}}|^4|\Jac_D(K)_{\text{tors}}|^2}{|\Jac_B(K)_{\text{tors}}|^2} \cdot \prod_v \text{(local  terms)},
$$
where the local terms account for the periods, local Tamagawa numbers and other local ``fudge factors'' in the BSD formula. A computation with heights shows that the ratio of regulators is of the form $3^{n+m+k}\cdot (\textup{square})$, so the above expression  leads to
$3^{n+m+k} =   \prod_v \Lambda_v\cdot (\textup{square})$,
for a suitable (purely local) term $\Lambda_v$ involving invariants of $E/K_v$, $D/K_v$ and $B/K_v$.
Finally, taking $3$-adic valuations gives a formula for the parity of the sum of the  ranks of $E$ and $\Jac_D$ in terms of local data:
$$
 \rk E + \rk \Jac_D \equiv \sum_v \ord_3 \Lambda_v \mod 2.
$$

Theorem \ref{LocalFormula:Intro} automates this process. Formula ($*$) came from the representation-theoretic identity $\Ind_{\{1\}}^{G}\triv \oplus \triv^{\oplus 2}\simeq (\Ind_{C_2}^G\triv)^{\oplus 2}\oplus \Ind_{C_3}^G\triv$, which corresponds to the Brauer relation $\Theta = 2C_2 + C_3 - 2S_3 - \{1\}$. The theorem immediately tells of the existence of an isogeny $\Jac_B\to E\times E\times\Jac_D$ and the $L$-function identity ($*$). Moreover, this Brauer relation has $\tau_{\Theta,3}=\triv\oplus\epsilon\oplus\rho$ (see e.g. Example \ref{rem:tautable} or \cite[Ex. 2.53]{tamroot}), so, assuming the finiteness of $\sha(\Jac_B)$, Theorem \ref{LocalFormula:Intro}(2) together with Galois descent ($\dagger$) give
$$
 \rk E + \rk \Jac_D = 
  \langle \tau_{\Theta,3}, \Jac_B(K)_\C\rangle \equiv
 \sum_{v} \ord_3 \Lambda_\Theta(B/K_v) \mod 2.
$$
\end{example}

\begin{example}
\label{ex:C2C2rankrecipe}
Consider a hyperelliptic curve of the form $X:w^2 = g(z^2)$ over a number field~$K$. It has a natural action of $C_2\times C_2$, which gives the following diagrams of function fields 
and of the corresponding covers of curves.
\begin{figure}[h!]
\centering\begin{tikzpicture}
    \node (Q1) at (0,0) {$K(z^2)$};
    \node (Q2) at (0,1.2) {$K(z)$};
    \node (Q3) at (-2, 1.2) {$K(z^2,wz)$};
    \node (Q4) at (0, 2.4) {$K(w,z)$};
	\node (Q5) at (2, 1.2) {$K(z^2, w)$};
	    \draw (Q4)--(Q3);
	    \draw (Q4)--(Q5);
    \draw (Q4)--(Q2);
    \draw (Q2)--(Q1);
    \draw (Q3)--(Q1);
    \draw (Q5)--(Q1);
        \draw (Q5)--(Q1);
            \draw (Q5)--(Q1);
    \node (Q6) at (6,0) {$\P^1$};
    \node (Q7) at (6,1.2) {$\P^1$};
    \node (Q8) at (4, 1.2) {$X_1$};
    \node (Q9) at (6, 2.4) {$X$};
	\node (Q10) at (8, 1.2) {$X_2$};
	    \draw (Q9)--(Q8);
	    \draw (Q9)--(Q10);
    \draw (Q9)--(Q7);
    \draw (Q7)--(Q6);
    \draw (Q8)--(Q6);
    \draw (Q10)--(Q6);
        \draw (Q10)--(Q6);
            \draw (Q10)--(Q6);
\end{tikzpicture}\end{figure}

\noindent The curves $X_1$ and $X_2$ are explicitly given by $t^2=rg(r)$ and by $s^2=g(r)$, respectively.

The group $C_2\times C_2$ has the Brauer relation $\Psi=C_2^a + C_2^b + C_2^c - 2C_2\times C_2 - \{1\}$, which has $\tau_{\Psi,2} = \Ind_{\{1\}}^{C_2\times C_2} {\triv}$; here the $C_2^{i}$ are the three subgroups of order two.
Theorem~ \ref{LocalFormula:Intro}(1) tells us that there is a $K$-isogeny $
\Jac_X\to \Jac_{X_1}\times\Jac_{X_2}.
$
(When $g$ is a square-free cubic polynomial, this recovers the classical fact that the Jacobian of $w^2 = g(z^2)$ is isogenous to a product of two elliptic curves.)
Assuming finiteness of $\Sha(\Jac_X)$,
Theorem \ref{LocalFormula:Intro}(2) tells us
\begin{equation*}
  \rk \Jac_{X} = \langle \tau_{\Psi,2}, \Jac_X(K)_\C\rangle \equiv \sum_{v \textup{ place of }K}\ord_2\Lambda_\Psi(X/K_v) \mod 2.
\end{equation*}
In other words, we can control the parity of the rank of the Jacobians of such curves.
\end{example}

Combining Examples \ref{ex:S3part1}  and \ref{ex:C2C2rankrecipe} (for the curve $D$) we immediately deduce the following expression for the parity of ranks of elliptic curves. 

\begin{theorem}\label{thm:recipe} 
Let $E$ be an elliptic curve over a number field $K$ given by $E: y^2 = f(x)= x^3+ax+b$ with $a\neq 0$. 
Let  $g(y^2)\in K[y]$ be the discriminant of $f(x)-y^2$, and define curves $D:\Delta^2 = g(y^2)$ and $B: \{y^2 = f(x),\;\Delta^2 = g(y^2)\}$. 
If $\Sha(E)$ and $\Sha(\Jac_D)$ are finite then
 \begin{equation*}\rk E\equiv\sum_{v \textup{ place of }K}\big(\ord_3\Lambda_\Theta(B/K_v)+\ord_2\Lambda_\Psi(D/K_v)\big)\mod 2,\end{equation*}
where $\Theta$ and $\Psi$ are the Brauer relations from Examples \ref{ex:S3part1} and \ref{ex:C2C2rankrecipe} for the groups $S_3$ and $C_2\!\times\! C_2$ acting on $B$ and $D$, respectively. 
\end{theorem}

In order to prove the parity conjecture for elliptic curves (Theorem \ref{thm:introEPC})  we need to compare the local terms $\Lambda_\Theta$ and $\Lambda_\Psi$ to the corresponding local root numbers. We will, in fact, prove Conjecture \ref{conj:brauerwishful} for the two Brauer relations $\Theta$ and $\Psi$ (Theorem \ref{thm:3PC} \& Remark \ref{rem:psicompat}). Theorem~\ref{thm:introEPC} will then follow from a mild strengthening of Theorem \ref{thm:recipe}.

\begin{remark}\label{Instances_of_isogenies}

As stated, Theorem \ref{LocalFormula:Intro}(1) is a result of Kani and Rosen (see \cite[Theorem 3]{MR1000113}). We will generalise the notion of Brauer relations to “pseudo Brauer relations” (Definition \ref{def:pseudo Brauer_relations}), which also give rise to isogenies and can be used to obtain data on parities of ranks. 

Brauer relations appear to be a very rich source of isogenies. 
One can verify 
that isogenies in all of the following cases can be obtained by applying Theorem \ref{LocalFormula:Intro}(1) to suitable pseudo Brauer relations \cite{Alexpaper} (with certain assumptions in (3)--(5)):
\begin{enumerate}
\item Isogenies between elliptic curves,
\item Isogenies $\Jac_X \times \Jac_{X_d} \to \Res_{K(\sqrt{d})/K} \Jac_X$,
where $X$ is a hyperelliptic curve, $X_d$ is its quadratic twist by $d\in K^\times$, and $\Res_{K(\sqrt{d})/K}$ denotes Weil restriction from $K(\sqrt{d})$ to $K$,
\item Richelot isogenies between Jacobians of genus $2$ curves, 
\item Isogenies from Jacobians of genus 2 curves to products of elliptic curves,
\item 
Isogenies $\Jac_X \to \Jac_Y\times \Jac_Z$, where $X$ is a curve that admits an unramified double cover to a trigonal curve $Y$, and $\Jac_Z$ is the associated Prym,
\item Isogenies between products of Weil restrictions, $\prod_i \Res_{F^{H_i}/K}\Jac_Y \to \prod_i \Res_{F^{H_j'}/K}\Jac_Y$, where $Y$ is a curve over $K$, $F/K$ is a Galois extension and $\sum H_i- \sum H_j'$ is a Brauer relation in its Galois group (see \cite[Proof of Theorem 2.3]{MR2680426}).
\end{enumerate}

Isogenies have been extensively used to derive formulae for the parities of various ranks in terms of local data, including all of those listed above:
\begin{enumerate}
\item Cassels (see \cite[Appendix]{MR2167089}),
\item Kramer for $X$ elliptic \cite{MR597871} and Morgan for $X$ hyperelliptic \cite{morgan},
\item Dokchitser--Maistret \cite{DM2019},
\item Coates--Fukaya--Kato--Sujatha for $p\neq2$ \cite{CFKS} and Green--Maistret for $p=2$ \cite{HollyCeline},
\item Docking \cite{docking20212},
\item Mazur--Rubin for dihedral groups \cite{MazurRubin} and Dokchitser--Dokchitser \cite{MR2680426}.
\end{enumerate}
Theorem \ref{LocalFormula:Intro}(2) provides the means for deriving local formulae for all of these rank expressions in a uniform way \cite{Alexpaper}. 
\end{remark}

\subsection{Overview of the paper} \label{ss:overview}

The core of this paper relies on the study of Galois covers of curves. One subtlety is that we adhere to the following convention:

\begin{convention}  \label{convention_curves}
Throughout this paper, curves are assumed to be  smooth and proper, but are not assumed to be connected, nor are their connected components assumed to be geometrically connected.
\end{convention}

The reason for considering this broader notion of a `curve' is that they arise naturally in the context of Galois covers. For example, considering Example \ref{ex:S3part1} with $ a = 0 $ and $ K = \mathbb{Q} $, the discriminant curve coincides with the normalization of $ D: \Delta^2 = -27(y^2 - b)^2 $, which is not geometrically connected. After base changing to $ \mathbb{Q}_7 $, $ D $ fails to be connected. For a thorough treatment of the arithmetic of curves and Jacobians in the context of Convention \ref{convention_curves}, see \cite{Etale_paper}. 

In \S \ref{sec:motivic}, we construct the $L$-functions $L(X^{\tau}, s)$ and local root numbers $w(X^{\tau})$ and show that they satisfy the Artin formalism \eqref{eq:Linductivity} mentioned above (Proposition \ref{prop:artinformalism}). These are constructed from the 
$\ell$-adic representations $\Hom_G(\tau\otimes_{\overline\Q}\overline{\Q}_\ell, H_\ell^1(X))$ where $H_{\ell}^1(X)=H^1_{\acute{e}t}((\Jac_{X})_{\overline \cK}, \overline \Q_{\ell})$. We show that these form a compatible system of $\ell$-adic representations and possess other desired properties (Theorem \ref{thm:indep_l} \& Corollary \ref{cor:indep_l}). We conclude by showing that Conjecture \ref{conj:wishful_intro} follows from the Birch--Swinnerton-Dyer conjecture and other standard conjectures on $L$-functions (Conjecture~\ref{conj:wishful} \& Theorem \ref{thm:wishful}). 

\S \ref{Sec_rep_theory}--\S \ref{sec:Rank_Parity} are devoted to proving Theorem \ref{LocalFormula:Intro}(2) for Selmer groups.

In \S\ref{Sec_rep_theory}, we introduce pseudo Brauer relations, a generalisation of Brauer relations, and we verify that the theory of regulator constants remains applicable in this setting (Theorem \ref{Theorem: Independence of pairing}). 

In \S \ref{sec:isogenies}, we discuss isogenies arising from pseudo Brauer relations (as in Theorem \ref{LocalFormula:Intro}(1)).

In \S \ref{sec:Rank_Parity}, we prove Theorem \ref{LocalFormula:Intro}(2) and its generalisation to Selmer groups (Theorem \ref{Theorem: Local Formula}). In particular, we define $\Lambda_{\Theta}(X/\mathcal K)$, an explicit invariant associated to curves over local fields $\mathcal{K}$ of characteristic $0$ and pseudo Brauer relations for their automorphism groups (Definition \ref{def:lemma:local_invariants}). We also discuss variations of Theorem \ref{Theorem: Local Formula} which use alternative local invariants (Theorems \ref{Theorem: Local_Formula_dependent_on_differnetials} \& \ref{thm:lambdatilde}).

\S \ref{Sec_ellcurves}--\S \ref{Sec_ss_parity} focus on applications of these results to the parity conjecture.

In \S \ref{Sec_ellcurves}, we prove Theorem \ref{thm:introEPC} via the $3$-parity conjecture for $E\times \textup{Jac}_D$ (Theorem \ref{thm:3PC}) and the $2$-parity conjecture for elliptic curves admitting a $2$-isogeny.

In \S \ref{Sec_applications}, we prove that there are enough Brauer relations and representations of the form $\tau_{\Theta, p}$ to 
control the parity of the rank of arbitrary Jacobians, and to deduce Theorem \ref{thm:introjacobian}.

In \S \ref{Sec_ss_parity}, we describe an approach to the parity conjecture for arbitrary Jacobians from Theorem~\ref{thm:introjacobian}. This relies on a conjectural relationship between the local invariant $\Lambda_{\Theta}(X/K_v)$ and the local root number $w(X^{\tau_{\Theta,p}}/K_v)$ (Conjecture \ref{conj:brauerwishful}). We end by presenting some evidence towards this conjecture (Theorem \ref{thm:compatibility} \& Corollary \ref{cor:mostgeneralPC}).

\subsection*{Acknowledgements} We would like to thank Alex Bartel for many helpful discussions and suggestions.  The first author was partially supported by a Royal Society University Research Fellowship. The second author was supported by University College London and the EPSRC studentship grant EP/R513143/1 during part of this work. The final author is supported by the Engineering and Physical Sciences Research Council (EPSRC) grant EP/V006541/1 ‘Selmer groups, Arithmetic Statistics and Parity Conjectures’; parts of this work were aditionally carried out when they were supported by the Leibniz fellowship programme at the Mathematisches Forschungsinstitut Oberwolfach.

\subsection{Notation } \label{General_notation} Throughout this paper, we adhere to the following notation. We write $K$ for a number field and $v$ for a finite place. We write $L$ for any field and $\mathcal K$ for a local field, usually of characteristic 0.

\medskip
\begin{tabular}{p{0.135\textwidth}p{0.81\textwidth}}
$X$ & a curve (see Convention \ref{convention_curves})\\
$\JX$ & the Jacobian variety of $X$ \\
$X/H$ & quotient of $X$ defined over $L$ by a finite group $H\leq \Aut_L(X)$\\
$A$ & an abelian variety \\
$\mathcal{X}_{p}(A)$ & $\text{Hom}_{\mathbb{Z}_{p}}(\varinjlim \text{Sel}_{p^n}(A), \mathbb{Q}_{p}/\mathbb{Z}_{p})\otimes \mathbb{Q}_{p}$, the dual $p^{\infty}$-Selmer group of $A$\\
$\rk_p A$ & the $p^{\infty}$-Selmer rank of $A$, that is the $\Z_p$-corank of $\mathcal{X}_{p}(A)$\\
$w(A)$ & the global root number of $A$ defined over number field\\
$w(A/\mathcal K)$ & the local root number of $A$ defined over a local field $\mathcal K$\\
$V_\ell(A)$& $T_\ell(A)\otimes \mathbb{Q}_\ell$, where $T_\ell(A)$ is the $\ell$-adic Tate module of $A$\\
$\Omega^{1}(A)$ & the $L$-vector space of regular differentials on $A$ defined over a field $L$\\
$c(A)$ & the Tamagawa number of $A$ defined over a non-archimedean local field\\
$|\cdot|_{\mathcal K}$, $|\cdot|_{v}$ & the normalised absolute value on $\mathcal K$ (resp. $K_v$), extended to $\overline{\mathcal K}$ (resp. $\overline{K_v}$)\\
$\Frob_\cK$ & a (choice of) arithmetic Frobenius element in the absolute Galois group of $\cK$ \\
$I_\cK$ & inertia group of $\cK$ \\
$\langle\cdot,\cdot\rangle$ & the inner product of characters of $G$-representations \\
$\rho^*$, $\rho^H$ & the dual (resp. $H$-invariant vectors, for $H\leq G$) of a $G$-representation $\rho$ \\
$\Theta $ & a (pseudo) Brauer relation, see Definition \ref{def:pseudo Brauer_relations}
\\
$\mathcal{C}_{\Theta}(\mathcal{V})$ & regulator constant for $\mathcal{V}$, see Definition \ref{def:regulator_constants_for_pseudo_brauer}\\
$\mathcal C_{\Theta}^{\mathcal{B}_1, \mathcal{B}_2}(\mathcal{V})$ & $\mathcal C_{\Theta}(\mathcal{V})$ computed with respect to the bases $\mathcal{B}_{1}$, $\mathcal{B}_{2}$, see Definition \ref{def:regulator_constants_for_pseudo_brauer}\\
$\tau_{\Theta, p}$& self-dual $\mathbb{C}[G]$-representation encoding regulator constants, see Definition \ref{Def: p-adically nontrivial reps}
 \\
$\Lambda_{\Theta}(X/{\mathcal{K}})$ & an explicit local invariant of $X$, see Definition \ref{def:lemma:local_invariants}\\
 $D_{2n}$& the dihedral group of order $2n$\\
  $\mathds 1$& the trivial representation
\end{tabular}\medskip

We remind the reader that the parity conjecture has the following analogue for Selmer groups.

\begin{conjecture}[The $p$-parity conjecture] Let $A$ be an abelian variety over a number field $K$ and $p$ a prime. Then
$$(-1)^{\rk_pA} = w(A).$$
\end{conjecture}

We will frequently use the following result regarding the action of automorphisms of a curve $X$ on its Jacobian, including the ``Galois descent'' \eqref{eq:rationalpts} mentioned above.

\begin{theorem}
\label{thm:introgaloisdescent}
Let $X/K$ be a curve over a field of characteristic 0, let $G$ be a finite subgroup of $\textup{Aut}_K(X)$. Then, 
\begin{enumerate} 
    \item $V_\ell(\Jac_X)^G \simeq V_\ell(\Jac_{X/G})$,
    \item $\Omega^1(\Jac_X)^G \simeq \Omega^1(\Jac_{X/G})$.
\end{enumerate}
When $K$ is a number field,
\begin{enumerate} 
  \item[(3)] $(\Jac_X(K)\otimes\Q)^G \simeq \Jac_{X/G}(K)\otimes\Q$, 
  \item[(4)] $\cX_p(\Jac_X)^G \simeq \cX_p(\Jac_{X/G})$,
\end{enumerate}
and moreover,
\begin{enumerate}
  \item[(5)] $\cX_p(\JX)$ is self-dual as a $G$-representation,
  \item[(6)] if $\Sha(\Jac_X)[p^\infty]$ is finite then $\Sha(\Jac_{X/G})[p^\infty]$ is finite,
    \item[(7)] if a representation $\rho$ does not appear in the $\ell$-adic Tate module $V_\ell(\JX)$ then it does not appear in rational points or in $p^\infty$-Selmer groups: 
$$
\langle \rho,V_\ell(\JX) \rangle =0 \quad\implies\quad
\langle\rho, \JX(K)\otimes\Q \rangle =\langle\rho, \cX_p(\JX) \rangle=0. 
$$
Similarly, if $\langle \rho,V_\ell(\JX) \rangle =0$ then  $\langle\rho, \Omega^{1}(\JX)\rangle=0$ for general characteristic $0$ fields.
\end{enumerate}

\end{theorem}

\begin{proof} This is proved in \cite{Etale_paper}: for (1), (3), (4) see Theorem 1.3, for (2) see Remark 4.29 with $F = \Omega^1(-)$, for (5) see Theorem 1.2, for (6) see Theorem 5.2(3) and for (7) see Proposition~1.4 and Remark 5.6.
\end{proof} 

\section{Motivic pieces of curves}\label{sec:motivic}

In the introduction we factorised the $L$-function of a curve $X$ with a group of automorphisms $G$ as a product of the $L$-function ``pieces'' $L(X^\tau,s)$. 
Decompositions of this kind were already mentioned by Serre \cite{SerreZeta} as coming from ``Artin $L$-functions in the setting of schemes''.
In the present section we formalise this construction and justify the generalisation of the Birch--Swinnerton-Dyer conjecture for these $L$-functions given in Conjecture \ref{conj:wishful_intro}. The reader who is willing to take the formulation of the latter conjecture on trust (and its Selmer group analogue given in Conjecture \ref{conj:wishful}) can fairly safely skip this section; the only other results that will be used later are the existence and basic properties of the  local and global root numbers $w(X^\tau/\cK)$ and $w(X^\tau)$ (Propositions \ref{prop:artinformalism}, \ref{prop:rootnumber}) in \S\S\ref{Sec_applications}--\ref{Sec_ss_parity}.

Formally, $X^\tau$ should be constructed as a motive --- it comes from the 1\textsuperscript{st} cohomology groups of the variety $X$ and the idempotent corresponding to $\tau$ in $\C[G]$. We will not set up the entire motivic machinery here and instead will just concentrate of the system of $\ell$-adic representations attached to $X^\tau$.
We will show that these are independent of $\ell$ and form a compatible system and satisfy other desired properties (Theorem \ref{thm:indep_l}, Corollary \ref{cor:indep_l}).

Throughout this section we will phrase everything in terms of principally polarised abelian varieties $A$ with an action of a finite group $G$ by automorphisms. The case of interest for the rest of the paper is $A=\Jac_X$ for a curve $X/K$ with $G$ coming from $K$-automorphisms of $X$. Thus we will simply write
$$
 L(X^\tau,s) = L(A^{\tau},s) \qquad \text{and} \qquad w(X^\tau)=w(A^\tau).
$$
While much of what follows remains true without assuming the existence of a principal polarisation, several statements are cleaner in the presence of this assumption. Since our main application is to Jacobians of curves, we elect to work in the principally polarised setting throughout.

In this section we will adopt the following conventions and notation.

\medskip

\noindent {\bf{\em{Convention.}}}
For the purpose of working with $\ell$-adic representations, we fix embeddings $\overline{\Q}\subset \overline{\Q}_\ell \subset\C$ for all $\ell$ (with the resulting embedding $\overline{\Q}\subset \C$ independent of $\ell$).

\begin{notation}
When a prime $\ell$ is fixed, we write $\chicyc$ for the $\ell$-adic cyclotomic character.
\end{notation}

\begin{notation}
We take all $d$-dimensional representations $\tau$ of finite groups to be valued in $\GL_d(\overline{\Q})$, and implicitly extend scalars to $\GL_d(\overline{\Q}_\ell)$ or $\GL_d(\C)$ whenever necessary.

For a finite group $G$ and a $G$-representation $\tau$, we write $\Q(\tau)$ for the (abelian) number field generated by the values of the character of $\tau$, i.e. by $\Tr \tau(g)$ for all $g\in G$. For $\alpha\in\Gal(\Q(\tau)/\Q)$ we write $\tau^\alpha$ for the representation that is $\alpha$-conjugate to $\tau$, that is $\Tr \tau^\alpha(g) = \alpha(\Tr\tau(g))$ for all $g\in G$.
\end{notation}

\noindent {\bf{\em{Convention.}}}
By an automorphism $\sigma$ of a principally polarised abelian variety, we always mean one that respects the polarisation. That is, such that $\sigma^\dagger \circ  \sigma=1$, where $\dagger$ denotes the Rosati involution.

\begin{definition}
For an 
abelian variety $A$ over a field $L$ we write
$$
H_{\ell}^1(A)= (T_\ell(A)\otimes_{\Z_\ell} \overline \Q_{\ell})^*
$$
for its associated $\ell$-adic Galois representation. 

If a finite group $G$ acts on $A$ by $L$-automorphisms, then $H_{\ell}^1(A)$ carries an induced action of $G$ that commutes with the action of $\Gal(\overline{L}/L)$. For a representation $\tau$ of $G$, we define the $\ell$-adic Galois representation
$$
H_{\ell}^1(A^\tau)= \Hom_G(\tau, H_\ell^1(A)).
$$
We caution the reader that this notation depends on the underlying group $G$, e.g. $H^1_\ell(A^{\mathds{1}})$ is the subspace of $G$-invariants of $H^1_\ell(A)$.
\end{definition}

The following abelian variety $A_G$ plays the role of the (Jacobian of the) quotient curve $X/G$ in \S\ref{Sec_intro}.

\begin{definition}\label{def:ag}
Let $A$ be an abelian variety over a field $L$ and let $G$ be a finite group acting on $A$ by $L$-automorphisms. We write
$$
A_G= \text{connected component of identity of } A^G, \text{ where } A^G=\bigcap_{g\in G}\ker\big(g-1\big).
$$
$A_G$ is an abelian subvariety of $A$, and $A^G$ is a subgroup variety.
\end{definition}

\begin{remark}\label{rmk:agGD}
The abelian variety $A_G$ satisfies the analogue of Galois descent in \S\ref{Sec_intro}($\dagger$):
$$(A(L)\otimes \mathbb{Q})^G\iso A_G(L)\otimes \mathbb{Q}.$$
Indeed, the natural inclusion $A_G\rightarrow A^G$ induces an injection $A_G(L)\rightarrow A^G(L)=A(L)^G$, whose cokernel is annihilated by the order of the component group of $A^G$.
\end{remark}

\begin{remark}[see \cite{Etale_paper}, \S\S3,4] 
Let $X$ be a curve over a field $L$ of characteristic~$0$.
The Jacobian $A\!=\!\Jac_X$ carries a canonical principal polarisation. 
Moreover, if $G$ is a finite group of $L$-automorphisms of $X$, then $G$ acts naturally (but possibly not faithfully) on $\Jac_X$ by $L$-automorphisms. 
In this setting, $\Jac_{X/G}$ is isogenous to $(\Jac_X)_G$, 
and so they can be used interchangeably in many arithmetic situations.
\end{remark}

\subsection{Galois representation of $A^\tau$}

Before discussing the $\ell$-adic representation for $A^\tau$ we first record the following  property of $H^1_\ell(A_G)$, analogous to Theorem \ref{thm:introgaloisdescent}.

\begin{proposition}\label{prop:quotientavrep}
Let $A$ be a principally polarised abelian variety over a field $L$ 
and $G$ a finite group acting on $A$ by $L$-automorphisms. 
For $H\le G$,
$$
 H^1_\ell(A_H) \simeq H^1_\ell(A)^H
$$
as $\Gal(\overline{L}/L)$-representations, and as $G/H \times \Gal(\overline{L}/L)$-representations if $H$ is normal in $G$.
\end{proposition}

\begin{proof}
Write $N_H=\sum_{h\in H}h$, which we view as an endomorphism of $A$. Note that $N_H(A)\subseteq A_H$. Denoting by $i_H$ the inclusion of $A_H$ into $A$,  we thus have maps 
$N_H:A\rightarrow A_H$ and $i_H:A_H \rightarrow A.$
Noting that the image of $N_H^*:H^1_{\ell}(A_H)\rightarrow H^1_\ell(A)$ is contained in $H^1_{\ell}(A)^H$,  we obtain maps 
$$
N_H^*:H^1_{\ell}(A_H)\rightarrow H^1_\ell(A)^H\quad \textup{ and }\quad i_H^*:H^1_\ell(A)^H\rightarrow H^1_{\ell}(A_H).
$$
The composition $N_H\circ i_H$ is multiplication by $|H|$ on $A_H$, hence $i_H^*\circ N_H^*$ is multiplication by $|H|$ on $H^1_{\ell}(A_H)$. Similarly,  $ N_H^*\circ i_H^* $ is multiplication   by $|H|$ on $H^1_{\ell}(A)^H$. 
We conclude that both $N_H^*$ and $i_H^*$ are  isomorphisms as in the statement.
\end{proof}

\begin{proposition}\label{prop:artinh1}
Let $A$ be a principally polarised 
abelian variety over a field $L$ of characteristic 0, let $G$ be a finite group acting on $A$ by $L$-automorphisms, and let $\ell$ be a prime.
There are isomorphisms of $\Gal(\overline{L}/L)$-representations as follows:
\begin{enumerate}[leftmargin=*]
\item For $G$-representations $\tau, \tau'$, $H^1_\ell(A^{\tau\oplus\tau'})\simeq H^1_\ell(A^\tau)\oplus H^1_\ell(A^{\tau'})$;
\item If $H\le G$ and $\rho$ is a representation of $H$, then $H^1_\ell(A^{\Ind_H^G\rho})\simeq H^1_\ell(A^\rho)$;
\item If $\sigma$ factors through $G/N$ for $N\triangleleft G$, then $H^1_\ell(A^\sigma)\simeq H^1_\ell((A_N)^\sigma)$;
\item For $H\le G$, $H^1_\ell(A^{\Ind_H^G\mathds{1}}) \simeq H^1_\ell(A_H)$;
\item There is a non-degenerate $\Gal(\overline{L}/L)$-equivariant bilinear pairing $$H^1_\ell(A^\tau)\times H^1_\ell(A^{\tau^*}) \to \chicyc^*.$$ If $\tau$ is self-dual and orthogonal, this pairing is alternating and $\dim H^1_\ell(A^\tau)$ is even;
\item $H_{\ell}^1(A^\tau)^*\simeq H_{\ell}^1(A^{\tau^{*}})\otimes_{\overline\Q_\ell} \chicyc$; 
\item  If $\tau$ is self-dual, then $(\det H_{\ell}^1(A^\tau))^{\otimes 2} \simeq \chicyc^{*\otimes\dim H_{\ell}^1(A^\tau)}$. If $\tau$ is orthogonal, then $\det H_{\ell}^1(A^\tau) \simeq \chicyc^{*\otimes\frac{1}{2}\dim H_{\ell}^1(A^\tau)}$.
\end{enumerate}
\end{proposition}

\begin{proof} 
(1) Follows from the fact that $\textup{Hom}_G$ commutes with finite direct sums in the first variable. 

(2) Since induction is left adjoint to restriction, for any $G$-representation $V$ there is an isomorphism 
$$
\Hom_G(\Ind_H^G\rho, V)\iso \Hom_H(\rho, V)
$$
which is functorial in $V$. Taking $V=H^1_\ell(X)$ gives the result. 

(3) By Proposition \ref{prop:quotientavrep} $H^1_\ell(A_N)\simeq H^1_\ell(A)^N$. Since $\sigma$ factors through $G/N$, the natural map
$$
\Hom_G\big(\sigma,H^1_\ell(A)^N\big) \rightarrow \Hom_G\big(\sigma,H^1_\ell(A)\big)
$$
is an isomorphism, from which the claim follows.

(4) By (2) and Proposition \ref{prop:quotientavrep} , $H^1_\ell(A^{\Ind_H^G\mathds{1}}) \simeq H^1_\ell(A)^H \simeq H^1_\ell(A_H)$.

(5) It suffices to construct a $G$-invariant pairing
\begin{equation*} \label{eq:pairing_without_G_inv}
  \textup{Hom}(\tau,H^1_\ell(A))\times \textup{Hom}(\tau^*,H^1_\ell(A)) \longrightarrow \chicyc^*   \tag{$*$}
\end{equation*}
with the same properties. Restricting this to $G$-invariants then gives the sought pairing on $H^1_\ell(A^\tau)$ (the restriction remains non-degenerate by \cite[Lemma 2.15]{tamroot}). Note that the standard isomorphisms of vector spaces  
\[\textup{Hom}(\tau,H^1_\ell(A))\iso H^1_\ell(A)\otimes \tau^*\quad \textup{ and }\quad \textup{Hom}(\tau^*,H^1_\ell(A))\iso H^1_\ell(A)\otimes \tau\]
 are $G\times \textup{Gal}(\overline{L}/L)$-equivariant,  where $G$ acts diagonally on the tensor products and $\textup{Gal}(\overline{L}/L)$ acts via the first factors. Having made these identifications, the tensor product of the Weil pairing on $H^1_\ell(A)$ with the natural `evaluation' pairing between $\tau$ and $\tau^*$ gives the pairing ($*$). 
That this is alternating when $\tau$ is self-dual and orthogonal follows from the fact that the tensor product of an antisymmetric pairing with a symmetric pairing is antisymmetric. 

(6) Follows from (5).

(7) The first claim follows from (6) on taking determinants. The second claim follows from the orthogonal case of (5) and the Pfaffian identity $\textup{pf}(MTM^t)=\det(M)\textup{pf}(T),$ where $T$ is an antisymmetric matrix and $M$ an arbitrary square matrix (see e.g. \cite[\S5.2, Proposition 1]{MR0107661}).
\end{proof}

\subsection{Local Galois representation of $A^\tau$}

We now turn to the properties of $H^1_\ell(A^\tau)$ over local fields. We begin by recalling some standard results on the $\ell$-adic representation $H^1_\ell(A)$ for semistable abelian varieties, except that we additionally keep track of the action of a finite group of automorphisms.

\begin{notation}\label{not:ssav}
Let $\cK$ be a non-archimedean local field with ring of integers $\mathcal{O}_\cK$ and residue field $k$.
Let $A/\cK$ be a principally polarised abelian variety, and let $G$ be a finite group acting on $A$.
Recall (e.g. from \cite[Definition 3.4]{grothendieck1971modeles}) that if $A$ is semistable, then by definition there is a short exact sequence of $k$-group schemes
$$
0\longrightarrow T \longrightarrow \mathcal{A}^0_k \longrightarrow B \longrightarrow 0,
$$
where $\mathcal{A}^0_k$ is the identity component of the special fibre of the  N\'{e}ron model $\mathcal{A}/\cO_\cK$ of $A$, $T$ is a torus and $B$ an abelian variety. 

We denote by $\mathfrak{X}_A$ the character lattice of $T$. 

The action of $G$ extends uniquely to $\mathcal{A}$, inducing actions on  $T$, $B$ and  $\mathfrak{X}_A$.
\end{notation}

\begin{proposition}\label{thm:grothendieckadam}
Continuing with the setup of Notation \ref{not:ssav}, let $\ell$ be a prime distinct from the characteristic of $k$.
\begin{enumerate}[leftmargin=*]
\item   We have 
$$
H^1_\ell(A)/H^1_\ell(A)^{I_\cK}\simeq (\mathfrak{X}_A\otimes_{\mathbb{Z}}\overline{\Q}_\ell)\otimes \chicyc^*
$$  
as $G\times \Gal(\bar{k}/k)$-modules.
\item Let $g=(g_0,\sigma)\in G\times W_\cK$, where $W_\cK$ is the Weil group of $\cK$. Then $\Tr (g|H^1_\ell(A)^{I_\cK})$ is a rational number independent of $\ell$. If $\sigma$ acts on $\overline{k}$ as a non-negative power of the geometric Frobenius, then $\Tr (g|H^1_\ell(A)^{I_\cK})\in \mathbb{Z}$.
\end{enumerate}
\end{proposition}

\begin{proof}  
The Weil pairing $T_\ell(A)\otimes T_\ell(A)\rightarrow \mathbb{Z}_\ell(1)$ induces an isomorphism  
\begin{equation*} \label{Weil_pairing_isom}
H^1_{\ell}(A)\iso T_\ell(A)(-1)\otimes_{\mathbb{Z}_{\ell}} \overline{\mathbb{Q}}_\ell \tag{$*$}
\end{equation*}
of $G\times \textup{Gal}(\overline{K}/K)$-modules, where the $-1$ denotes a Tate twist.
The  description of the $\ell$-adic Tate module of a semistable abelian variety given in \cite{grothendieck1971modeles} (see also \cite[Section 3]{papikian13} for a summary) is readily checked to be compatible with the action of $G$. In particular, we have a $G\times \textup{Gal}(\overline{K}/K)$-stable filtration 
$$
0\subseteq T_\ell(A)^t\subseteq T_\ell(A)^{I_\cK}\subseteq T_\ell(A),
$$
whose graded pieces are unramified and are, respectively, 
$$
\mathfrak{X}_*(T)\otimes \mathbb{Z}_\ell(1),\quad T_\ell(B)\quad \textup{ and }\quad \mathfrak{X}_A\otimes \mathbb{Z}_\ell.
$$
Here $\mathfrak{X}_*(T)\simeq \textup{Hom}(\mathfrak{X}_A,\mathbb{Z})$ is the cocharacter lattice of $T$. Twisting by $(-1)$ and using ($*$) we deduce part (1). 

For part (2), the above discussion gives
\begin{equation*} \label{eq:trace_sum}
\textup{Tr}\big(g\mid H^1_{\ell}(A)^{I_\cK}\big)=\textup{Tr}\big(g \mid T_\ell(B)(-1)\big)+\textup{Tr}\big(g \mid \mathfrak{X}_*(T)\big).
\end{equation*}
Consequently, it suffices to consider the trace of $g$ on $T_\ell(B)(-1)$. The Weil pairing on $B$ gives an isomorphism of $G\times W_\cK$-modules $T_\ell(B)(-1)\iso \textup{Hom}\big(T_\ell(B^\vee),\mathbb{Z}_\ell\big),$ where $\tau\in G$ acts on $T_\ell(B^\vee)$ as $(\tau^{-1})^\vee$. If $\sigma$ acts on $\bar{k}$ as a non-negative power of the geometric Frobenius, then $g^{-1}$ acts on $T_\ell(B^\vee)$ as a geometric endomorphism. Thus, by \cite[Proposition 12.9]{MR861974}, the characteristic polynomial of $g^{-1}$ on  $T_\ell(B^\vee)$ has integer coefficients independent of $\ell$. This implies the result. 
\end{proof}

\begin{proposition}\label{cor:twistedtrace}
Let $A$ be a principally polarised abelian variety over a non-archimedean local field~$\cK$, let $G$ be a finite group acting on $A$ by $\cK$-automorphisms, and let $\ell$ be a prime different from the residue characteristic of $\cK$.
\begin{enumerate}[leftmargin=*]
\item $$\Tr (\Frob_\cK \> | \> H_{\ell}^1(A^\tau)^{I_\cK}) = \frac{1}{|G|}\sum_{g\in G} \Tr (g^{-1}|\tau) \Tr (\Frob_\cK \cdot g|H^{1}_\ell(A)^{I_\cK}).$$
\item For $\alpha\in\Gal(\Q(\tau)/\Q)$ the characteristic polynomials of $\Frob_\cK$ on $H^1_\ell(A^\tau)^{I_\cK}$ and on  \linebreak $H^1_\ell(A^{\tau^\alpha})^{I_\cK}$ are $\alpha$-conjugate. In other words, if \hbox{$\det(t-\Frob_\cK | H_{\ell}^1(A^\tau)^{I_\cK}) = \sum_{i=0}^{n} a_i t^i$}, then \linebreak \hbox{$\det(t-\Frob_\cK | H_{\ell}^1(A^{\tau^\alpha})^{I_\cK}) = \sum_{i=0}^{n} \alpha(a_i) t^i$}.
\item If $A/\cK$ is semistable then $H^1_\ell(A^\tau)/H^1_\ell(A^\tau)^{I_\cK}\simeq \Hom_G(\tau, \dcg_A\otimes\overline{\Q}_\ell)\otimes\chicyc^{-1}$ as a $\Gal(\overline{\cK}/\cK)$-module.
\end{enumerate}
\end{proposition}

\begin{proof}

(1) For irreducible $\tau$ this follows from Lemma \ref{lem:traceVtau}(2) below, applied to $V=H^1_\ell(A)^{I_\cK}$, $H=\Gal(\overline{\cK}/\cK)$, $h=\Frob_\cK$, and observing that $H^1_\ell(A^\tau)^{I_\cK}\simeq \Hom_G(\tau,H^1_\ell(A)^{I_\cK})$ as $\Gal(\overline{\cK}/\cK)$-modules. The general case then follows from Proposition \ref{prop:artinh1}(1).

(2) The coefficients of the characteristic polynomial are symmetric functions in the eigenvalues of Frobenius, and hence can be expressed as universal $\Z$-linear combinations of $\Tr \Frob_\cK^i$ for $i=0, \ldots n$. It therefore suffices to show that $\Tr (\Frob_\cK^i | H^1_\ell(A^{\tau^\alpha})^{I_\cK})=\alpha \Tr(\Frob_\cK^i|H^1_\ell(A^{\tau})^{I_\cK})$. This follows from (1), since by definition $\alpha \Tr (g^{-1}|\tau) = \Tr (g^{-1}|\tau^{\alpha})$ and as $\Tr (\Frob_\cK \cdot g|H^{1}_\ell(A)^{I_\cK})\in\Q$ by Proposition \ref{thm:grothendieckadam}(2).

(3) This follows from Proposition \ref{thm:grothendieckadam}(1).
\end{proof}

\begin{lemma}\label{lem:traceVtau}
Let $V$ be a representation of $G \times H$ with $G$ finite. For a $G$-representation $\tau$ define $V^\tau=\Hom_G(\tau, V)$. 
Then, for an irreducible $G$-representation~$\tau$
\begin{enumerate}
\item $V^{(\tau^{\oplus \dim\tau})}$ is isomorphic to the $\tau$-isotypic component of $V$, and
\item for $h\in H$,
$$
\Tr (h \> | \> V^\tau) = \frac{1}{|G|}\sum_{g\in G} \Tr (g^{-1}|\tau) \Tr (h\cdot g|V).
$$
\end{enumerate}
\end{lemma}

\begin{proof}
(1) The map which sends $V$ to its $\tau$-isotypic component is isomorphic, as a functor, to $V \mapsto \tau \otimes \textup{Hom}(\tau,V)$. Since $H$ acts trivially on $\tau$, then $\tau \otimes \textup{Hom}(\tau,V) \iso V^{\tau^{\oplus \dim\tau}}$ as an $H$-representation. 

(2) $\frac{\dim\tau}{|G|}\sum \Tr(g^{-1}|\tau) g \in\C[G]$ acts as the projector to the $\tau$-isotypic component. \qedhere
\end{proof}

\begin{theorem}\label{thm:indep_l}
Let $A$ be a principally polarised abelian variety over a local field $\cK$ of characteristic 0 with finite residue field $\F_q$. Let $G$ be a finite group acting on $A$ by $\cK$-automorphisms, and let $\tau$ be a representation of $G$.
The local Weil--Deligne representation associated to $H^1_\ell(A^\tau)$ is independent of $\ell$, weight-monodromy compatible and Frobenius-semisimple; more precisely, it can be written in the form
$$
 \rho_1 \oplus (\rho_2\otimes Sp(2)),
$$
where $\rho_i$ are continuous complex representations of the Weil group that are independent of the choice of $\ell\nmid q$, Frobenius acts semisimply on the $\rho_i$ with eigenvalues of absolute value $|q|^{-1+\frac{i}{2}}$, and $Sp(2)$ denotes the 2-dimensional special representation.
\end{theorem}

\begin{proof}

It suffices to prove the result for irreducible $\tau$, as the general case then follows on taking direct sums (Proposition \ref{prop:artinh1}(1)).
As $A$ is an abelian variety, the Weil-Deligne representation associated to $H^1_\ell(A)$ admits as decomposition of the above form (see e.g. \cite{Sab} Proposition 1.10).
By Lemma \ref{lem:traceVtau}(1), $H^1_\ell(A^\tau)^{\oplus \dim\tau}$ is isomorphic to a direct summand of $H^1_\ell(A)$, and therefore also admits such a decomposition (which is also Frobenius-semisimple and weight-monodromy compatible, but may not be independent of $\ell$). Hence so does the Weil-Deligne representation associated to $H^1_\ell(A^\tau)$.

It remains to prove independence of $\ell\nmid q$.
For a fixed integer $d\ge 0$ and finite extension of the coefficient field $F/\Q_\ell$, \cite{DDWeil2} Theorem 7 and Corollary 8 show that there is a finite list of finite extensions $\mathcal{F}_i/\mathcal{K}$ with the property that
\begin{enumerate}
\item Every abelian variety $B/\mathcal{K}$ with $\dim B\le d$ has semistable reduction over each $\mathcal{F}_i$, and
\item Every Frobenius-semisimple weight-monodromy compatible continuous $\ell$-adic representation $\rho:\Gal(\overline{\mathcal{K}}/\mathcal{K})\to \GL_n(F)$ for $n\le 2d$ is uniquely determined by the set of traces $\Tr \rho^{I_{\mathcal{F}_i}}(\Frob_{\mathcal{F}_i})$ for all $i$.
\end{enumerate}
Taking $d=\dim A$ and $\mathcal{F}=\Q_\ell(\tau)$, we deduce that the local Weil--Deligne representation associated to $H^1_\ell(A^\tau)$ is determined by the traces of $\Frob_{\mathcal{F}_i}$ acting on $H^1_\ell(A^\tau)^{I_{\mathcal{F}_i}}$. These traces do not depend on the choice of $\ell$ by Proposition \ref{thm:grothendieckadam}(2) and Proposition \ref{cor:twistedtrace}(1).
\end{proof}

\begin{corollary}\label{cor:indep_l}
Let $A$ be a principally polarised abelian variety over a number field $K$, $G$ be a finite group acting on $A$ by $K$-automorphisms, and $\tau$ a representation of $G$.
Then $H^1_\ell(A^\tau)$ form a compatible system of $\ell$-adic representations, in the sense that for every prime $v$ of $K$, its associated local Weil--Deligne representation is independent of the choice of $\ell$ for $v\nmid \ell$.
\end{corollary}

\subsection{$L$-functions and root numbers of $A^\tau$}\label{ss:Lw}

We refer to Tate's \cite{TateNTB} and Deligne's \cite{DeligneValeurs} for the general definition of $L$-functions and $\epsilon$-factors.

\begin{definition}\label{def:twistedroot}
Let $A$ be a principally polarised abelian variety over a local field $\cK$, let $G$ be a finite group acting on $A$ by $\cK$-automorphisms, let $\tau$ be a representation of $G$ and $\ell$ a prime different from the residue characteristic of $\cK$.

When $\cK$ is non-archimedean the local polynomial is defined by the usual formula, 
$$
P(A^\tau/\cK,T)=\det (1- \Frob_\cK^{-1}T | H^1_\ell(A^\tau)^{I_\cK}).
$$ 
The local root number $w(A^\tau/\cK)$ is defined as the local root number associated to the Galois representation $H^1(A^\tau)$ via the theory of $\epsilon$-factors, 
$$
 w(A^\tau/\cK)=\frac{\epsilon(H^1_\ell(A^\tau), \mu,\psi)}{|\epsilon(H^1_\ell(A^\tau), \mu,\psi)|}
$$ 
for some choice of Haar measure $\mu$ and nontrivial additive character $\psi$ on $\cK$. 
In view of Theorem~\ref{thm:indep_l}, the choice of $\ell$ is immaterial, at least if $\cK$ has characteristic 0 (we will not discuss the case of equal characteristic here).

When $\cK$ is archimedean, the local $\epsilon$-factor (and the $\Gamma$-factor in the functional equation) is defined in terms of the corresponding Hodge structure (see \cite{DeligneValeurs} (5.3)).
We will only use that the Hodge structure for $A^\tau$ is non-zero only in degrees $(0,1)$ and $(1,0)$ and that the dimension satisfies
$$
\dim (H^{(1,0)}(A^\tau) \oplus H^{(0,1)}(A^\tau)) = \dim H^1_\ell(A^\tau).
$$
(Like the $\ell$-adic representation, the Hodge structure is constructed as a $\Hom$ from that of $\tau$ to that of $A$. The former is concentrated in degree $(0,0)$ where it is just the underlying vector space of the representation with its $G$-action. The latter, by the functoriality of the Hodge decomposition (see e.g.  \cite[Section 12]{Ara2012}), satisfies
$$
 H^{(1,0)}(A)\oplus H^{(0,1)}(A) \simeq H^1_\ell(A)
$$
as $G$-representations. The dimension formula follows on applying $\Hom_G(\tau, \bullet).$)

See Proposition \ref{prop:rootnumber} below for an explicit formula for the local root number when $A/K$ is semistable or $K$ is archimedean. 

\end{definition}

\begin{lemma}\label{lem:windep}
The local root number $w(A^\tau/\cK)$ does not depend on the choice of measure $\mu$.
If $\tau$ is orthogonal then $w(A^\tau/\cK)$ does not depend on the choice of additive character $\psi$ and, moreover, $w(A^\tau/\cK)\in \{\pm 1\}$. 
\end{lemma}
\begin{proof}
Independence of $\mu$ is standard and applies to local root numbers generally, as scaling the measure merely scales the $\epsilon$-factor by a positive real number; see \cite{TateNTB} (3.4.3) for the case of Weil representations and (4.1.6) for the general case. Independence of $\psi$ and that the local root number is $\pm 1$ generally holds for representations with positive determinant by \cite{TateNTB} (3.4.4) and (4.1.6), and by \cite{DeligneValeurs} (5.5.1), respectively. This applies to $H^1_\ell(A^\tau)$ by Proposition \ref{prop:artinh1}(5).
\end{proof}

\begin{definition} \label{def:twistedLfn}
Let $A$ be a principally polarised abelian variety over a number field $K$, let $G$ be a finite group acting on $A$ by $K$-automorphisms and $\tau$ a representation of $G$.
The $L$-function and global root number are defined as usual by
$$
L(A^\tau,s) = \prod_{v\nmid \infty} \frac{1}{P(A^\tau/K_v, q_v^{-s})}, \qquad\qquad w(A^{\tau})=\prod_v w(A^\tau/K_v),
$$
where $q_v$ is the size of the residue field at $v$, and the two products are taken over all the non-archimedean places of $K$ and over all the places of $K$, respectively.
(In the present article we will be interested in the representations $\tau_{\Theta,p}$, which are always orthogonal. 
We will thus not discuss to what extent the root number is well-defined for general $\tau$.)
\end{definition}

\begin{remark}
The roots of $P(A^\tau/K_v,T)$ are a subset of those of the local polynomial for $A/K_v$. A standard argument then shows that the $L$-series for $A^\tau$ converges on $\Re(s)>\frac{3}{2}$.
\end{remark}

\begin{lemma}\label{lem:galoisequivariance}
Let $A$ be a principally polarised abelian variety over a number field $K$ and let $G$ be a finite group acting on $A$ by $K$-automorphisms. For a representation $\tau$ of $G$ the coefficients of the $L$-series 
$$
L(A^\tau,s)=\sum_{0\neq\mathbf{n}\triangleleft \mathcal{O}_K} \frac{a_\mathbf{n}}{N_{K/\Q}(n)^s}
$$ 
have $a_{\mathbf{n}}\in \Q(\tau)$. 
Moreover, for $\alpha\in\Gal(\Q(\tau)/\Q)$ the $L$-series for $A^{\tau^\alpha}$ is given by 
$$
L(A^{\tau^\alpha},s)=\sum_{0\neq\mathbf{n}\triangleleft \mathcal{O}_K} \frac{\alpha(a_\mathbf{n})}{N_{K/\Q}(n)^s}.
$$
\end{lemma}

\begin{proof}
This is a direct consequence of Proposition \ref{cor:twistedtrace}(3), which shows the corresponing Galois equivariance property for each Euler factor.
\end{proof}

Standard conjectures on $L$-functions and Deligne's conjecture on the ``Galois equivariance'' properties of $L$-functions (see \cite{DeligneValeurs} \S5.2 and Conjecture 2.7), together with the above Lemma, imply the following conjecture. We will not make the terms $a,b$ explicit as they will not be used in this paper; see \cite{TateNTB} (3.4.7), (4.1.6), (4.2.4) and \cite{DeligneValeurs} 5.3 for formulae relating them to the associated condutor and the appropriate powers of $2$ and $\pi$.

\begin{conjecture}\label{conj:twistedlfunctions}
Let $A$ be a principally polarised abelian variety over a number field $K$ and let $G$ be a finite group acting on $A$ by $K$-automorphisms. For a representation $\tau$ of $G$
\begin{enumerate}[leftmargin=*]
\item $L(A^\tau,s)$ has an analytic continuation to $\C$;

\item $L(A^\tau,s)$ satisfies 
$$
\mathbf{L}(A^\tau,s) = w(A^\tau) \cdot (ab^s)\cdot \mathbf{L}(A^{\tau^*},2-s),
$$
where 
$\mathbf{L}(A^\tau,s)=L(A^\tau,s)\Gamma(s)^{\frac{[K:\Q]}{2}\dim H^1_\ell(A^\tau)}$ and $a, b>0$ are constants that depend on $A^\tau$ and $K$, but not on $s$;
\item $\ord_{s=1} L(A^{\tau^\alpha},s)=\ord_{s=1} L(A^{\tau},s)$ for $\alpha\in\Gal(\Q(\tau)/\Q)$.
\end{enumerate}
\end{conjecture}

These $L$-functions and root numbers satisfy the usual ``Artin formalism''. This follows from Proposition \ref{prop:artinh1} and standard properties of $L$-functions and root numbers under direct sums and induced representations:

\begin{proposition}\label{prop:artinformalism}
Let $A$ be a principally polarised abelian variety over a number field $K$ and let $G$ be a finite group acting on $A$ by $K$-automorphisms.
\begin{enumerate}[leftmargin=*]
\item For $G$-representations $\tau, \tau'$,
$$
 L(A^{\tau\oplus\tau'}\!,s)=L(A^{\tau}\!,s)L(A^{\tau'}\!,s) \quad\text{ and }\quad w(A^{\tau\oplus\tau'})=w(A^{\tau}) w(A^{\tau'}).
$$
\item If $H\le G$ and $\rho$ is a representation of $H$,
$$
 L(A^{\Ind_H^G\rho},s)=L(A^{\rho},s) \quad \text{ and }\quad w(A^{\Ind_H^G\rho})=w(A^{\rho}).
$$
\item If $\tau$ is a representation of $G$ that factors through $G/N$ for $N\triangleleft G$ then 
$$
 L(A^\tau,s)=L(A_N^\tau) \quad\text{ and }\quad w(A^\tau)=w(A_N^\tau).
$$
\item If $H\le G$ then
$$
 L(A^{\Ind_H^G\mathds{1}},s)=L(A_H,s) \quad \text{ and }\quad w(A^{\Ind_H^G\mathds{1}})=w(A_H).
$$
\end{enumerate}
\end{proposition}

Finally, we record the following explicit formula for $w(A^\tau/\cK)$ in the case when $\cK$ is archimedean or $A/\cK$ is semistable.

\begin{proposition}\label{prop:rootnumber}
Let $A$ be a principally polarised abelian variety over a local field $\cK$, $G$ a finite group acting on $A$ by $\cK$-automorphisms, and $\tau$ an orthogonal $G$-representation. 
\begin{enumerate}
\item If $\cK$ is archimedean, then 
$$
 w(A^\tau/\cK) = (-1)^{\frac{1}{2}\dim H^1_\ell(A^\tau)}.
$$
\item If $\cK$ is non-archimedean and $A/\cK$ is semistable, then
$$
 w(A^\tau/\cK) = (-1)^{\langle \tau, (\dcg_A\otimes\C)^{\Frob_\cK} \rangle}.
$$
In particular,  $w(A^\tau/\cK)=1$ if $A$ has good reduction.
\end{enumerate}
\end{proposition}

\begin{proof}
(1) This follows from Definition \ref{def:twistedroot} and \cite{DeligneValeurs}
$\mathparagraph$5.3.

(2)
By Proposition \ref{cor:twistedtrace}(3),
$$
 H^1(A^\tau)/H^1(A^\tau)^{I_\cK}\simeq \Hom_G(\tau, \dcg_A\otimes\overline\Q_\ell)\otimes\chicyc^{-1}.
$$
By \cite{TateNTB} (4.2.4), the root number of an $\ell$-adic representation $V$ is related to that of its semisimplification 
$V_{ss}$ by $w(V/\cK)=w(V_{ss}/\cK) \frac{\sgn \det(-\Frob^{-1}_\cK|V_{ss}^{I_\cK})}{\sgn \det(-\Frob^{-1}_\cK|V^{I_\cK})} $.
Applying this to $H^1_\ell(A^\tau)$,
$$
 w(A^\tau/\cK) = w(H^1(A^\tau)_{ss}/\cK)
  \cdot \sgn \det(-\Frob^{-1}_\cK|  \Hom_G(\tau, \dcg_A\otimes\overline\Q_\ell)\otimes\chicyc^{-1} )  ).
$$
Both $H^1(A^\tau)^{I_\cK}$ and $H^1(A^\tau)/H^1(A^\tau)^{I_\cK}$ are unramified, hence so is $H^1(A^\tau)_{ss}$.
By \cite{TateNTB} (3.2.6.1), $ w(H^1(A^\tau)_{ss}/\cK)=1$ 
(here we choose the additive character $\psi$ to have $n_\psi=0$; by Lemma \ref{lem:windep} 
$w(A^\tau/\cK)$ is independent of this choice).
Thus
$$
 w(A^\tau/\cK) =  \det(-\Frob^{-1}_\cK|  \Hom_G(\tau, \dcg_A\otimes\overline\Q_\ell)).  
$$
As $\tau$ is orthogonal and $\dcg_A$ is a lattice, both can be realised over $\R$, and hence the eigenvalues of $\Frob_\cK^{-1}$ on $\Hom_G(\tau, \dcg_A\otimes\overline{\Q}_\ell)$ must be real or come in complex conjugate pairs. As Frobenius acts by an element of finite order on $\dcg_A$ and trivially on $\tau$, we deduce that
$$
 w(A^\tau/\cK) = 
  (-1)^{\dim \Hom_G(\tau,\dcg_A\otimes_\Z\overline\Q_\ell)^{\Frob_\cK}} =
  (-1)^{\langle \tau, (\dcg_A\otimes_\Z\C)^{\Frob_\cK} \rangle},
$$
as required.
\end{proof}

\subsection{Arithmetic conjectures}

We can now explain our analogues of the Shafarevich--Tate conjecture, the Birch--Swinnerton-Dyer conjectural rank formula, the parity conjecture and the $p$-parity conjecture for $A^\tau$.

\begin{conjecture}\label{conj:wishful}
Let $A$ be a principally polarised abelian variety over a number field $K$, let $G$ be a finite group acting on $A$ by $K$-automorphisms, and let $\tau$ be a representation of $G$. Then
\begin{enumerate}
\item For every prime $p$, $\langle \tau, \mathcal{X}_p(A/K)_\C\rangle = \langle \tau,  A(K)_\C \rangle$;
\item $\ord_{s=1}L(A^\tau,s) = \langle \tau,  A(K)_\C \rangle$;
\item If $\tau$ is self-dual, then $w(A^\tau)=(-1)^{\langle \tau,  A(K)_\C \rangle}$;
\item If $\tau$ is self-dual, then $w(A^\tau)=(-1)^{\langle \tau, \mathcal{X}_p (A/K)_\C \rangle}$.
\end{enumerate}
\end{conjecture}

\begin{theorem}\label{thm:wishful}
Let $A$ be a principally polarised abelian variety over a number field $K$ and let $G$ a finite group acting on $A$ by $K$-automorphisms. 
\begin{enumerate}[leftmargin=*]
\item Conjecture \ref{conj:wishful} (1) follows from the Shafarevich--Tate conjecture for $A/K$; 
\item Conjecture \ref{conj:wishful} (2) follows from Conjecture \ref{conj:twistedlfunctions} (1,3) for $L(A^\tau,s)$ for all representations $\tau$ of $G$ and from the Birch--Swinnerton-Dyer conjecture for $\rk A_H$ for all $H\le G$;
\item Conjecture \ref{conj:wishful} (3) follows from Conjecture \ref{conj:wishful} (2) and Conjecture \ref{conj:twistedlfunctions} (2);
\item Conjecture \ref{conj:wishful} (4) follows from Conjecture \ref{conj:wishful} (1) and (3).
\end{enumerate}
\end{theorem}

\begin{proof}
(1) 
The inclusion $A(K)\otimes_\Z\Q_p\to\mathcal{X}_p(A/K)^*$ is $G$-equivariant, by functorialilty of the connecting maps in Galois cohomology. Since $\mathcal{X}_p(A/K)$ is self-dual by Theorem \ref{thm:introgaloisdescent}(5), if $\sha(A)[p^\infty]$ is finite, then
$\mathcal{X}_p(A/K)_\C\simeq A(K)_\C$ as a $G$-module.

(2) If $\tau=\Ind_H^G\mathds{1}$ for some $H\le G$, then
$$
\ord_{s=1}L(A^\tau,s) = \ord_{s=1}L(A_H,s) = \rk A_H = \dim A(K)_\C^{H}= \langle \tau,  A(K)_\C \rangle,
$$
by Proposition \ref{prop:artinformalism}(iv), the Birch--Swinnerton-Dyer rank formula for $A_H$, Remark \ref{rmk:agGD} and Frobenius reciprocity.

If $\tau$ has rational character, then one can write $\tau^{\oplus n}\oplus\bigoplus_i \Ind_{H_i}^G\mathds{1} = \bigoplus_j \Ind_{H_j'}^G\mathds{1}$ for some $n\ge 1$ and subgroups $H_i, H_j' \le G$, as the Burnside ring has finite index in the rational representation ring (see e.g. \cite[Theorem 2.1.3]{SN94}). The result then follows from the previous case and multiplicativity of $L$-functions (Proposition  \ref{prop:artinformalism}(i)).

Finally, for general $\tau$ let $\rho=\oplus_{\alpha\in\Gal(\Q(\tau)/\Q)}\tau^{\alpha}$. As $A(K)_\C$ is a rational representation, $\langle \tau, A(K)_\C\rangle = \langle \tau^\alpha, A(K)_\C\rangle$ for all $\alpha\in\Gal(\Q(\tau)/\Q)$. The result now follows from Conjecture~\ref{conj:twistedlfunctions} 
and the previous case applied to $\rho$, which has rational character:
$$
\ord_{s=1}L(A^\tau,s) = \frac{\ord_{s=1} L(A^\rho,s)}{|\Gal(\Q(\tau)/\Q)|}  = \frac{\langle\rho, A(K)_\C \rangle}{|\Gal(\Q(\tau)/\Q)|} = \langle\tau, A(K)_\C \rangle.
$$

(3) Clear, since, by the functional equation, the parity of $\ord_{s=1}L(A^\tau,s)$ is determined by $w(A^\tau)$ whenever $\tau=\tau^*$.

(4) Clear.
\end{proof}

\section{Pseudo Brauer relations and regulator constants}\label{Sec_rep_theory}
\label{sec_rep_theory}

We now define pseudo Brauer relations and their regulator constants, extending the notion of regulator constants for Brauer relations considered in \cite{tamroot}.  Most key properties are retained in this expanded framework. Much of \S \ref{subsec_pseudo_brauer_relations}--\S\ref{subsec_tau_theta_p} will be familiar to readers experienced with these concepts.

Throughout this section, $\L$ is a field of characteristic $0$ with a fixed embedding $\L \hookrightarrow \mathbb{C}$, $G$ is a finite group, $\mathcal{H}$ is a set of representatives of the subgroups of $G$ up to conjugacy  and $\langle,\rangle$ is the standard inner product on characters. All representations are assumed finite dimensional.

\begin{subsection}{Pseudo Brauer relations} \label{subsec_pseudo_brauer_relations}

\begin{definition} \label{def:pseudo Brauer_relations}  
Let $\mathcal{V}$ be an $L[G]$-representation. 
An element $\Theta = \sum_{i} H_{i} - \sum_{j} H_{j}' \in \mathbb{Z}[\mathcal{H}]$ is a \textit{pseudo Brauer relation relative to $\mathcal{V}$} if there are $\mathbb{C}[G]$-representations $\rho_{1}$ and $\rho_{2}$, satisfying $\langle \rho_{1},  \mathcal{V}  \rangle  = \langle \rho_{2} , \mathcal{V} \rangle  =0$, such that
\begin{equation*}\label{eq:pseudo_brauer_equation_defining}
 \rho_1\oplus \bigoplus_{i} \mathbb{C}[G/H_{i}] \iso \rho_2\oplus \bigoplus_{j} \mathbb{C}[G/H_{j}'] .
 \end{equation*}
\end{definition}
\begin{remark} \label{remark_Brauer_relations} When $\mathcal{V}=L[G]$, we necessarily have $\rho_{1}=\rho_{2}=0$. In this case, Definition \ref{def:pseudo Brauer_relations} coincides with the existing notion of a Brauer relation. 
\end{remark}

\begin{remark}
The choice of representatives in $\mathcal{H}$ will be immaterial in practice (see Remark \ref{remark:representatives_for_vectorspaces} for a thorough discussion). When specific choices are required (for instance in \S \ref{Sec_ellcurves}), these will be explicitly stated.
\end{remark}

The set of all pseudo Brauer relations relative to  $\mathcal{V}$ forms an abelian subgroup of $\mathbb{Z}[\mathcal{H}]$. The following result describes the rank of this subgroup.

\begin{proposition}  
Let $\mathcal{V}$ be an $L[G]$ representation and let $\textup{Irr}_{\mathbb{Q}}(G)$ be the set of isomorphism classes of irreducible representations of $G$ over $\mathbb{Q}$. Then,$$\rk_\Z\textup{PBR}(\mathcal{V}) = \#\{\textup{conj. classes of non-cyclic }H\leq G \}+ \#\{\rho \in \textup{Irr}_{\mathbb{Q}}(G) : \langle \rho, {\mathcal{V}} \rangle =0\},$$
where $\textup{PBR}(\mathcal{V})\subseteq \mathbb{Z}[\mathcal{H}]$ denotes the subgroup of pseudo Brauer relations relative to $\mathcal{V}$.
 \end{proposition}
 
 \begin{proof}
Denote by $BR\subseteq \mathbb{Z}[\mathcal{H}]$ the subgroup of Brauer relations. It's well known that the rank of $BR$ is equal to the number of conjugacy classes of non-cyclic subgroups of $G$. Indeed, denoting by $R(G)$ the rational representation ring, we have a natural linear map $\alpha:\mathbb{Q}[\mathcal{H}]\rightarrow R(G)\otimes \mathbb{Q}$  sending $\sum_{i}n_iH_i$ to $\sum_i n_i \textup{Ind}_{H_{i}}^{G} \mathds{1}$. The kernel of $\alpha$ is $BR\otimes \mathbb{Q}$, the dimension of $R(G)\otimes \mathbb{Q}$ is equal to the number of conjugacy classes of cyclic subgroups of $G$ by \cite[\S 13.1, Cor. 1]{serre_scott_2014}, and $\alpha$ is surjective by the induction theorem \cite[\S 13.1, Theorem 30]{serre_scott_2014}.

The restriction of $\alpha$ to $PBR(\mathcal{V}) \otimes \mathbb{Q}$ is readily seen to have image contained in the subspace of $R(G)\otimes \mathbb{Q}$ spanned by irreducible rational representations $\rho$ with $\left \langle \rho,\mathcal{V}\right \rangle=0$. To complete the proof we wish to show that, conversely, any such $\rho$ lies in the image of $\alpha$. This, again, is a consequence of the induction theorem \cite[\S 13.1, Theorem 30]{serre_scott_2014}. 
\end{proof}

\subsection{Regulator constants}
\begin{notation}\label{pairing_1_2_notat}
 Let $\Theta = \sum_{i}H_{i} - \sum_{j} H_{j}'\in \mathbb{Z}[\mathcal{H}]$ be a pseudo Brauer relation relative to a self-dual $\L[G]$-representation $\mathcal{V}$. Given a non-degenerate, $G$-invariant,  $\L$-bilinear pairing   $\Langle,\Rangle$  on  $\mathcal{V}$,   we denote by $\langle ,\rangle_{1}$ the  pairing
\[\langle ,\rangle_{1}=\bigoplus_i \frac{1}{|H_i|}\Langle,\Rangle\quad \textup{on the vector space}\quad\bigoplus_{i} \mathcal{V}^{H_{i}},\]
and define the pairing $\left \langle,\right \rangle_2$ on $\bigoplus_{j} \mathcal{V}^{H_{j}'}$ similarly.  Given a basis $\mathcal{B}=\{v_i\}$  for $\bigoplus_{i} \mathcal{V}^{H_{i}}$, we  denote by $\langle \mathcal{B},\mathcal{B}\rangle_1$ the matrix  with $(i,j)$th entry $\langle v_{i},v_{j} \rangle_1 $, and define  $\langle \mathcal{B}',\mathcal{B}'\rangle_2$ for a basis $\mathcal{B}'$ of  $\bigoplus_{j} \mathcal{V}^{H_{j}'}$ similarly. By \cite[Lemma 2.15]{tamroot}, both $\langle ,\rangle_{1}$ and $\langle ,\rangle_{2}$ are non-degenerate.
\end{notation}

\begin{definition} \label{def:regulator_constants_for_pseudo_brauer}
Let $\Theta=\sum_{i} H_{i} - \sum_{j} H_{j}' \in \mathbb{Z}[\mathcal{H}]$  be a pseudo Brauer relation relative to a self-dual $\L[G]$-representation $\mathcal{V}$, and let $\Langle,\Rangle$ be a non-degenerate, $G$-invariant,    $\L$-bilinear pairing on   $\mathcal{V}$ taking values in some field extension $\L'$ of $\L$.  Given bases $\mathcal{B}$ for $\bigoplus_{i} \mathcal{V}^{H_{i}}$ and $\mathcal{B}'$ for $\bigoplus_{j} \mathcal{V}^{H_{j}'}$, we define
\[ \mathcal{C}^{\mathcal{B},\mathcal{B}'}_{\Theta}(\mathcal{V}) =  \frac{ \text{det}\langle \mathcal{B},\mathcal{B} \rangle_{1}}{\text{det} \langle \mathcal{B}',\mathcal{B}' \rangle_{2}}\in \L'^\times. \] 

 We then define the \textit{regulator constant of $\mathcal{V}$ relative to $\Theta$}, denoted $\mathcal{C}_{\Theta}(\mathcal{V}) $, to be the class of  $\mathcal{C}^{\mathcal{B},\mathcal{B}'}_{\Theta}(\mathcal{V}) $ in $\L'^{\times}/ \L^{\times 2}$ for any choice of bases $\mathcal{B}$, $\mathcal{B}'$ (the result being independent of this choice).
\end{definition}

Many properties of regulator constants associated to Brauer relations \cite[\S 2.ii]{tamroot} continue to hold for the  pseudo Brauer relations of  Definition \ref{def:regulator_constants_for_pseudo_brauer}. Specifically, we have the following:

\begin{theorem}
\label{Theorem: Independence of pairing}
Let $\L$ be a field of characteristic $0$, $G$ be a finite group and $\mathcal{V}, \mathcal{V}_1, \mathcal{V}_2$ be finite dimensional self-dual $\L[G]$-representations. Then,
\begin{enumerate}
\item  given a pseudo Brauer relation  $\Theta$ relative to $\mathcal{V}$, $\mathcal{C}_{\Theta}(\mathcal{V})$ is independent of the choice of pairing $\Langle,\Rangle$, and  takes values in  $\L^{\times}/\L^{\times 2} $,
\item given pseudo Brauer relations $\Theta_1$ and $\Theta_2$ relative to $\mathcal{V}$, we have
\[  \mathcal{C}_{\Theta_1+\Theta_2}(\mathcal{V}) = \mathcal{C}_{\Theta_1}(\mathcal{V})\, \mathcal{C}_{\Theta_2}(\mathcal{V}),\]
\item  if $\Theta$ is a pseudo Brauer relation relative to both $\mathcal{V}_1$ and $\mathcal{V}_2$, then 
\begin{equation*}
\mathcal{C}_{\Theta}( \mathcal{V}_{1}\oplus \mathcal{V}_{2}) =
 \mathcal{C}_{\Theta}(\mathcal{V}_{1}) \mathcal{C}_{\Theta}(\mathcal{V}_{2}).
 \end{equation*}
  In particular, if  $\mathcal{V} \iso \bigoplus_{i} \mathcal{V}_{i}^{n_i}$ is a decomposition into self-dual $L[G]$-representations, then
\[ \mathcal{C}_{\Theta}(\mathcal{V}) = \prod_{i} \mathcal{C}_{\Theta}(\mathcal{V}_{i})^{n_i}. \]
\end{enumerate} 
\end{theorem}

We will prove Theorem \ref{Theorem: Independence of pairing} after introducing  an alternative description of regulator constants in \S \ref{sec:alt_desc}. We note  that an analogue of (1) also holds for $\mathcal C_{\Theta}^{\mathcal{B},\mathcal{B}'}(\mathcal{V})$, see Remark \ref{rem:reg_const_bases_indep}.

\begin{remark}
\label{remark_descend_representations}
We note that $\mathcal{C}_{\Theta}$ is compatible with extension of scalars: if $K/L$ is a field extension, then $\mathcal{C}_\Theta(\mathcal{V}) = \mathcal C_\Theta(\mathcal{V} \otimes_L K)$ in $K^\times/K^{\times 2}$. On the other hand, if $\mathcal{V}$ descends to a $K[G]$-representation $\mathcal{W}$ for a subfield $K\subseteq L$, then $\mathcal{W}$ is unique up to $K[G]$-isomorphism. In particular, we can associate to $\mathcal{V}$  a well-defined regulator constant   $\mathcal{C}_{\Theta}(\mathcal{W}) \in K^\times/K^{\times 2}$. We will often omit $\mathcal{W}$ from the notation  and simply view $\mathcal{C}_\Theta(\mathcal{V})$ as an element of $K^\times/K^{\times 2}$ without comment. 
\end{remark}

Theorem \ref{Theorem: Independence of pairing} represents a generalisation of \cite[Theorem 2.17 \& Corollary 2.18]{tamroot}. In addition, the following result generalises \cite[Corollary 2.25 \& Lemma 2.26]{tamroot}.

\begin{lemma} \label{lem:symplectic_rep_regulator_constant}
Let $\Theta = \sum_{i} H_{i} - \sum_{j} H_{j}'$ be a pseudo Brauer relation relative to  $\mathcal{V}$. If either
\begin{enumerate}
\item $\mathcal{V}$ is symplectic, or
\item $\langle \mathcal{V}, L[G/H_i] \rangle = \langle \mathcal{V}, L[G/H_j'] \rangle = 0$ for all $i, j$,
\end{enumerate}
then $\mathcal{C}_{\Theta}(\mathcal{V}) \equiv 1 \textup{ mod } L^{\times 2}$.
\end{lemma}
\begin{proof}
See \cite[Corollary 2.25, Lemma 2.26]{tamroot}.
\end{proof}
We use this to show that $\mathcal{C}_{\Theta}(\mathcal{V})$ can be taken to be a positive real number.
\begin{lemma} \label{reg_constant_is_positive}
Let $\Theta$ be a pseudo Brauer relation relative to a self-dual $\L[G]$-representation $\mathcal{V}$. Then, there exists a positive real number in the same class as $\mathcal{C}_{\Theta}(\mathcal{V})$ in $\L^{\times}/\L^{\times 2}$. (Recall that we fixed an embedding $L\hookrightarrow \mathbb{C}$ at the start of the section.)
\end{lemma}
\begin{proof}
Write $\mathcal{V}\iso \psi_1 \oplus \psi_2$ where $\psi_1$ (resp. $\psi_2$) is an orthogonal (resp. symplectic) respresentation.
Then $\mathcal{C}_{\Theta}(\mathcal{V})=\mathcal{C}_{\Theta}(\psi_1) \mathcal{C}_{\Theta}(\psi_2)= \mathcal{C}_{\Theta}(\psi_1)$ by Theorem \ref{Theorem: Independence of pairing}(3) and Lemma \ref{lem:symplectic_rep_regulator_constant}. Now $\psi_1$ is realisable over $\mathbb{R}$, say on a real vector space $\mathcal{W}$, which necessarily admits a positive definite $G$-invariant pairing (take the average, over $g\in G$, of any inner product). The associated pairings $\langle,\rangle_1, \langle,\rangle_2$ (as in Notation \ref{pairing_1_2_notat}) are then positive definite. Computing $\mathcal{C}_{\Theta}(\mathcal{W})$ with respect to these pairings we obtain a positive real number. By Remark \ref{remark_descend_representations}, this gives the result. 
\end{proof}

\subsection{The representations $\tau_{\Theta,p}$} \label{subsection_tau_theta_p}
\label{subsec_tau_theta_p}
We now introduce a special class of representations $\tau_{\Theta,p}$ in the case $L=\mathbb{Q}_{p}$, which feature throughout the paper.

\begin{definition}
\label{Def: p-adically nontrivial reps}
Let $\Theta$ be a pseudo Brauer relation relative to a self-dual $\mathbb{Q}_{p}[G]$-representation~$\mathcal{V}$. 
Consider the set $\mathcal{R}_{\mathcal{V}}$ of all self-dual $\mathbb{Q}_{p}[G]$-representations $\tau$ all of whose irreducible constituents appear in $\mathcal{V}$, that is $\tau$ for which $\langle \rho,\mathcal{V}\rangle =0$ implies $\langle \rho,\tau\rangle =0$ for all $\mathbb{Q}_{p}[G]$-representations $\rho$. 
We define $\tau_{\Theta,p}$ to be (any choice of) self-dual $\mathbb{C}[G]$-representation, all of whose complex irreducible constituents are orthogonal, that satisfies  
$$
\langle \tau_{\Theta,p},\rho\rangle  \equiv \ord_{p} \mathcal{C}_{\Theta}(\rho) \mod 2, \qquad\qquad \forall \rho\in \mathcal{R}_{\mathcal{V}}.
$$ 
(For the purposes of this paper, the choice of $\tau_{\Theta,p}$ will be immaterial.)
\end{definition} 

\begin{remark} 
A choice for $\tau_{\Theta,p}$ always exists by Lemma \ref{lem:symplectic_rep_regulator_constant}(1). For instance, let $\{\tau_i\}_i$ be the set of all self-dual and $\mathbb{Q}_p$-irreducible representations of $G$ with $\ord_{p} \cC_{\Theta}(\tau_i)$ odd. Then we can take
\[ \tau_{\Theta,p}=\bigoplus_{i} \big{(} \textup{any }\mathbb{C}\textup{-irreducible constituent of }\tau_i\big{)}.\]
In particular, when all  irreducible representations of $G$ are realisable over $\Q$, we can take
$$
  \tau_{\Theta,p} = \bigoplus_\tau (\ord_{p} \cC_{\Theta}(\tau)) \tau,
$$
where the sum ranges over all irreducible representations of $G$, and $\ord_{p} \cC_{\Theta}(\tau)$ is taken in $\{0,1\}$. 
\end{remark}

\begin{remark} If $\Theta$ is a Brauer relation (see Remark \ref{remark_Brauer_relations}), then Definition \ref{Def: p-adically nontrivial reps} coincides with \cite[Definition 2.50]{tamroot}. 
\end{remark}

\begin{example}[see \cite{tamroot} Examples 2.3, 2.4, 2.20, 2.22] \label{rem:tautable}
The following table describes generators for the group of Brauer relations in $C_2\!\times\!C_2$, $S_3$ and $D_{2p}$, along with the associated representations $\tau_{\theta,p}$ for the relevant primes $p$. For $G = S_3$ and $C_2\times C_2$, these choices of $\tau_{\Theta,p}$ appeared in Examples \ref{ex:S3part1} and \ref{ex:C2C2rankrecipe}.
$$\begin{array}{| c||c|c|c|}
\hline
G&\Theta&p&\tau_{\Theta,p}\\
\hline
C_2\times C_2&C_2^a + C_2^b + C_2^c - 2C_2\!\!\times\!\! C_2 - \{1\}&2&\mathds 1 \oplus\epsilon_a\oplus\epsilon_b\oplus\epsilon_c\\
\hline
S_3&2C_2 + C_3 - 2S_3 - \{1\}&3&\mathds 1  \oplus \epsilon \oplus \rho\\
\hline
D_{2p}&2C_2 + C_p - 2D_{2p} - \{1\}&p&\mathds 1  \oplus \epsilon \oplus \rho\\
\hline
\end{array}
$$ 
Here $C_2^a$, $C_2^b$, $C_2^c$ are the order 2 subgroups and $\epsilon_a$, $\epsilon_b$, $\epsilon_c$ the non-trivial order 2 characters of $C_2\times C_2$, $\rho$ is a (choice of) 2-dimensional irreducible representation of $S_3$ or $D_{2p}$ and $\epsilon = \det \rho$. 
\end{example}

\subsection{Alternative description of regulator constants} \label{sec:alt_desc}
The following reinterpretation of regulator constants is based on expositions, in the case of Brauer relations, given in \cite[\S 3]{bartel} and \cite[Lemma 3.2]{dokchitser_dokchitser_2009}. We begin with some notation.  

\begin{notation}\label{notat:G_mod_homs_invars}
Let $M$ be a $\mathbb{Z}[G]$-module (below we will take  $M=\mathcal{V}$ to be a self-dual $\L[G]$-representation, but this greater generality will be useful in later sections). For each subgroup $H$ of $G$ we have an isomorphism 
\begin{equation*}\label{eq:invariants_as_hom}
 \textup{Hom}_G(\mathbb{Z}[G/H],M)\overset{\sim}{\longrightarrow}M^{H} \tag{$*$}
\end{equation*}
given by evaluating homomorphisms at the trivial coset. Given subgroups $H_1,\ldots,H_n$ and $H_1',\ldots,H_{m}'$ of $G$, define $G$-sets 
$S=\bigsqcup_{i=1}^nG/H_i$ and $S'=\bigsqcup_{j=1}^{m}G/H_j' .$ 
Taking $\mathbb{Z}[S]$ and $\mathbb{Z}[S']$ to be the corresponding permutation modules, 
($*$)
induces isomorphisms 
\[\textup{Hom}_G(\mathbb{Z}[S],M)\iso \bigoplus_{i}M^{H_i}\quad \textup{ and }\quad  \textup{Hom}_G(\mathbb{Z}[S'],M)\iso \bigoplus_{j}M^{H_j'}.\]    
 Consequently, given $\Phi\in \textup{Hom}_G(\mathbb{Z}[S],\mathbb{Z}[S'])$, the  map from $\textup{Hom}_G(\mathbb{Z}[S'],M)$ to $\textup{Hom}_G(\mathbb{Z}[S], M)$ sending $f$ to $f\circ \Phi$ induces  a homomorphism
\begin{equation*} \label{eq:induced_G_hom}
 \Phi^*  : \bigoplus_{j}M^{H_j'}\rightarrow \bigoplus_{i}M^{H_i}.
\end{equation*}
The $G$-module $\mathbb{Z}[S]$ (resp. $\mathbb{Z}[S']$) is canonically self-dual, via the pairing making the elements of $S$ (resp. $S'$) an orthonormal basis. Given $\Phi\in\textup{Hom}_G(\mathbb{Z}[S],\mathbb{Z}[S'])$ we denote  by $\Phi^\vee$ the corresponding dual homomorphism $\Phi^\vee:\mathbb{Z}[S']\rightarrow \mathbb{Z}[S]$.

\end{notation}

\begin{definition} \label{Def: G-isogeny realising pseudo Brauer} 
Let $\Theta = \sum_{i} H_{i} - \sum_{j} H_{j}' \in \mathbb{Z}[\mathcal{H}]$ be a pseudo Brauer relation relative to a self-dual $\L[G]$-representation $\mathcal{V}$.
We say that a $\mathbb{Z}[G]$-module homomorphism $ \Phi:\bigoplus_{i} \mathbb{Z}[G/H_{i}] \to  \bigoplus_{j} \mathbb{Z}[G/H_{j}']$ \textit{realises}  $\Theta$ if the induced map
\[\Phi^{*}: \bigoplus_{j} \mathcal{V}^{H_{j}'} \to \bigoplus_{i} \mathcal{V}^{H_{i}}\]  
is an isomorphism. 
\end{definition}

\begin{lemma} \label{isomorphism_pseudo_brauer} Let $\Theta = \sum_{i} H_{i} - \sum_{j} H_{j}'$ be a pseudo Brauer relation relative to a self-dual $\L[G]$-representation $\mathcal{V}$. Then, there exists a $G$-module homomorphism $\Phi$ realising $\Theta$. 
\end{lemma}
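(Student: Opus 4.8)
The plan is to construct $\Phi$ explicitly from the defining isomorphism of the pseudo Brauer relation. Recall that $\Theta = \sum_i H_i - \sum_j H_j'$ being a pseudo Brauer relation relative to $\mathcal V$ means there exist $\C[G]$-representations $\rho_1,\rho_2$ with $\langle\rho_1,\mathcal V\rangle = \langle\rho_2,\mathcal V\rangle = 0$ and $\rho_1 \oplus \bigoplus_i \C[G/H_i] \cong \rho_2 \oplus \bigoplus_j \C[G/H_j']$. Writing $S = \bigsqcup_i G/H_i$ and $S' = \bigsqcup_j G/H_j'$ as in Notation \ref{notat:G_mod_homs_invars}, this says $\rho_1 \oplus \C[S] \cong \rho_2 \oplus \C[S']$ as $\C[G]$-modules. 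The first step is to observe that $\Hom_G(\Z[S],\Z[S'])$ spans $\Hom_G(\C[S],\C[S'])$ over $\C$ — indeed, $\Hom_G(\Z[S],\Z[S'])$ is a full-rank $\Z$-lattice inside the $\Q$-vector space $\Hom_G(\Q[S],\Q[S'])$, since $\Hom_G(-,-)$ commutes with the flat base change $\Z \to \Q$ for finitely generated modules, and then $\Hom_G(\Q[S],\Q[S'])\otimes_\Q\C = \Hom_G(\C[S],\C[S'])$.

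Next I would produce a suitable $\C[G]$-homomorphism $\psi : \C[S] \to \C[S']$. Since $\langle\rho_1,\mathcal V\rangle = \langle\rho_2,\mathcal V\rangle = 0$, the irreducible constituents common to $\C[S]$ and $\mathcal V$ are exactly those common to $\C[S']$ and $\mathcal V$: concretely, writing $\mathcal V_\C$-isotypic decompositions, the isomorphism $\rho_1\oplus\C[S]\cong\rho_2\oplus\C[S']$ restricted to the sum of isotypic components appearing in $\mathcal V$ gives an isomorphism between the "$\mathcal V$-part" of $\C[S]$ and that of $\C[S']$. Extend this isomorphism by zero on the remaining isotypic components of $\C[S]$ to get $\psi \in \Hom_G(\C[S],\C[S'])$. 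By construction, the induced map $\psi^* : \bigoplus_j \mathcal V^{H_j'}\to\bigoplus_i\mathcal V^{H_i}$ — which under the identification \eqref{eq:invariants_as_hom} is just precomposition, hence only "sees" the $\mathcal V$-isotypic parts of $S$ and $S'$ — is an isomorphism, since $\mathcal V^H = \Hom_G(\C[G/H],\mathcal V)$ depends only on the $\mathcal V$-isotypic component of $\C[G/H]$, and $\psi$ restricts to an isomorphism there. Here I am using that $\mathcal V$ is defined over $\L \subseteq \C$ so that $\mathcal V^H$ can be computed after base change to $\C$ and descends.

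The final step is to approximate $\psi$ by an integral homomorphism. Since $\Hom_G(\Z[S],\Z[S'])$ is $\C$-dense (indeed $\Q$-Zariski dense, being a full lattice) in $\Hom_G(\C[S],\C[S'])$, and since the condition "$\Phi^*$ is an isomorphism" is the non-vanishing of a determinant — a Zariski-open, nonempty (as it contains $\psi$) condition on $\Hom_G(\C[S],\C[S'])$ — there exists $\Phi \in \Hom_G(\Z[S],\Z[S'])$ with $\Phi^*$ an isomorphism. (Alternatively: pick any $\Q$-basis of $\Hom_G(\Q[S],\Q[S'])$ consisting of elements of $\Hom_G(\Z[S],\Z[S'])$, write $\psi$ in this basis, and perturb the coefficients slightly to rational then clear denominators; the determinant of $\Phi^*$ varies polynomially and is nonzero at $\psi$, so stays nonzero under small perturbation.) This $\Phi$ realises $\Theta$ in the sense of Definition \ref{Def: G-isogeny realising pseudo Brauer}.

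The main obstacle is the bookkeeping in the middle step: one must check carefully that "realising $\Theta$", i.e. $\Phi^*$ being an isomorphism on $\bigoplus\mathcal V^{H}$, really is controlled purely by the behaviour of $\Phi$ on the $\mathcal V$-isotypic components of the permutation modules, and that the vanishing hypotheses $\langle\rho_1,\mathcal V\rangle=\langle\rho_2,\mathcal V\rangle=0$ force these isotypic components of $\C[S]$ and $\C[S']$ to be abstractly isomorphic so that a genuine isomorphism $\psi$ (not merely an injection or surjection) exists. Once that is in place, the density/Zariski-openness argument is routine. One should also note the mild subtlety that everything is happening over $\C$ while $\mathcal V$ lives over $\L$; this is harmless because taking $H$-invariants commutes with the field extension $\L\subseteq\C$, so $\Phi^*$ is an isomorphism over $\L$ iff it is over $\C$.
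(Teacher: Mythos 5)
Your proof is correct, but it takes a genuinely different route from the paper's. The paper works integrally from the start: it normalises $\rho_1,\rho_2$ in \eqref{eq:pseudo_brauer_equation_defining} to have no common irreducible constituents, observes that they are then realisable over $\mathbb{Q}$, chooses $\mathbb{Z}[G]$-lattices $V_1,V_2$ with $V_i\otimes_{\mathbb{Z}}\mathbb{C}\cong\rho_i$ together with a $G$-module homomorphism $\phi\colon V_1\oplus\bigoplus_i\mathbb{Z}[G/H_i]\to V_2\oplus\bigoplus_j\mathbb{Z}[G/H_j']$ with finite kernel and cokernel, and takes $\Phi=\pi\circ\phi\circ\iota$; that this $\Phi$ realises $\Theta$ again comes down to $\textup{Hom}_G(V_i,\mathcal{V})=0$. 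You instead produce a complex solution $\psi$ by hand (an isomorphism on the $\mathcal{V}$-isotypic parts of $\mathbb{C}[S]$ and $\mathbb{C}[S']$, which the hypotheses $\langle\rho_1,\mathcal{V}\rangle=\langle\rho_2,\mathcal{V}\rangle=0$ force to be isomorphic, extended by zero elsewhere) and then descend to an integral $\Phi$ using that $\textup{Hom}_G(\mathbb{Z}[S],\mathbb{Z}[S'])$ spans $\textup{Hom}_G(\mathbb{C}[S],\mathbb{C}[S'])$ and that $\det(\Phi^*)\neq 0$ is a nonempty Zariski-open condition, finally clearing denominators. Your route buys you freedom from the paper's rationality step for $\rho_1,\rho_2$ and from the choice of lattices and of an integral isogeny, at the cost of a density/genericity argument (which is fine, since $\Phi\mapsto\Phi^*$ is linear and a nonzero polynomial cannot vanish on all rational points); the paper's construction is more explicit and stays within integral representation theory throughout. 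Both arguments hinge on the same core point, which you identify correctly: $\Phi^*$ only sees the $\mathcal{V}$-isotypic components of the permutation modules, the equality of dimensions $\sum_i\dim\mathcal{V}^{H_i}=\sum_j\dim\mathcal{V}^{H_j'}$ needed to speak of a determinant follows from the defining isomorphism, and taking $H$-invariants commutes with the extension $\L\subseteq\mathbb{C}$, so invertibility over $\mathbb{C}$ gives invertibility over $\L$.
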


\begin{proof} 
By Definition \ref{def:pseudo Brauer_relations} there are $\C G$-representations $\rho_1, \rho_2$ such that 
$\rho_1\oplus \bigoplus_{i} \mathbb{C}[G/H_{i}]$ is isomorphic to $\rho_2\oplus \bigoplus_{j} \mathbb{C}[G/H_{j}']$.
We may assume that $\rho_1$ and $\rho_2$ have no common irreducible constituents, so that, in particular, they are both realisable over $\Q$.
We can then find  free $\mathbb{Z}[G]$-modules $V_1$ and $V_2$  such that $V_{1} \otimes_{\mathbb{Z}} \mathbb{C} \iso \rho_{1}$ and $V_{2} \otimes_{\mathbb{Z}} \mathbb{C} \iso \rho_{2}$, and a $G$-module homomorphism \[\phi: V_1\oplus \bigoplus_{i} \mathbb{Z}[G/H_{i}] \to V_2\oplus \bigoplus_{j} \mathbb{Z}[G/H_{j}'] \ \] with finite kernel and cokernel. Denoting by $\iota$ and $\pi$ the inclusion/projection in/out of the permutation modules, one checks that $\Phi=\pi\circ \phi \circ \iota$ realises the pseudo Brauer relation $\Theta$. \end{proof}
\begin{remark}
If $\Theta= \sum_{i} H_{i} - \sum_{j} H_{j}'$ is a Brauer relation, then a $G$-map realising it is precisely a $G$-injection $\Phi: \bigoplus_{i} \mathbb{Z}[G/H_{i}] \to \bigoplus_{j} \mathbb{Z}[G/H_{j}']$ with finite cokernel. 
\end{remark}

\begin{notation} \label{Notation:Matrices}
Given finite dimensional $\L$-vector spaces $V, W$, with bases $\mathcal{B}_1=\{v_i\}_i,\mathcal{B}_2=\{w_j\}_j$ respectively, and given an   $\L$-linear map $T: V \to W$, we write $[T]_{\mathcal{B}_{1}}^{\mathcal{B}_{2}}$ to denote the matrix of $T$ relative to  $\mathcal{B}_{1}$  and $\mathcal{B}_{2}$. Similarly to Notation \ref{pairing_1_2_notat}, given a pairing $\Langle,\Rangle$ between $V$ and $W$,   we denote by $\Langle \mathcal{B}_1,\mathcal{B}_2\Rangle$ the matrix with $(i,j)$th entry $\Langle v_i,w_j\Rangle$.
\end{notation}

The following proposition (along with Corollary \ref{Regulator Constants and pullback basis}) gives the promised alternative description of regulator constants. Analogues for Brauer relations appear as  \cite[Theorem 3.2]{bartel}, \cite[Lemma 3.2]{dokchitser_dokchitser_2009}. 

We highlight that, while we have not yet shown that  $ \mathcal{C}_{\Theta}^{\mathcal{B},\mathcal{B}'}(\mathcal{V})$ is independent of the choice of pairing $\Langle ,\Rangle$ made in its definition, the proof of the proposition applies regardless of the choice made. As a by-product, this will prove the sought independence.

\begin{proposition}\label{comparison_determinant_regulator_constant}

Let $\Theta=\sum_{i} H_{i} - \sum_{j} H_{j}'$ be a pseudo Brauer relation relative to a self-dual $\L[G]$-representation $\mathcal{V}$.
For any $G$-module homomorphism $\Phi$ realising $\Theta$, we have 
\[    \mathcal{C}_{\Theta}^{\mathcal{B},\mathcal{B}'}(\mathcal{V}) = \frac{\textup{det} [( \Phi^{\vee})^{*}]_{\mathcal{B}}^{\mathcal{B}'}}{\textup{det}[\Phi^{*}]_{\mathcal{B}'}^{\mathcal{B}}}\] where $\mathcal{B}$, $\mathcal{B}'$ are bases for $\bigoplus_{i} \mathcal{V}^{H_{i}}$, $\bigoplus_{j} \mathcal{V}^{H_{j}'}$ respectively.
\end{proposition}

\begin{proof}
Fix a non-degenerate $G$-invariant pairing $\Langle,\Rangle$ on $\mathcal{V}$. We follow the proof of \cite[Lemma 3.2]{dokchitser_dokchitser_2009}. For a finite $G$-set $T$ we define the pairing $(,)_T$ on $\textup{Hom}_G(\mathbb{Z}[T],\mathcal{V})$ by setting
\[(f_1,f_2)_T=\frac{1}{|G|}\sum_{t\in T}\Langle f_1(t),f_2(t)\Rangle.\]
Take $S=\bigsqcup_iG/H_i$ and $S'=\bigsqcup_jG/H_j'$.  After identifying $\textup{Hom}_G(\mathbb{Z}[S],\mathcal{V})$ with $\bigoplus_{i}\mathcal{V}^{H_i}$ as in Notation \ref{notat:G_mod_homs_invars}, the pairing $(,)_S$ identifies with the pairing $\langle,\rangle_1$ of Notation \ref{pairing_1_2_notat}. Similarly, the pairing $(,)_{S'}$ identifies with the pairing $\langle,\rangle_2$. An easy computation then shows that $\Phi^*$ and $(\Phi^\vee)^*$ are adjoint for the pairings $\langle, \rangle_1$ and $\langle,\rangle_2$. We now compute
\begin{equation*}
\mathcal{C}_{\Theta}^{\mathcal{B},\mathcal{B}'}(\mathcal{V})= \frac{ \text{det}\langle \mathcal{B},\mathcal{B} \rangle_{1}}{\text{det} \langle \mathcal{B}',\mathcal{B}' \rangle_{2}}
= \frac{\text{det}\langle \mathcal{B},\Phi^*\mathcal{B}' \rangle_{1}}{\textup{det}  [\Phi^{*} ]_{\mathcal{B}'}^{\mathcal{B}}}\cdot \frac{\textup{det} [( \Phi^{\vee})^{*}]_{\mathcal{B}}^{\mathcal{B}'}}{\text{det} \langle (\Phi^\vee)^*\mathcal{B},\mathcal{B}'\rangle_{2}}. \qedhere
\end{equation*}
\end{proof}

\begin{corollary} \label{Regulator Constants and pullback basis} Let $\Theta=\sum_{i} H_{i} - \sum_{j} H_{j}'$ be a pseudo Brauer relation relative to a self-dual $\L[G]$-representation $\mathcal{V}$. Let $\Phi$ be a $G$-module homomorphism  realising $\Theta$ and $\mathcal{B}$ a basis for $\bigoplus_{i} \mathcal{V}^{H_{i}}$. 
Then
$$
\mathcal{C}_{\Theta}^{\mathcal{B}, \Phi^{\vee} \mathcal{B}}(\mathcal{V}) = \frac{1}{\textup{det} (\Phi^{\vee}   \Phi)^{*}}.
$$
In particular, $\mathcal{C}_{\Theta}^{\mathcal{B}, \Phi^{\vee} \mathcal{B}}(\mathcal{V})$ is independent of $\mathcal{B}$.  
\end{corollary}

\begin{proof} Immediate from Proposition \ref{comparison_determinant_regulator_constant}. 
\end{proof}
 \begin{proof}[Proof of Theorem \ref{Theorem: Independence of pairing}]
 Parts (2) and (3) follow readily from part (1). To prove (1), pick some $\Phi$ realising the pseudo Brauer relation $\Theta$, and pick a pairing $\Langle,\Rangle$ as in the definition of $\mathcal{C}_\Theta(\mathcal{V})$. The proof of Proposition \ref{comparison_determinant_regulator_constant} then shows that
 \[\mathcal{C}_\Theta(\mathcal{V})\equiv \textup{det} (\Phi^{\vee}   \Phi)^{*} \mod \L^{\times 2}.\]
 Since the right hand side is an element of $\L^\times$ which is independent of $\Langle,\Rangle$, the result follows. 
 \end{proof}
 
 \begin{remark} \label{rem:reg_const_bases_indep}
 By the same argument, for any bases $\mathcal{B}$ and $\mathcal{B}'$ as in Definition \ref{def:regulator_constants_for_pseudo_brauer}, the quantity $\mathcal{C}_{\Theta}^{\mathcal{B},\mathcal{B}'}(\mathcal{V})$ lies in $L^\times$, and is independent of the choice of pairing $\Langle,\Rangle$ used to define it.
 \end{remark}

 \begin{remark} \label{remark:representatives_for_vectorspaces}
The choice of representatives in $\mathcal{H}$ made at the start of this section does not affect the above results in any meaningful way. For instance, if $\Theta = \sum_{i} H_{i} - \sum_{j} H_{j}' \in \mathbb{Z}\mathcal{H}$ is a pseudo Brauer relation for $\mathcal{V}$, and we set $M_i=g_iH_ig_i^{-1}$, $M_i'=g_i'H_i'g_i'^{-1}$  for some $g_i, g_i'\in G$, then we have canonical isomorphisms   $\bigoplus_{i}\mathcal{V}^{H_i}\simeq \bigoplus_i\mathcal{V}^{M_i}$ and $\bigoplus_{i} \mathbb{Z}[G/M_i] \simeq\bigoplus_{i} \mathbb{Z}[G/H_i]$ given by $(x_i) \mapsto (g_i x_i)$ and $(y_iM_i) \mapsto (y_ig_iH_i)$ respectively. Similarly for $H_j'$. 
We conclude that the calculation of a regulator constant from Definition  \ref{def:regulator_constants_for_pseudo_brauer} and the notion of realising a pseudo Brauer relation from Definition \ref{Def: G-isogeny realising pseudo Brauer} are consistent even after a change from $\Theta$ to $\Theta'=\sum_{i}M_{i} - \sum_{j} M_{j}'$. In addition, a change from $\Theta$ to $\Theta'$ does not affect Proposition \ref{comparison_determinant_regulator_constant} or  Corollary \ref{Regulator Constants and pullback basis}.
\end{remark}
\end{subsection}

\section{Isogenies induced from pseudo Brauer relations}\label{sec:isogenies}
Let $X$ be a curve defined over a field $K$ of characteristic $0$ and let $G$ be a finite subgroup of $\Aut_K(X)$. In what follows, we will exploit a consequence of \cite[Theorem 4.14]{Etale_paper}, presented as Theorem \ref{isogeny_from_pseudo_brauer} below, which allows us to associate certain isogenies to   pseudo Brauer relations relative to the $\ell$-adic Tate module $V_{\ell}(\textup{Jac}_{X})$. 

\begin{definition}\label{def:pseudo_brauer_geometry}
Let $\ell$ denote any prime. We say that $\Theta$ is a pseudo Brauer relation for $G$ and $X$ if $\Theta$ is a pseudo Brauer relation relative to $V_{\ell}(\textup{Jac}_{X})$ in the sense of Definition \ref{def:pseudo Brauer_relations}. When there is no ambiguity, we  call these pseudo Brauer relations for $X$.
\end{definition}
\begin{remark}
 Having fixed an embedding $K\hookrightarrow \mathbb{C}$, we have  a $\mathbb{Q}_{\ell}[G]$-isomorphism \[H_1(\textup{Jac}_{X}(\mathbb{C}),\mathbb{Z}) \otimes \mathbb{Q}_{\ell} \iso V_{\ell}(\textup{Jac}_{X})\] for every prime $\ell$. Thus the notion of a pseudo Brauer relation  for $G$ and $X$ is independent of $\ell$.
\end{remark}

\begin{theorem} \label{isogeny_from_pseudo_brauer}
Let $X$ be a curve over a field $K$ of characteristic $0$, and $G$ be a finite subgroup of $\Aut_K(X)$. Let $\sum_{i} H_{i} - \sum_{j} H_{j}'$ be a pseudo Brauer relation for $X$ realised by $\Phi$ (in the sense of Definition \ref{Def: G-isogeny realising pseudo Brauer}). Then, the following hold.
\begin{enumerate}
\item  The $G$-map $\Phi$ induces a $K$-isogeny
\[f_{\Phi}: \prod_{j} \textup{Jac}_{X/H_{j}'} \to \prod_{i} \textup{Jac}_{X/H_{i}}.\]
\item If $\Phi'$ realises a pseudo Brauer relation $\sum_{j} H_{j}' - \sum_{k} {H}_{k}''$ for $X$, then the composition $\Phi'\Phi$ realises $\sum_{i} H_{i} - \sum_{k} {H}_{k}''$ and $f_{\Phi'\Phi} = f_{\Phi} f_{\Phi'}.$
\item  The dual homomorphism $\Phi^{\vee}$ (as in Notation \ref{notat:G_mod_homs_invars}) realises the pseudo Brauer relation $\sum_{j} H_{j}' - \sum_{i} H_{i}$ for $X$, and we have $f_{\Phi^{\vee}}=(f_{\Phi})^{\vee}$ where  $(f_{\Phi})^{\vee}$ denotes the dual of $f_{\Phi}$ with respect to the canonical principal polarisations.  
\end{enumerate}
\end{theorem}
\begin{proof}
(1) follows from \cite[Theorem 4.14(1,3)]{Etale_paper}. For (2), it is clear that $\Phi'\Phi$ realises the pseudo Brauer relation $\sum_{i} H_{i} - \sum_{k} {H}_{k}''$, while the equailty $f_{\Phi'\Phi} = f_{\Phi} f_{\Phi'}$ follows from \cite[Theorem 4.14(2)]{Etale_paper}. (3) follows from \cite[Theorem 4.14(1)]{Etale_paper}.
\end{proof}

The next result concerns the degree of the isogeny $f_{\Phi}$.

\begin{proposition} \label{prop:deg=regulator_constant}  Suppose that $\Omega^{1}(\JX)$ is a self-dual $G$-representation. Fix any basis $\mathcal{B}_{1}$ for $\Omega^{1}(\prod_{i} \Jac_{X/H_{i}})$ and let $\Phi^{\vee}\mathcal{B}_{1}$ be the basis for $\Omega^{1}(\prod_{j} \Jac_{X/H_{j}'})$ obtained by applying $(\Phi^{\vee})^{*}$ to the basis vectors in $\mathcal{B}_{1}.$  
\begin{enumerate}
\item We have \[\mathcal{C}_{\Theta}^{\mathcal{B}_1, \Phi^{\vee} \mathcal{B}_1}(\Omega^{1}(\JX))^{-1}= \pm{\textup{deg}(f_{\Phi})}.\]
\item If $\Omega^{1}(\JX)$ is orthogonal, then \[\mathcal{C}_{\Theta}^{\mathcal{B}_1, \Phi^{\vee} \mathcal{B}_1}(\Omega^{1}(\JX))^{-1}= {\textup{deg}(f_{\Phi})}.\]
\item For any $G$-maps  $\Phi_1$, $\Phi_2$ realising $\Theta$,  ${\textup{deg}(f_{\Phi_1})}/{\textup{deg}(f_{\Phi_2})}$ lies in $\mathbb{Q}^{\times 2}$.
\end{enumerate} 
\end{proposition}
\begin{proof}
For (1), we compute
\begin{eqnarray*}
\mathcal{C}_{\Theta}^{\mathcal{B}_1, \Phi^{\vee} \mathcal{B}_1}(\Omega^{1}(\JX))^{-2} & \overset{\substack{\textup{Cor. \ref{Regulator Constants and pullback basis} \&}\\ \textup{\cite[Lem. 5.10(1)]{Etale_paper}}}}{=}  & \textup{det}\big{(}(\Phi^{\vee}  \Phi)^{*} \ | \ V_{\ell}(\prod\textup{Jac}_{X/H_{i}}) \big{)} \\ 
&\overset{\textup{\cite[Lem. 4.28]{Etale_paper}}}{=} & \textup{det}\big{(}f_{\Phi^{\vee}  \Phi} \ | \ V_{\ell}(\prod\textup{Jac}_{X/H_{i}}) \big{)} \\ 
&\overset{\textup{\cite[Prop. 12.9]{MR861974}}}{=}& \textup{deg}(f_{\Phi^\vee\Phi})\\&\overset{\textup{Thm. \ref{isogeny_from_pseudo_brauer} (2),(3)}}{=}&\textup{deg}(f_\Phi)^2.
\end{eqnarray*} 
 For (2), the $G$-representation $\Omega^{1}(\textup{Jac}_{X})$ is realisable over $\mathbb{R}$, say on an $\mathbb{R}$-vector space $W_{\mathbb{R}}$. By evaluating $\mathcal{C}_{\Theta}^{\mathcal{B}_1, \Phi^{\vee} \mathcal{B}_1}(\Omega^{1}(\JX))$ with respect to an $\mathbb{R}$-basis $\mathcal{B}_1$ for  $\prod_{i} (W_{\mathbb{R}})^{H_i}$, Lemma \ref{reg_constant_is_positive} combined with Theorem \ref{Theorem: Independence of pairing}(1) tell us that $\mathcal{C}_{\Theta}^{\mathcal{B}_1, \Phi^{\vee} \mathcal{B}_1}(\Omega^{1}(\JX))$ is a positive real number. Since the equality in $(1)$ holds for any $\mathcal{B}_1$, we obtain the result.
  For (3), note that $\Omega^{1}(\JX)^{\oplus 2}$ is realisable over $\mathbb{Q}$ by \cite[Lemma 5.10(2)]{Etale_paper}. If follows from \cite[Theorem 32.15]{Splitting-fields-CSA} that there exist infinitely many distinct quadratic extensions $L/\mathbb{Q}$ such that $\Omega^{1}(\textup{Jac}_{X})$ is realisable over $L$. For each such, we deduce from part (1) and Theorem \ref{Theorem: Independence of pairing}(1) that 
$\pm\tfrac{\textup{deg}(f_{\Phi_1})}{\textup{deg}(f_{\Phi_2})}   \in L^{\times 2}.$
  This is only possible if $\tfrac{\textup{deg}(f_{\Phi_1})}{\textup{deg}(f_{\Phi_2})}   \in \mathbb{Q}^{\times 2}$. 
\end{proof}
\section{Local formulae for Selmer rank parities}  \label{sec:Rank_Parity} 
Let $X/K$ be a curve defined over a number field $K$, and let $G$ be a finite subgroup of $\Aut_K(X)$. The main result of this section expresses the valuation of the regulator constant of the dual $p^\infty$-Selmer group, ${\mathcal X}_p(\JX)$, in terms of   explicit local invariants. 

\begin{theorem} \label{Theorem: Local Formula}
Let $X$ be a curve over a number field $K$, and $G$ be a finite subgroup of $\Aut_K(X)$. Let $\Theta$ be a pseudo Brauer relation for $X$, and suppose that $\Omega^{1}(\JX)$ is a self-dual $K[G]$-representation. Then \[  \ord_p \mathcal{C}_{\Theta} (\mathcal{X}_{p}(\JX)) \equiv \sum_{v \textup{ place of }K} \ord_p\Lambda_{\Theta}(X/K_{v})\mod  2,\]
where the local invariant $\Lambda_{\Theta}(X/K_{v})$ is as in Definition \ref{def:lemma:local_invariants}. In particular, for any choice of $\tau_{\Theta,p}$ as in Definition \ref{Def: p-adically nontrivial reps},
$$
\langle \tau_{\Theta,p}, {\mathcal X}_p(\JX)\rangle \equiv \sum_{v \textup{ place of }K} \ord_p\Lambda_\Theta(X/K_v) \mod 2.
$$
\end{theorem}

To prove this theorem, we begin by defining the following local invariant 
$$\lambda_{\Theta,\Phi}(X/K_{v}) =\frac{\#\textup{coker} f_{\Phi}(K_{v}) }{\#\textup{ker} f_{\Phi}(K_{v})}\cdot \frac{\prod_{i}\mu_v({X/H_{i}})}{\prod_{j}\mu_v( {X/H_{j}'})},$$
where $\Phi$ is a $G$-map realising $\Theta$, $\textup{coker}f_\Phi(K_v)$ and $\textup{ker}f_\Phi(K_v)$ are the induced maps on $K_v$-points and $\mu_v$ encodes whether a curve is deficient at a place $v$ (see \S \ref{subsec_deficiency} below). We prove an analogue of the above formula (see Theorem \ref{Theorem: Local_Formula_dependent_on_differnetials}) with $\lambda_{\Theta, \Phi}$ in place of $\Lambda_{\Theta}$. Unfortunately, $\lambda_{\Theta, \Phi}$ depends on the choice of $\Phi$. To remove this dependence (see Lemma \ref{lemma_tilde_lambda}(1)), we introduce a revised invariant $$\tilde \lambda_{\Theta}(X/K_{v})= \lambda_{\Theta,\Phi}(X/K_{v})\cdot \bigl{|}\sqrt{\textup{deg}(f_{\Phi})}\bigr{|}_{v}.$$
The drawback now is that $\ord_p \tilde \lambda_{\Theta} \equiv 0, \,\frac{1}{2},\, 1\textup{ or }\, \frac{3}{2} \bmod 2$. We finally fix this (see Theorem \ref{Thm:main_local_inv_thm}(1)) by defining
$${\Lambda}_{\Theta}(X/K_{v}) =\tilde \lambda_{\Theta}(X/K_{v}) \cdot \Big{|} \sqrt{\ct(\Omega^{1}(\JX))}\Big{|}_{v},$$
 where $\ct\in \mathbb{N}$ denotes the square-free part of $\deg(f_{\Phi})$, calculated with respect to any $\Phi$. We caution that whilst $\tilde\lambda_\Theta$ is multiplicative in $\Theta$, in general $\Lambda_\Theta$ is not.

\begin{remark}
When $K_v/\Q_\ell$ is a finite extension and $p\neq 2 \textup{ or } \ell$, we will additionally show (see Theorem \ref{Theorem:Properties of local invariant}(3)) that
$$\textup{ord}_{p} \, \Lambda_{\Theta}(X/K_v) = \textup{ord}_{p}\, \frac{\prod_{i} c_{v}(\textup{Jac}_{X/H_{i}})}{\prod_{j} c_{v}(\textup{Jac}_{X/H_{j}'})}$$ where $c_v(A)$ denotes the Tamagawa number of an abelian variety $A/K_v$. 
Lemma \ref{local_term_neron_basis} details the analogue of this formula for an arbitrary prime $p$.
\end{remark}

\begin{remark}By Theorem \ref{thm:introgaloisdescent}(5),(7), $\mathcal{X}_{p}(\textup{Jac}_{X})$ is a self-dual $G$-representation for all $p$, and any pseudo Brauer relation for $X$ is a pseudo Brauer relation relative to $\mathcal{X}_{p}(\JX)$.  Thus, the left-hand side of the formula in Theorem \ref{Theorem: Local Formula} is well-defined.  \end{remark}

\subsection{Deficiency} \label{subsec_deficiency} Here we remind the reader of the notion of deficiency for curves over local fields. It is well-known that when the curve arises via base-change from a number field, this concept controls the size, modulo rational squares, of the $2$-primary part of the Shafarevich--Tate group of its Jacobian (see \cite[\S 8]{poonen1999cassels} for geometrically connected curves, and  \cite[\S 5.5]{Etale_paper} for an  extension to   the curves of Convention \ref{convention_curves}). 

\begin{definition}[\cite{Etale_paper}, Definition 5.13] \label{def:deficiency}  Let $\mathcal{K}$ be a local field of characteristic 0.
 
 A geometrically connected curve $X/\mathcal K$ of genus $g$ is called {\em deficient} if it has no $\mathcal{K}$-rational divisor of degree $g-1$. For such $X$, we define 
 \[\mu_\mathcal{K}(X)= \begin{cases} 
2 & \text{if $X$ is deficient,} \\
1 & \textup{otherwise}.
\end{cases} \]

Suppose that $X/\mathcal K$ is connected (but not necessarily geometrically connected). We write $\mathcal{L}$ for the minimal field of definition of one of its geometric components,  $Y$ say,  and define $\mu_{\mathcal{K}}(X)=\mu_\mathcal{L}(Y)$.

Finally, writing $X=\bigsqcup_iX_i$ as a disjoint union of connected components, we define 
$\mu_\mathcal{K}(X)=\prod_{i}\mu_{\mathcal K}(X_{i}).$

If the field $\mathcal{K}$ is clear from context, we will often omit it from the notation.
Further, when $\mathcal{K} = K_v$ is the completion of a number field $K$ at a place $v$, we write $\mu_v(X)$ in place of $\mu_{K_v}(X)$. 
\end{definition}

\subsection{A local formula in $\lambda_{\Theta, \Phi}(X/\mathcal{K})$}\label{sec:littlelambda}
Here we prove the analogue of Theorem \ref{Theorem: Local Formula} obtained by replacing $\Lambda_\Theta$ with the local invariant $\lambda_{\Theta, \Phi}$  defined below. 

\begin{notation}\label{not:C} For an abelian variety $A$ over a local field $\mathcal K$ of characteristic $0$, and a choice of a non-zero exterior form $\omega$ on $A$, we write
$$C(A, \omega) = \begin{cases}c(A) \cdot |\omega/\omega^{0}| & \textup{when $\mathcal K/\Q_p$ is finite},\\
\int_{A(\mathcal K)} |\omega| &\textup{when ${\mathcal K}=\mathbb{R}$},\\ 2^{\text{dim}(A)}\int_{A(\mathcal K)} |\omega \wedge \bar{\omega}|& \textup{when ${\mathcal K}=\mathbb{C}$}. \end{cases}$$
Here $\omega/\omega^{0}\in \mathcal K^{\times}$ is such that $\omega = (\omega/\omega^{0})\cdot \omega^{0}$, where $\omega^{0}$ is a N\'eron exterior form on $A$.
\end{notation}

\begin{definition} \label{def:local_invariants} 
Let $\mathcal{K}$ be a local field of characteristic $0$, $X/\mathcal{K}$ be a curve, $G$ be a finite subgroup of $\Aut_\K(X)$ and $\Theta=\sum_{i} H_{i} - \sum_{j} H_{j}'$  be a pseudo Brauer relation for $X$. Fix bases $\mathcal{B}_{1}$, $\mathcal{B}_{2}$ for $\Omega^{1}(\prod_{i}  \Jac_{X/H_i})$, $\Omega^{1}(\prod_{j} \Jac_{X/H_j'})$ and write $\omega(\mathcal{B}_{1})$, $\omega(\mathcal{B}_{2})$ for the exterior forms given by the wedge product of the elements in $\mathcal{B}_{1}$, $\mathcal{B}_{2}$ respectively. We define
\[ \lambda_{\Theta}^{\mathcal{B}_{1},\mathcal{B}_{2}}(X/\mathcal{K})= \frac{C( \prod_{i} \Jac_{X/H_{i}},\omega(\mathcal{B}_{1})) \ }{C( \prod_{j} \Jac_{X/H_{j}'},\omega(\mathcal{B}_{2})) } \cdot \frac{\prod_{i}\mu({X/H_{i}})}{\prod_{j}\mu( {X/H_{j}'})} \] 
where $C$ is given in Notation \ref{not:C} and $\mu$ is as in Definition \ref{def:deficiency}.

Given a $G$-map $\Phi$ realising $\Theta$, write $\Phi^{\vee}\mathcal{B}_{1}$ for the basis obtained by applying $(\Phi^{\vee})^{*}$ to the elements of $\mathcal{B}_{1}$ (cf.  Notation \ref{notat:G_mod_homs_invars}). We additionally define
$$\lambda_{\Theta, \Phi}(X/\mathcal K) = \lambda_{\Theta}^{\mathcal{B}_{1}, \Phi^{\vee}\mathcal{B}_{1}}(X/\mathcal{K}).$$ 

We write $f_{\Phi}(\mathcal{K})$ for the map on $\mathcal{K}$-points induced by the isogeny $f_{\Phi}$. 
\end{definition}

\begin{lemma} \label{pullback-basis-lemma}   We have
\[ \lambda_{\Theta, \Phi}(X/\mathcal K)  = \frac{\#\textup{coker} f_{\Phi}(\mathcal{K}) }{\#\textup{ker} f_{\Phi}(\mathcal{K})}\cdot \frac{\prod_{i}\mu({X/H_{i}})}{\prod_{j}\mu( {X/H_{j}'})}.\]
In particular, $\lambda_{\Theta, \Phi}(X/\mathcal K)\in \mathbb{Q}^\times$ is independent of the choice of $\mathcal B_1$.
\end{lemma}
\begin{proof}
By applying \cite[Remark 4.29]{Etale_paper} with $F=\Omega^1(-)$, and taking exterior powers, we deduce that $f_{\Phi}^{*}\omega(\mathcal{B}_{1}) = \omega(\Phi^{\vee}\mathcal{B}_{1})$.
As in the proof of  \cite[Theorem 7.3]{MR2261462}, we have    \[ \frac{C(\prod_{i} \Jac_{X/H_{i}}, \omega(\mathcal{B}_{1}))}{C(\prod_{j} \Jac_{X/{H_{j}'}}, \omega(\Phi^{\vee}\mathcal{B}_{1}))}  = \frac{\#\textup{coker} f_{\Phi}(\mathcal{K}) }{\#\textup{ker} f_{\Phi}(\mathcal{K})},\] giving the required result. \qedhere
\end{proof}

\begin{theorem}  \label{Theorem: Local_Formula_dependent_on_differnetials}
Let $X$ be a curve over a number field $K$ and $G$ be a finite subgroup of $\Aut_K(X)$. Let $\Theta$ be a pseudo Brauer relation for $X$, $\Phi$ a $G$-map realising $\Theta$ and $p$ a prime. Then, 
 \[  \ord_p \mathcal{C}_{\Theta}(\mathcal{X}_{p}(\JX)) \equiv \sum_{v \textup{ place of }K} \ord_p \lambda_{\Theta,\Phi}(X/K_{v})  \mod 2.\]
\end{theorem} 

\begin{proof}
For a $K$-isogeny $f: A\to B$, we write
\begin{equation*}  \begin{split} Q(f) = \#\text{coker}(f:A(K)/A(K)_{\text{tors}} \to B(K)/B(K)_{\text{tors}} )
\cdot \# \text{ker}(f: \Sha(A)_{\text{div}} \to \Sha(B)_{\text{div}}),\end{split} \end{equation*}
where $\Sha_{\textup{div}}$ denotes the divisible part of $\Sha$. Thus, if $f$ is a self-isogeny of $A$, then $\ord_{p}Q(f) = \ord_{p} \textup{det}( f | \mathcal{X}_{p}(A))$. For each place $v$ of $K$, let $\mathcal{B}_{v}$ be a basis for $\Omega^{1}(\prod_{i}\textup{Jac}_{X/{H_{i}}}/K_{v})$ and write 
$\omega(\mathcal{B}_{v})$ for the exterior form on $\prod_{i} \textup{Jac}_{X/H_{i}}/K_{v}$ obtained by taking the wedge product of elements in $\mathcal{B}_{v}$, and similarly for $\omega(\Phi^{\vee}\mathcal{B}_{v})$. As in the proof of Lemma \ref{pullback-basis-lemma}, $\omega(\Phi^{\vee}\mathcal{B}_{v}) = f_{\Phi}^{*}\omega(\mathcal{B}_{v})$ and so

\begin{eqnarray*}
\frac{{Q(f_{\Phi})}}{{Q(f_{\Phi}^{\vee})}} &\overset{\textup{\cite[Thm. 4.3]{MR2680426}}}{\equiv}& \frac{\prod_{v}C_{v}(\prod_{i} \textup{Jac}_{X/{H_{i}}}, \omega(\mathcal{B}_{v}))}{\prod_{v}C_{v}(\prod_{j} \textup{Jac}_{X/{H_{j}'}}, f_{\Phi}^{*}\omega(\mathcal{B}_{v}))} \cdot \frac{\prod_{i}\#\Sha_{0}(\Jac_{X/H_{i}})[2^{\infty}]}{\prod_{j}\#\Sha_{0}(\Jac_{X/H_{j}'})[2^{\infty}]}\\
&\overset{\text{\cite[Prop. 5.14]{Etale_paper}}}{\equiv}& \prod_{v} \lambda_{\Theta,\Phi}(X/K_{v}) \mod \mathbb{Q}^{\times2}\end{eqnarray*} where $C_v(A,\omega)$ denotes $C(A,\omega)$ for $A/K_v$. On the other hand, by parts (2) and (3) of Theorem \ref{isogeny_from_pseudo_brauer}, we have \[\frac{{Q(f_{\Phi})}}{{Q(f_{\Phi}^{\vee})}} ~\equiv~ {{Q(f_{\Phi})}}{{Q(f_{\Phi}^{\vee})}}~=~Q(f_{\Phi} f_{\Phi}^{\vee})~=~ Q(f_{\Phi} f_{\Phi^{\vee}})~=~Q(f_{\Phi^{\vee}  \Phi}) \mod \mathbb{Q}^{\times2}.\]  

\noindent Putting everything together, we see that
\begin{eqnarray*}
\ord_{p}\displaystyle\prod_{v} \lambda_{\Theta,\Phi}(X/K_{v}) 
&\equiv& \ord_{p} \det\big{(}f_{\Phi^{\vee} \Phi} \ | \ \mathcal{X}_{p}\big{(}\prod_{i} \Jac_{X/H_{i}}\big{)}\big{)}\\[-0.5em]
&\overset{\textup{\cite[Lem. 4.28]{Etale_paper}}}{\equiv} &\ord_{p} \det\big{(}(\Phi^{\vee} \Phi)^{*} \ |\ \mathcal{X}_{p}\big{(}\prod_{i}\Jac_{X/H_{i}} \big{)}\big{)}\\[-0.5em]
&\overset{\textup{Cor. \ref{Regulator Constants and pullback basis}}}{\equiv}&\ord_{p}\cC_{\Theta}(\mathcal{X}_{p}(\JX)) \mod 2
\end{eqnarray*} 
as claimed.
\end{proof}

\subsection{The local invariant $\tilde{\lambda}_{\Theta}(X/\mathcal{K})$}\label{subsec_tilde_lambda}
Assuming that $\Omega^{1}(\textup{Jac}_{X})$ is a self-dual $\mathcal{K}[G]$-representation, we now define a revised version of the local invariant $\lambda_{\Theta,\Phi}$.

\begin{definition} \label{temporary_lambda}
Let $\mathcal{K}$ be a local field of characteristic $0$, $X/\mathcal{K}$ be a curve, $G$ be a finite subgroup of $\Aut_\K(X)$ and $\Theta=\sum_{i} H_{i} - \sum_{j} H_{j}'$  be a pseudo Brauer relation for $X$.
Fix bases $\mathcal{B}_{1}$, $\mathcal{B}_{2}$ for $\Omega^{1}(\prod_{i}  \Jac_{X/H_i})$, $\Omega^{1}(\prod_{j} \Jac_{X/H_j'})$. Assuming that $\Omega^{1}(\textup{Jac}_{X})$ is self-dual as a $\mathcal{K}[G]$-representation, we define
\begin{equation*} \label{Temporary_constant}
\tilde{\lambda}_{\Theta}(X/\mathcal{K})=\frac{\lambda_{\Theta}^{\mathcal{B}_{1}, \mathcal{B}_{2}}(X/\mathcal{K})}{\Big|\sqrt{\mathcal{C}_{\Theta}^{\mathcal{B}_1,\mathcal{B}_{2}}(\Omega^{1}(\JX)) } \Big|_{\mathcal{K}}},
\end{equation*}
where $\mathcal{C}_{\Theta}^{\mathcal{B}_{1},\mathcal{B}_{2}}$ is the regulator constant evaluated with respect to $\mathcal{B}_{1}$, $\mathcal{B}_{2}$ as in Definition \ref{def:regulator_constants_for_pseudo_brauer}.
\end{definition}

\begin{remark}
By Theorem \ref{thm:introgaloisdescent}(7), any pseudo Brauer relation $\Theta$ for $X$ is also a pseudo Brauer relation for $\Omega^{1}(\JX)$. Thus, $\mathcal{C}_{\Theta}^{\mathcal{B}_1,\mathcal{B}_{2}}(\Omega^{1}(\JX)) $ is well-defined. 
\end{remark}

\begin{lemma} \label{lemma_tilde_lambda}  Suppose that $\Omega^{1}(\JX)$ is a self-dual $\mathcal{K}[G]$-representation.  
\begin{enumerate}
\item  $\tilde{\lambda}_{\Theta}(X/\mathcal{K})$ is independent of the bases $\mathcal{B}_{1}$, $\mathcal{B}_{2}$.
\item $\tilde{\lambda}_{\Theta}(X/\mathcal{K})$ is independent of how $\Theta$ is expressed as a formal linear combination of conjugacy classes of subgroups of $G$. That is, for all subgroups $H$ of $G$, $$\tilde{\lambda}_{\Theta}(X/\mathcal{K}) = \tilde{\lambda}_{\Theta+(H-H)}(X/\mathcal{K}).$$
\item Given pseudo Brauer relations $\Theta_1$, $\Theta_2$ for $X$, \[\tilde{\lambda}_{\Theta_1+\Theta_2}(X/\mathcal{K}) = \tilde{\lambda}_{\Theta_1}(X/\mathcal{K})\tilde{\lambda}_{\Theta_2}(X/\mathcal{K}).\]
\item  For every $G$-map $\Phi$ realising $\Theta$, \[ \tilde{\lambda}_{\Theta}(X/\mathcal{K}) =\frac{\#\textup{coker} f_{\Phi}(\mathcal{K}) }{\#\textup{ker} f_{\Phi}(\mathcal{K})}\cdot \frac{\prod_{i}\mu({X/H_{i}})}{\prod_{j}\mu( {X/H_{j}'})} \cdot \bigl{|} \sqrt{\textup{deg}(f_{\Phi})}\bigr{|}_{\mathcal{K}}.\]
\end{enumerate}
\end{lemma}

\begin{proof} (1) holds since, using Notation \ref{Notation:Matrices},
 \begin{equation*} \label{Eqn: small Lambda and change of basis} \frac{\lambda_{\Theta}^{ \tilde{\mathcal{B}}_{1}, \tilde{\mathcal{B}}_{2}}(X/\mathcal{K})}{\lambda_{\Theta}^{\mathcal{B}_{1},\mathcal{B}_{2} }(X/\mathcal{K})}=  \frac{| \textup{det}\big{(}[\textup{Id}]_{\tilde{\mathcal{B}}_{1}}^{{\mathcal{B}}_{1}}\big{)}|_{\mathcal{K}}}{| \textup{det}\big{(}[\textup{Id}]_{\tilde{\mathcal{B}}_{2}}^{{\mathcal{B}}_{2}}\big{)}|_{\mathcal{K}}}\quad\textup{ and }\quad \frac{\mathcal{C}_{\Theta}^{\tilde{\mathcal{B}}_{1},\tilde{\mathcal{B}}_{2}}(\Omega^{1}(\JX))}{\mathcal{C}_{\Theta}^{\mathcal{B}_{1},\mathcal{B}_{2}}(\Omega^{1}(\JX))}= \frac{\textup{det}\big{(}[\textup{Id}]_{\tilde{\mathcal{B}}_{1}}^{{\mathcal{B}}_{1}}\big{)}^{2}}{\textup{det}\big{(}[\textup{Id}]_{\tilde{\mathcal{B}}_{2}}^{\mathcal{B}_{2}}\big{)}^{2}}.
 \end{equation*}  
For (2), given a basis $\mathcal{B}_H$ for $\Omega^{1}(\textup{Jac}_{X/H})$, we write ${\mathcal{B}}_1' = \mathcal{B}_{1} \sqcup \mathcal{B}_{H}$ for the corresponding basis for $\Omega^{1}(\prod_{i} \textup{Jac}_{X/H_i} \times \textup{Jac}_{X/H})$. Similarly, we write ${\mathcal{B}}_{2}' = \mathcal{B}_{2} \sqcup \mathcal{B}_H$. Evaluating $\tilde{\lambda}_{\Theta+(H-H)}(X/\mathcal{K})$ with respect to ${\mathcal{B}}_1'$, ${\mathcal{B}}_2'$ and $\tilde{\lambda}_{\Theta}(X/\mathcal{K})$ with respect to $\mathcal{B}_{1}$, $\mathcal{B}_{2}$, gives the desired equality, which 
holds for any choice of bases by (1). (3) follows in the same way as (2). (4) follows from part (1), Lemma \ref{pullback-basis-lemma} and Proposition \ref{prop:deg=regulator_constant}(1). 
\end{proof}

\begin{theorem}\label{thm:lambdatilde} With the same set up as in Theorem \ref{Theorem: Local_Formula_dependent_on_differnetials}, and supposing that $\Omega^{1}(\JX)$ is a self-dual ${K}[G]$-representation, we have
 \[  \ord_p \mathcal{C}_{\Theta}(\mathcal{X}_{p}(\JX)) \equiv  \sum_{v \textup{ place of }K}  \ord_p  \tilde{\lambda}_{\Theta}(X/K_{v})  \mod 2\mathbb{Z},\]
where we extend $\textup{ord}_{p}$ to $\mathbb{Q}(\sqrt{p})$ so that $\ord_p  \tilde{\lambda}_{\Theta}(X/K_{v}) \in \frac{1}{2}\mathbb{Z}.$
\end{theorem} 
\begin{proof} Let $\Phi$ be any $G$-map realising $\Theta$.
By {Lemma \ref{lemma_tilde_lambda}(4)}, $\tilde{\lambda}_{\Theta}(X/K_v)= {\lambda_{\Theta, \Phi}(X/\mathcal{K})}\cdot {\left|\textup{deg} (f_{\Phi})\right |^{1/2}_{v}}$ for all $v$ independently of $\Phi$. Since $\prod_{v} |\textup{deg}(f_{\Phi})|_v=1$, we deduce that \[\ord_{p} \prod_{v} \tilde{\lambda}_{\Theta}(X/K_v) = \ord_{p} \prod_{v} {\lambda}_{\Theta, \Phi}(X/K_v) \overset{\textup{Thm.  \ref{Theorem: Local_Formula_dependent_on_differnetials}}}\equiv \ord_{p} \mathcal{C}_{\Theta}(\mathcal{X}_{p}(\textup{Jac}_{X}))\mod 2. \qedhere\]
\end{proof}

Although $\tilde{\lambda}_{\Theta}(X/\mathcal{K})$ is independent of the underlying bases of differentials, it is difficult to work with in practice since it may not be rational.

\subsection{The local invariant $\Lambda_{\Theta}(X/\mathcal{K})$ and proof of Theorem \ref{Theorem: Local Formula}}\label{subsec_correction_term}   Here we detail how $\Lambda_{\Theta}$ is obtained from the local invariant $\tilde{\lambda}_{\Theta}(X/\mathcal{K})$ introduced in Definition \ref{temporary_lambda}. This involves introducing a correction term which we will denote by $\ct$.
We show that $\Lambda_{\Theta}$ is rational and conclude the section by proving Theorem \ref{Theorem: Local Formula}.

\begin{definition} \label{Def:correction_term} 
Let ${L}$ be a field of characteristic 0, $X/{L}$ be a curve, $G$ be a finite subgroup of $\Aut_L(X)$ and $\Theta$ be a pseudo Brauer relation for $X$ realised by a $G$-map $\Phi$. Assuming that $\Omega^{1}(\textup{Jac}_{X})$ is self-dual as an $L[G]$-representation, we define
$\ct(\Omega^{1}(\JX))$ to be the square-free integer equivalent to $\textup{deg}(f_{\Phi}) \bmod \mathbb{Q}^{\times 2}$. This is independent of $\Phi$ by Proposition \ref{prop:deg=regulator_constant}(3).
\end{definition}

When the $G$-representation $\Omega^{1}(\JX)$ is realisable over $\mathbb{Q}$, we view $\mathcal{C}_{\Theta}(\Omega^{1}(\JX))$ as an element of $\mathbb{Q}^{\times}/\mathbb{Q}^{\times 2}$ via  Remark \ref{remark_descend_representations}.

\begin{lemma} \label{lemma_correction_term_rational}
Suppose that $\Omega^{1}(\JX)$ is  realisable over $\mathbb{Q}$. Then, 
\[\ct(\Omega^{1}(\JX)) \equiv \mathcal{C}_{\Theta}(\Omega^{1}(\JX)) \bmod \mathbb{Q}^{\times 2}.\]
\end{lemma}

\begin{proof}
Let $\Phi$ be any $G$-map realising $\Theta$. Then,
\begin{equation*}\ct(\Omega^{1}(\JX)) \equiv  \textup{deg}(f_{\Phi}) \overset{\textup{Prop. \ref{prop:deg=regulator_constant}(2)}}\equiv {\mathcal{C}_{\Theta}^{\mathcal{B}, \Phi^{\vee}\mathcal{B}}(\Omega^{1}(\JX))} \!\overset{\textup{Thm. \ref{Theorem: Independence of pairing}(1)}}\equiv \! {\mathcal{C}_{\Theta}(\Omega^{1}(\JX))} \mod \mathbb{Q}^{\times 2}. \qedhere\end{equation*}
\end{proof}

\begin{remark} 
For the proof of Theorem \ref{thm:introjacobian} we will restrict to $G = C_2\times C_2$, $D_8$ or $D_{2p}$ for a prime $p$. In these cases $\Omega^{1}(\JX)$ is realisable over $\mathbb{Q}$ by \cite[Lem. 5.10(2)]{Etale_paper},  and so the description of $\ct(\Omega^{1}(\JX))$ given above applies.
\end{remark}

\begin{definition} \label{def:lemma:local_invariants} 
Let $\mathcal{K}$ be a local field of characteristic $0$, $X/\mathcal{K}$ be a curve, $G$ be a finite subgroup of $\Aut_\K(X)$ and $\Theta=\sum_{i} H_{i} - \sum_{j} H_{j}'$  be a pseudo Brauer relation for $X$.
Fix bases $\mathcal{B}_{1}$, $\mathcal{B}_{2}$ for $\Omega^{1}(\prod_{i}  \Jac_{X/H_i})$, $\Omega^{1}(\prod_{j} \Jac_{X/H_j'})$ and write $\omega(\mathcal{B}_{1})$, $\omega(\mathcal{B}_{2})$ for the exterior forms given by the wedge product of the elements in $\mathcal{B}_{1}$, $\mathcal{B}_{2}$ respectively. Assuming that $\Omega^{1}(\textup{Jac}_{X})$ is self-dual as a $\mathcal{K}[G]$-representation, we define
\[ \Lambda_{\Theta}(X/{\mathcal{K}})=  \frac{C( \prod_{i} \Jac_{X/H_{i}},\omega(\mathcal{B}_{1})) \ }{C( \prod_{j} \Jac_{X/H_{j}'},\omega(\mathcal{B}_{2})) } \cdot \frac{\prod_{i}\mu({X/H_{i}})}{\prod_{j}\mu( {X/H_{j}'})}\cdot  \
\Biggl{|}\sqrt{\frac{\ct(\Omega^{1}(\JX))}{\mathcal{C}_{\Theta}^{\mathcal{B}_1,\mathcal{B}_{2}}(\Omega^{1}(\JX))}}\Biggr{|}_{\mathcal{K}}.\]
For the definitions of $C$, $\mu$, $\ct$ and $\mathcal{C}_{\Theta}^{\mathcal{B}_1,\mathcal{B}_{2}}$ see Notation \ref{not:C}, and Definitions \ref{def:deficiency}, \ref{Def:correction_term}, \ref{def:regulator_constants_for_pseudo_brauer} respectively. In particular, $\Lambda_{\Theta}(X/\mathcal{K}) = \tilde{\lambda}_{\Theta}(X/\mathcal{K})\cdot {|} {\ct(\Omega^{1}(\JX))} {|}_{\mathcal{K}}^{{1}/{2}}.$
\end{definition}

\begin{theorem} \label{Thm:main_local_inv_thm}
\label{Theorem:Properties of local invariant} 
Let $X$ be a curve over a local field $\mathcal K$ of characteristic $0$, and $G$ be a finite subgroup of $\Aut_\K(X)$. Let $\Theta = \sum_i H_i - \sum_j H_j'$ be a pseudo Brauer relation for $X$, and suppose that $\Omega^{1}(\JX)$ is a self-dual $\mathcal K[G]$-representation. Then, the following hold.
\begin{enumerate}
\item $\Lambda_{\Theta}(X/\mathcal{K})$ is a rational number independent of $\mathcal{B}_{1}$, $\mathcal{B}_{2}.$
\item ${\Lambda}_{\Theta}(X/\mathcal{K})$ is independent of how $\Theta$ is expressed as a formal linear combination of conjugacy classes of subgroups of $G$. That is, for any subgroup $H$ of $G$, $${\Lambda}_{\Theta}(X/\mathcal{K}) = {\Lambda}_{\Theta+(H-H)}(X/\mathcal{K}).$$ 
\item For any $G$-map $\Phi$ realising $\Theta$,
\[\Lambda_{\Theta}(X/\mathcal{K})= \frac{\#\textup{coker} f_{\Phi}(\mathcal{K})  }{\#\textup{ker} f_{\Phi}(\mathcal{K}) } \cdot   \frac{\prod_{i}\mu (X/{H_{i}})}{\prod_{j}  \mu(X/{H_{j}'})}  \cdot \Bigl{|} \sqrt{\textup{deg}(f_{\Phi}) \cdot \ct(\Omega^{1}(\JX))}\Bigr{|}_{\mathcal{K}}. \]
\item If $\mathcal{K}=\mathbb{C}$, then $\Lambda_{\Theta}(X/\mathbb{C}) = \ct(\Omega^{1}(\JX))$.
\end{enumerate}
\end{theorem}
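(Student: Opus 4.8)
The plan is to reduce all three parts to the single case in which the second basis is a pullback of the first along a realisation of $\Theta$. Concretely, I would fix a $G$-module homomorphism $\Phi$ realising $\Theta$, pick an arbitrary basis $\mathcal{B}_{1}$ of $\Omega^{1}(\prod_{i}\Jac_{X/H_{i}})$, and take $\mathcal{B}_{2}=\Phi^{\vee}\mathcal{B}_{1}$ as in Notation~\ref{Notation:isogenies_from_perms}. Then Lemma~\ref{pullback-basis-lemma} computes $\lambda_{\Theta}^{\mathcal{B}_{1},\Phi^{\vee}\mathcal{B}_{1}}(X/\mathcal{K})$ as $\frac{\#\textup{coker}\,f_{\Phi}(\mathcal{K})}{\#\textup{ker}\,f_{\Phi}(\mathcal{K})}\cdot\frac{\prod_{i}\mu(X/H_{i})}{\prod_{j}\mu(X/H_{j}')}$, while Corollary~\ref{Regulator Constants and pullback basis} together with the computation inside the proof of Lemma~\ref{lemma:deg=regulator_constant} identifies $\mathcal{C}_{\Theta}^{\mathcal{B}_{1},\Phi^{\vee}\mathcal{B}_{1}}(\Omega^{1}(\JX))$ with $\pm 1/\deg f_{\Phi}$. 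Substituting these two identities into the definition of $\Lambda_{\Theta}(X/\mathcal{K})$ gives the formula of part~(2) at once. For the basis-independence claim in part~(1) I would simply note that Lemma~\ref{Temporary_constant_independent_of_B} already shows the quotient $\lambda_{\Theta}^{\mathcal{B}_{1},\mathcal{B}_{2}}(X/\mathcal{K})\big/\bigl|\sqrt{\mathcal{C}_{\Theta}^{\mathcal{B}_{1},\mathcal{B}_{2}}(\Omega^{1}(\JX))}\bigr|_{\mathcal{K}}$ is independent of $\mathcal{B}_{1},\mathcal{B}_{2}$, and that $\Lambda_{\Theta}(X/\mathcal{K})$ is this quotient multiplied by the basis-free number $|\ct(\Omega^{1}(\JX))|_{\mathcal{K}}^{1/2}$.

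Next I would establish the rationality assertion in part~(1) from the formula just obtained in part~(2). The factors $\frac{\#\textup{coker}\,f_{\Phi}(\mathcal{K})}{\#\textup{ker}\,f_{\Phi}(\mathcal{K})}$ and $\frac{\prod_{i}\mu(X/H_{i})}{\prod_{j}\mu(X/H_{j}')}$ are visibly positive rationals, so everything reduces to showing $\bigl|\sqrt{\pm\deg f_{\Phi}\cdot\ct(\Omega^{1}(\JX))}\bigr|_{\mathcal{K}}\in\mathbb{Q}$. The key input is Lemma~\ref{Lemma:Correction_term_works}: since $\ct(\Omega^{1}(\JX))\equiv\mathcal{C}_{\Theta}(\Omega^{1}(\JX))\equiv\mathcal{C}_{\Theta}^{\mathcal{B}_{1},\Phi^{\vee}\mathcal{B}_{1}}(\Omega^{1}(\JX))=\pm 1/\deg f_{\Phi}\bmod\mathcal{K}^{\times 2}$, the integer $\pm\deg f_{\Phi}\cdot\ct(\Omega^{1}(\JX))$ is, for the appropriate sign, a square in $\mathcal{K}^{\times}$. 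For non-archimedean $\mathcal{K}$ with residue field of size $q$, an integer that is a square in $\mathcal{K}^{\times}$ has even normalised valuation, so its square root has absolute value a power of $q$, hence rational; for $\mathcal{K}=\mathbb{C}$ the normalised absolute value is the square of the usual one, so $\bigl|\sqrt{N}\bigr|_{\mathbb{C}}=|N|\in\mathbb{Q}$ for $N\in\mathbb{Z}$; and for $\mathcal{K}=\mathbb{R}$ I would invoke Lemma~\ref{lemma: Properties of Omega1}(3), which makes $\Omega^{1}(\JX)$ realisable over $\mathbb{Q}$ and hence forces $\pm\deg f_{\Phi}\cdot\ct(\Omega^{1}(\JX))$ to be a square of a rational number, so that $\bigl|\sqrt{\cdot}\bigr|_{\mathbb{R}}\in\mathbb{Q}$. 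In every case this yields $\Lambda_{\Theta}(X/\mathcal{K})\in\mathbb{Q}^{\times}$.

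For part~(3) I would specialise the formula of part~(2) to $\mathcal{K}=\mathbb{C}$ and evaluate each ingredient. Over an algebraically closed field $f_{\Phi}$ is surjective on points and its whole kernel is rational, so $\#\textup{coker}\,f_{\Phi}(\mathbb{C})=1$ and $\#\textup{ker}\,f_{\Phi}(\mathbb{C})=\deg f_{\Phi}$; moreover every curve over $\mathbb{C}$ has a rational point and so is not deficient, whence $\mu(X/H_{i})=\mu(X/H_{j}')=1$ for all $i,j$. Finally $|z|_{\mathbb{C}}=z\bar z$ gives $\bigl|\sqrt{\pm\deg f_{\Phi}\cdot\ct(\Omega^{1}(\JX))}\bigr|_{\mathbb{C}}=\deg f_{\Phi}\cdot|\ct(\Omega^{1}(\JX))|$, and multiplying the three factors collapses to $\Lambda_{\Theta}(X/\mathbb{C})=\frac{1}{\deg f_{\Phi}}\cdot\deg f_{\Phi}\cdot|\ct(\Omega^{1}(\JX))|=\ct(\Omega^{1}(\JX))$, where the last equality uses the sign convention that $\ct$ is taken positive.

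I do not expect a serious obstacle: parts~(2) and~(3) are essentially bookkeeping once the pullback basis is chosen, and the isogeny and deficiency behaviour over $\mathbb{C}$ is standard. The one point requiring care is the rationality argument in part~(1), and within it the archimedean cases — it is precisely over $\mathbb{R}$ that one needs $\pm\deg f_{\Phi}\cdot\ct(\Omega^{1}(\JX))$ to be a square of a \emph{rational} (not merely a real) number, which is why the realisability statement Lemma~\ref{lemma: Properties of Omega1}(3) must be brought in there — together with a modest amount of tracking of the ultimately harmless signs introduced by the square roots and by the definition of $\ct$.
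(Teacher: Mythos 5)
Your proposal is correct and follows essentially the same route as the paper: reduce to the pullback basis $\mathcal{B}_2=\Phi^{\vee}\mathcal{B}_1$ via Lemma \ref{Temporary_constant_independent_of_B}, combine Lemma \ref{pullback-basis-lemma} with the exact identity $\mathcal{C}_{\Theta}^{\mathcal{B}_1,\Phi^{\vee}\mathcal{B}_1}(\Omega^{1}(\JX))=\pm 1/\deg f_{\Phi}$ from Lemma \ref{lemma:deg=regulator_constant} to get (2), specialise to $\mathbb{C}$ for (3), and prove rationality case by case using Lemma \ref{Lemma:Correction_term_works} in the non-Archimedean case and Lemma \ref{lemma: Properties of Omega1}(3) over $\mathbb{R}$. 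Your extra remarks (that $\mu=1$ and the kernel is fully rational over $\mathbb{C}$, and the sign convention for $\ct$) only make explicit points the paper leaves implicit.
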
 

\begin{proof} 
Independence of the bases $\mathcal{B}_{1}$, $\mathcal{B}_{2}$ follows from Lemma \ref{lemma_tilde_lambda}(1). (2) follows from Lemma \ref{lemma_tilde_lambda}(2), provided we show that $\mathcal{C}_{\Theta}^{\textup{sf}} = \mathcal{C}_{\Theta'}^{\textup{sf}}$ where $\Theta' = \Theta+(H-H)$. Let $\Phi$ be any $G$-map realising $\Theta$, and consider $\Phi' := \Phi \oplus \textup{id}$, where $\textup{id}$ is the identity on $\mathbb{Z}[G/H]$. Then, $\Phi'$ realises $\Theta'$ and $f_{\Phi'}= f_{\Phi} \times \textup{id}|_{\textup{Jac}_{X/H}}$. Therefore, $\mathcal{C}_{\Theta'}^{\textup{sf}} \equiv \textup{deg}(f_{\Phi'}) = \textup{deg}(f_{\Phi}) \equiv \mathcal{C}_{\Theta}^{\textup{sf}} \bmod \mathbb{Q}^{\times 2},$ and so they must be equal. (3) follows from Lemma \ref{lemma_tilde_lambda}(4), and (4) follows from (3) since $\textup{coker}f_{\Phi}(\mathcal{K})$ is trivial when $\mathcal{K} = \mathbb{C}$. It remains to show that $\Lambda_{\Theta}$ is rational, to complete the proof of (1). By Proposition \ref{prop:deg=regulator_constant}(3), $\textup{deg}(f_{\Phi}) \cdot \ct \in \mathbb{Q}^{\times 2}$ independently of $\Phi$, and so rationality follows from (3).
\end{proof}

We are now able to prove Theorem \ref{Theorem: Local Formula}.

\begin{proof}[Proof of Theorem \ref{Theorem: Local Formula}] 
We note that $\Lambda_{\Theta}(X/{K}_v) = \lambda_{\Theta,\Phi}(X/{K}_v) \cdot {|}\textup{deg}(f_{\Phi}) \cdot \ct(\Omega^{1}(\JX)){|}_{v}^{1/2}$, and so $\prod_{v} \Lambda_{\Theta}(X/K_v)=\prod_{v} \lambda_{\Theta, \Phi}(X/K_v)$ since  $\textup{deg}(f_{\Phi}) \cdot \ct \in \mathbb{Q}^{\times 2}$. By Theorem \ref{Theorem:Properties of local invariant}(1), $\ord_p  \prod_{v} \Lambda_{\Theta}(X/K_v) =\sum_v \ord_p \Lambda_{\Theta}(X/K_v)$ and the result then follows by Theorem \ref{Theorem: Local_Formula_dependent_on_differnetials}.
\end{proof}

Unfortunately, one drawback of the invariant $\Lambda_\Theta$ is that it is generally not multiplicative in $\Theta$ (unlike $\tilde \lambda_\Theta$, see Lemma \ref{lemma_tilde_lambda}(3)). This is the case since generally $\mathcal{C}_{\Theta_1+\Theta_2}^{\textup{sf}}\neq \mathcal{C}_{\Theta_1}^{\textup{sf}} \cdot \mathcal{C}_{\Theta_2}^{\textup{sf}}$. 

\begin{lemma} \label{lem:no_multiplicativity}
Let $\Theta, \Theta_1, \Theta_2$ be pseudo Brauer relations for $X$. Then,
\begin{enumerate}
\item $\frac{\Lambda_{\Theta_1+\Theta_2}(X/\mathcal{K})}{\Lambda_{\Theta_1}(X/\mathcal{K})\Lambda_{\Theta_2}(X/\mathcal{K})}  = \Biggl{|}\sqrt{\frac{\cC_{\Theta_1+\Theta_2}^{\textup{sf}}(\Omega^{1}(\JX))}{\cC_{\Theta_1}^{\textup{sf}}(\Omega^{1}(\JX))\cC_{\Theta_2}^{\textup{sf}}(\Omega^{1}(\JX))}}\Biggr{|}_{\mathcal{K}},$ 
\item $\Lambda_{\Theta}(X/\mathcal{K}) \Lambda_{-\Theta}(X/\mathcal{K})= |\ct(\Omega^{1}(\JX)) |_{\mathcal{K}}$.
\end{enumerate} 

\end{lemma}
\begin{proof} 
(1) follows from Lemma \ref{lemma_tilde_lambda}(3). For (2), let $\Psi$ be the trivial pseudo Brauer relation (i.e. one of the form $\sum_{i} H_{i} - \sum_{i} H_{i}$). Then, $\Lambda_{\Psi}(X/\mathcal{K})=\mathcal{C}_{\Psi}^{\textup{sf}}(\Omega^{1}(\JX))=1$. By part (1), we get
\[\Lambda_{\Theta}(X/\mathcal{K})\Lambda_{-\Theta}(X/\mathcal{K})=  \
\Bigl{|}\sqrt{{\ct(\Omega^{1}(\JX))\CT(\Omega^{1}(\JX))}}\Bigr{|}_{\mathcal{K}}. \] 
Let $\Phi$ be any $G$-map realising $\Theta$. Then $\Phi^{\vee}$ realises $-\Theta$ and $\ct \equiv \textup{deg}(f_{\Phi})= \textup{deg}(f_{\Phi}^{\vee}) \overset{\textup{Thm. \ref{isogeny_from_pseudo_brauer}(3)}}= \textup{deg}(f_{\Phi^{\vee}})  \equiv  \CT \textup{ mod } \mathbb{Q}^{\times 2}.$ Since $\ct$ and $\CT$ are square-free integers, they must be equal. 
\end{proof}

The following lemma provides a simplification of $\Lambda_{\Theta}(X/\mathcal{K})$ when $\mathcal{K}$ is non-archimedean. 
\begin{lemma} \label{local_term_neron_basis}
Let $\mathcal K$ be a non-archimedean local field of characteristic $0$. Let $\mathcal{J}_{H}$ be the N\'{e}ron model of $\Jac_{X/H}$ over $\mathcal{O}_{\mathcal{K}}$ and let $ \mathcal{B}(\mathcal{J}_{H})$ be a basis for the $\mathcal{O}_{\mathcal{K}}$-module $\Omega^{1}(\mathcal{J}_{H})$. Letting $\mathcal{N}_{1} = \bigsqcup_i \mathcal{B}(\mathcal{J}_{H_{i}})$ and $\mathcal{N}_{2} = \bigsqcup_j \mathcal{B}(\mathcal{J}_{H_{j}'})$, 
\[\Lambda_{\Theta}(X/\mathcal{K}) = 
    \frac{\prod_{i} c(\Jac_{X/H_i}) \cdot \mu (X/{H_{i}})}{\prod_{j} c(\Jac_{X/H_{j}'}) \cdot \mu(X/{H_{j}'})}  \cdot
\
\Biggl{|}\sqrt{\frac{\ct(\Omega^{1}(\JX))}{\mathcal{C}_{\Theta}^{\mathcal{N}_1,\mathcal{N}_{2}}(\Omega^{1}(\JX))}}\Biggr{|}_{\mathcal{K}}.\]
\end{lemma}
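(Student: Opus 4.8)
The plan is to reduce the stated formula to Definition \ref{def:lemma:local_invariants} together with Theorem \ref{Thm:main_local_inv_thm}(1), which guarantees that $\Lambda_\Theta(X/\mathcal{K})$ may be computed with respect to any choice of bases $\mathcal{B}_1$, $\mathcal{B}_2$. In particular, taking $\mathcal{B}_1 = \mathcal{N}_1$ and $\mathcal{B}_2 = \mathcal{N}_2$, it remains only to identify the factor $\lambda_\Theta^{\mathcal{N}_1,\mathcal{N}_2}(X/\mathcal{K})$ with $\frac{\prod_i c(\Jac_{X/H_i}) \cdot \mu(X/H_i)}{\prod_j c(\Jac_{X/H_j'}) \cdot \mu(X/H_j')}$. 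Since $\lambda_\Theta^{\mathcal{N}_1,\mathcal{N}_2}$ is defined (Definition \ref{def:local_invariants}) as the ratio of the quantities $C(\prod_i \Jac_{X/H_i}, \omega(\mathcal{N}_1))$ and $C(\prod_j \Jac_{X/H_j'}, \omega(\mathcal{N}_2))$ times the ratio of deficiency terms, and as the deficiency terms already appear in the desired formula, the claim comes down to showing $C(\prod_i \Jac_{X/H_i}, \omega(\mathcal{N}_1)) = \prod_i c(\Jac_{X/H_i})$, and similarly for the primed side.

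The key computation is therefore the following: for a non-Archimedean local field $\mathcal{K}$ and an abelian variety $A/\mathcal{K}$ with N\'eron model $\mathcal{A}$, if $\mathcal{B}(\mathcal{A})$ is an $\mathcal{O}_{\mathcal{K}}$-basis of $\Omega^1(\mathcal{A})$ and $\omega^0 = \omega(\mathcal{B}(\mathcal{A}))$ is the associated N\'eron exterior form, then $C(A, \omega^0) = c(A) \cdot |\omega^0/\omega^0|_{\mathcal{K}} = c(A)$, directly from the definition of $C_v(A,\omega)$ recorded in \S\ref{General_notation}. Applying this to each factor $\Jac_{X/H_i}$ (with $\mathcal{B}(\mathcal{J}_{H_i})$ a basis of $\Omega^1(\mathcal{J}_{H_i})$) and using multiplicativity of the Haar-measure normalisation constant under products of abelian varieties --- namely $C(A_1 \times A_2, \omega_1 \wedge \omega_2) = C(A_1,\omega_1) \cdot C(A_2,\omega_2)$, which follows since the N\'eron model of a product is the product of the N\'eron models and Tamagawa numbers are multiplicative in products --- yields $C(\prod_i \Jac_{X/H_i}, \omega(\mathcal{N}_1)) = \prod_i c(\Jac_{X/H_i})$. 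The same applies verbatim to $\mathcal{N}_2$ and the primed quotients.

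Assembling these gives
\[
\Lambda_\Theta(X/\mathcal{K}) = \lambda_\Theta^{\mathcal{N}_1,\mathcal{N}_2}(X/\mathcal{K}) \cdot \Biggl| \sqrt{\frac{\ct(\Omega^1(\JX))}{\mathcal{C}_\Theta^{\mathcal{N}_1,\mathcal{N}_2}(\Omega^1(\JX))}} \Biggr|_{\mathcal{K}} = \frac{\prod_i c(\Jac_{X/H_i}) \cdot \mu(X/H_i)}{\prod_j c(\Jac_{X/H_j'}) \cdot \mu(X/H_j')} \cdot \Biggl| \sqrt{\frac{\ct(\Omega^1(\JX))}{\mathcal{C}_\Theta^{\mathcal{N}_1,\mathcal{N}_2}(\Omega^1(\JX))}} \Biggr|_{\mathcal{K}},
\]
as required. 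I expect no serious obstacle here: the only mild points of care are making sure the wedge $\omega(\mathcal{N}_1) = \omega(\mathcal{B}(\mathcal{J}_{H_1})) \wedge \cdots$ is indeed a N\'eron exterior form for the product N\'eron model (so that its local normalising factor is $|\omega(\mathcal{N}_1)/\omega(\mathcal{N}_1)|_{\mathcal{K}} = 1$), and that the rationality and basis-independence of $\Lambda_\Theta(X/\mathcal{K})$ already established in Theorem \ref{Thm:main_local_inv_thm}(1) justify evaluating with this particular choice of bases rather than some generic one. Both are immediate from the properties of N\'eron models and the cited results.
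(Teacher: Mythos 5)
Your proposal is correct and follows essentially the same route as the paper: the paper's proof is exactly the observation that for a N\'eron basis $\mathcal{B}(\mathcal{J}_H)$ the wedge $\omega(\mathcal{B}(\mathcal{J}_H))$ is a N\'eron exterior form, so $C(\Jac_{X/H},\omega(\mathcal{B}(\mathcal{J}_H)))=c(\Jac_{X/H})$, and then one evaluates Definition \ref{def:lemma:local_invariants} with the bases $\mathcal{N}_1,\mathcal{N}_2$, using the basis independence already established. Your added remarks on multiplicativity over products and on the product of N\'eron models simply spell out details the paper leaves implicit.
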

\begin{proof}
For a  N\'eron basis $\mathcal{B}(\mathcal{J}_{H})$, the exterior form $\omega(\mathcal{B}(\mathcal{J}_{H}))$ on $\Jac_{X/H}$ coincides with the exterior form on the N\'{e}ron model of $\mathcal{J}_{H}$ over $\mathcal{O}_{\mathcal{K}}$.
\end{proof}

\section{Parity conjecture for elliptic curves}\label{Sec_ellcurves} 

Here we compare the local terms appearing in Example \ref{ex:S3part1} to the corresponding root numbers. This gives a new proof of the parity conjecture for elliptic curves over number fields, assuming finiteness of Tate--Shafarevich groups. 

\begin{theorem}\label{thm:mainthm}
 Let  $E: y^2 = x^3+ax+b$ be an elliptic curve over a number field $K$ with $a\neq 0$.  Define the elliptic curve $E':y^2=x^3-27bx^2-27a^3x$. If $\Sha(E)$ has finite $3$-primary part and $\Sha(E')$ has finite $2$- and $3$-primary parts, then the parity conjecture holds for $E$.
\end{theorem}

\begin{remark}\label{rem:3isogeny-case} An elliptic curve $E: y^2 = x^3+b$ admits a $3$-isogeny and so, assuming that the $3$-primary part of $\Sha(E)$ is finite, the parity conjecture is known to hold by \cite[Theorem 1.8]{kurast}.
\end{remark}

The proof of Theorem \ref{thm:mainthm} uses the theory of covers of curves developed in \S\S\ref{sec:isogenies}-\ref{sec:Rank_Parity}, as opposed to the proof given in \cite[Theorem 1.2]{kurast} which comes from studying elliptic curves over extensions of number fields. We adhere to the following notation.

\begin{notation} Let $E:y^2 = f(x):= x^3+ax+b$ be an elliptic curve over a field $K$ with $a\neq0$. Let $g(y^2) = -27y^4+54by^2-(4a^3+27b^2)\in K[y]$ be the discriminant of $f(x)-y^2$. The {\em discriminant curve} is $D:\Delta^2 = g(y^2)$ and the {\em bihyperelliptic cover} of $E$ is  $B:\{y^2 = f(x),\,\Delta^2 = g(y^2)\}.$
\end{notation}

The coefficient $a$ being non-zero ensures that the genus of $D$ is 1 and that $K(B)/K(y)$ is an $S_3$-Galois extension (c.f.  Example \ref{ex:S3part1}). In particular, $S_3$ acts on $B$ by automorphisms.   
Example \ref{ex:S3part1}  asserts that $B$ has genus 3  (we call it `the bihyperelliptic cover of $E$' because it has a bihyperelliptic model), that $\textup{Jac}_D$ is the elliptic curve $E'$ appearing in the statement of Theorem \ref{thm:mainthm}, the existence of an isogeny involving $E$, $\Jac_D$ and $\Jac_B$, and a local formula for the parity of the rank of $E\times\Jac_D$.

\begin{theorem}\label{thm:localformula} Let $E:y^2 = x^3+ax+b$ be an elliptic curve over a number field $K$ with $a\neq 0$, let $D$ be its discriminant curve and let $B$ be its bihyperelliptic cover. Then,
\begin{equation*}
  \rk_3 E + \rk_3 \Jac_D ~\equiv~ \sum_{v\textup{ place of }K}\ord_3\Lambda_\Theta(B/K_v) \mod 2
\end{equation*}
where $\Theta = 2C_2 +C_3-2S_3 -\{1\}$.\end{theorem}

\begin{proof} 
This follows from Theorem \ref{thm:introgaloisdescent}(4) and Theorem \ref{Theorem: Local Formula}, via an identical argument to that given in Example \ref{ex:S3part1}.
\end{proof}

To deduce the $3$-parity conjecture for $E\times \Jac_D$ it remains to suitably relate $\Lambda_\Theta(B/K_v)$ to the product  of local root numbers  $w({E/K_v})w({\Jac_D/K_v})$.  

\begin{theorem}\label{thm:localthm} Let $E:y^2 = x^3+ax+b$ be an elliptic curve over a local field $\K$ of characteristic 0 with $a\neq 0$, let $D$ be its discriminant curve and let $B$ be its bihyperelliptic cover. 
Then, 
$$ (-1)^{\ord_3\Lambda_\Theta(B/\K)}=w({E/\K})w({\Jac_D/\K}) $$ where $\Theta = 2C_2 +C_3-2S_3 -\{1\}$.\end{theorem}

Before proving this relationship, we note some immediate consequences. For the following result only, we allow for the possibility that $a=0$.

\begin{theorem}\label{thm:3PC} Let $E:y^2 =x^3+ax+b$ be an elliptic curve over a number field $K$ and $D$ its discriminant curve. The $3$-parity conjecture holds for $E\times \Jac_D$.
\end{theorem}

\begin{proof}

If $a=0$ then $\Jac_D=0$ and the Theorem holds by Remark \ref{rem:3isogeny-case}.

If $a\neq 0$ 
then Theorem \ref{thm:localthm} says that $(-1)^{\ord_3\Lambda_\Theta(B/K_v)}=w({E/K_v})w({\Jac_D/K_v})$ for each place $v$ of $K$. Taking the product over all places and invoking Theorem \ref{thm:localformula} gives the result.
\end{proof} 

\begin{proof}[Proof of Theorem \ref{thm:mainthm}] 
As above,  $E'=\textup{Jac}_D$. By Theorem \ref{thm:3PC}, the $3$-parity conjecture holds for $E\times E'$. Since $E'$ has a $2$-isogeny over $K$, the $2$-parity conjecture holds for $E'$ by \cite[Theorem 1.8]{kurast}. Together, these statements give the result.
\end{proof}

\begin{remark}\label{rem:psicompat} Let $\Psi$ be the Brauer relation for $C_2\times C_2$ identified in Example \ref{ex:C2C2rankrecipe}. Then,
$$\prod_{v \textup{ place of }K} (-1)^{\ord_2\Lambda_\Psi(D/ K_v)}w(\Jac_D/ K_v) \overset{\textup{Ex. \ref{ex:C2C2rankrecipe}}}{=} (-1)^{\rk_2(\Jac_D)}w(\Jac_D)\overset{\textup{\cite[Thm. 1.8]{kurast}}}{=}  1.$$
The explicit relationship between $(-1)^{\ord_2\Lambda_\Psi(D/ K_v)}$ and $w(\Jac_D/ K_v)$ appears to be complicated.
\end{remark}

We now prove Theorem \ref{thm:localthm}.

\begin{notation}Let $\sigma$, $\tau\in S_3$ have orders $3$ and $2$ respectively, with $K(E) = K(B)^{\langle \tau\rangle}$ and $K(D) = K(B)^{\langle \sigma\rangle}$. Let $\Phi$ denote the injective $S_3$-module homomorphism
\begin{align*}
\Z[S_3/\langle \tau\rangle]y_1\oplus \Z[S_3/\langle \tau\rangle]y_2\oplus\Z[S_3/\langle \sigma\rangle]y_3&\longrightarrow  \Z x_1\oplus \Z x_2\oplus\Z[S_3]x_3
\end{align*}
given by $y_1 \mapsto  x_1+(1+\tau) x_3$, $y_2 \mapsto x_1+x_2+(\sigma+\tau\sigma) x_3$ and $y_3 \mapsto x_1 + x_2 +(1+\sigma+\sigma^2)x_3$.
By Theorem \ref{isogeny_from_pseudo_brauer}, there are   isogenies
\[  f_{\Phi}:\Jac_B \to E\times E\times \Jac_{D}\quad \textup{ and }\quad 
      f_{\Phi^\vee}=f_{\Phi}^\vee:E\times E\times \Jac_{D} \to \Jac_B.\]
 From the explicit construction of these isogenies given in \cite[\S4.3]{Etale_paper}, we see that
\[f_\Phi=(\pi_{E*}, \pi_{E*}\circ \sigma_*,\pi_{D*})\quad \textup{ and }\quad f_\Phi^\vee=\pi_E^*+\sigma^*\circ \pi_E^*+\pi_D^*,\]
    where $\pi_E:B\to E$, $\pi_D:B\to D$ denote the quotient maps.
\end{notation}

\begin{lemma}\label{lem:isogdeg} 
We have $\deg f_\Phi= \deg f_\Phi^\vee=9$ and
$\ker(f_{\Phi})=\pi_D^*(\textup{Jac}_D[3])$. The first projection $E\times E\times \textup{Jac}_B\rightarrow E$ induces an isomorphism  of Galois modules $\ker(f_{\Phi}^\vee)\iso  E[3]$.
\end{lemma}

\begin{proof} Let $P, Q\in E$ and $R\in \Jac_D$. Then $(f_\Phi\circ f_\Phi^\vee)(P,Q,R) = \big(2P-Q, 2Q-P, 3R\big)$. For the first entry, this follows using that ${\pi_E}_*\circ\pi_E^* = [2]$, ${\pi_E}_*\circ\sigma^*\circ\pi_{E}^*=[-1]$ and ${\pi_E}_*\circ\pi_D^* = 0$, and similarly for the second entry. For the third entry, we additionally use that ${\pi_D}_*\circ\pi_D^* = [3]$. We therefore deduce that \[\ker (f_\Phi\circ f_\Phi^\vee) = \{(P,-P,R): P\in E[3], \,R\in \Jac_D[3]\}\] and 
$\deg f_\Phi= \deg f_\Phi^\vee=9$. Further, the identities above show that $\pi_D^*(\textup{Jac}_D[3])$ is contained in $\ker(f_\Phi).$ By \cite[Lemma 6]{BP01}, since $B\rightarrow D$ has no non-trivial unramified subcover, $\pi_D^*:\textup{Jac}_D\rightarrow \textup{Jac}_B$ is injective. Since $\deg f_\Phi=9$ this gives $\ker(f_{\Phi})=\pi_D^*(\textup{Jac}_D[3])$. Finally, the description of $\ker (f_\Phi\circ f_\Phi^\vee)$ given above shows that the first projection induces a map $\alpha:\ker(f_\Phi^\vee) \rightarrow E[3]$ whose kernel is contained in $0\times 0\times \textup{Jac}_D[3]$. Since $\pi_D^*$ is injective,  $0\times 0\times \textup{Jac}_D[3]$ and $\ker(f_\Phi^\vee)$ have trivial intersection. Thus $\alpha$ is injective. Since $\deg(f_\Phi^\vee)=9$, $\alpha$ is an isomorphism.
\end{proof}

\begin{remark}
The proof of Lemma \ref{lem:isogdeg} shows that $\pi_D^*$ is injective. We conclude that $\ker(f_\Phi)\simeq \textup{Jac}_D[3]$ and $\ker(f_\Phi^\vee)\simeq E[3]$ as Galois modules. Since $E[3]$ is its own Cartier dual, we have an isomorphism of Galois modules $\textup{Jac}_D[3]\simeq E[3]$. 
\end{remark}

\begin{remark} \label{lem:Csf_example}
Since $\deg f_\Phi=9$, the `correction term' $\ct(\Omega^{1}(\Jac_B))$ of Definition \ref{Def:correction_term} is equal to $1$.
\end{remark}

\begin{proposition}\label{prop:S3C} 
Theorem \ref{thm:localthm} holds when $\K =\mathbb{C}$ or $\mathbb{R}$.
\end{proposition}

\begin{proof}
We have $w({E/\K})=w({\Jac_D/\K})=-1$. The case $\K=\mathbb{C}$ then follows from Theorem \ref{Thm:main_local_inv_thm} and Remark \ref{lem:Csf_example}. Henceforth, we suppose that $\K=\mathbb{R}$.

 In the notation of Definition \ref{def:local_invariants}, we have $\ord_3\#\textup{coker} f_{\Phi}^\vee(\mathcal{K}) = 0$ and $\#\textup{ker} f_{\Phi}^\vee(\mathcal{K}) = 3$ (by Lemma \ref{lem:isogdeg}). We deduce that
\begin{equation*}\ord_3\Lambda_{\Theta}(B/\mathcal{K})  \overset{ \textup{Lem. \ref{lem:no_multiplicativity}(2)  } }{=}-\ord_3\Lambda_{-\Theta}(B/\mathcal{K}) \overset{\substack{\textup{Thm. \ref{Thm:main_local_inv_thm}(3) \&}\\\textup{Lem. \ref{lem:isogdeg}}}}{=} 
-\ord_3(1) ~=~ 0.\qedhere
\end{equation*}
  \end{proof}

 \begin{notation}
In the computations that follow, we will exploit the existence of a degree $8$ isogeny 
$\eta:\Jac_B\to E\times\Jac_C $
where $C: z^2 = -(3x^2+4a)(x^3+ax+b)$ has genus $2$ (cf. \cite[\S2.2]{bruin}).  Briefly, making the change of variables $t=\Delta/(3x^2+a)$, one sees that $B$ can be given by the equations $\{y^2=f(x), t^2=-(3x^2+4a)\}$. We have a degree $2$ morphism $\pi_C:B\rightarrow C$ given by $(x,y,t)\mapsto (x,ty)$. Then $\eta=(\pi_{E*}, \pi_{C*})$ gives the sought isogeny (to compute its degree, note that $\eta^\vee=\pi_E^*+\pi_C^*$ and $\eta\circ \eta^\vee= 2$). 
\end{notation}

\begin{remark}
One can also construct the isogeny $\eta$ via Theorem \ref{isogeny_from_pseudo_brauer}, using the Brauer relation $\Psi$ of Example \ref{ex:C2C2rankrecipe} in the $C_2\times C_2$ extension $K(B)=K(x,y,t)/K(x)$. 
\end{remark}
 
Recall that $c(A)$ denotes the Tamagawa number of an abelian variety $A$ defined over a non-archimedean local field $\mathcal K$. 

\begin{proposition}\label{prop:S3Qpss}
Theorem \ref{thm:localthm} holds when $\K/\Q_p$ is finite with $p\neq 2,3$, and $E\times\Jac_D/\K$ is semistable.
\end{proposition}

\begin{proof} After a change of variable, we may assume that $a,b\in \mathcal O_\K$. 
Let $\pi$ be a uniformiser for $\K$, and write $v$ for the normalised valuation on $\overline\K$ (so that $v(\pi) = 1$).

Since the isogeny $\eta:\textup{Jac}_B\rightarrow E\times \textup{Jac}_C$ has degree coprime to $3$, it suffices to show that $$(-1)^{\ord_3(c(\Jac_D)c(E)c(\Jac_C))} = w(E/\K)w(\Jac_D/\K).$$
Indeed, $c({\Jac_B}) = 2^n c(E) c({\Jac_C})$ for some $n\in\Z$, by an easy generalisation of \cite[Lemma 6.2]{dokchitser2015local}. 

We remind the reader that an elliptic curve $\mathcal E/\K$ of type $I_n$ has $c(\mathcal E) = n$, $w(\mathcal E) = -1$ in the case of split multiplicative reduction, and $c(\mathcal E)=\textup{gcd}(n,2)$, $w(\mathcal E) = 1$ otherwise (see \cite{MR0393039}, \cite[\S 19]{Rohrlich}).
We will compute $c(\Jac_C)$ using the theory of cluster pictures (see \cite[Definitions 1.1 \& 1.13, Tables 1 \& 3]{DDMM18}).

As above, $\Jac_D$ is given by the Weierstrass equation $Y^2 = X^3-27bX^2-27a^3X$. Let $d :=4a^3+27b^2$, then $\Delta_E = -2^4\cdot d$, $\Delta_{\Jac_D} =2^4\cdot 3^9 \cdot a^6 d$ and $\Delta_C = 2^{12}\cdot 3^3 \cdot a  d^3$. Since $E$ is semistable, we may assume that if  $v(a)>0$  then $v(b)=0.$

\textbf{Suppose {$a,d$ are units}.} $E$, $\Jac_D$ and $\Jac_C$ have good reduction, so $c(\Jac_D)=c(E)=c(\Jac_C)=1$ and $w(E/\K) = w(\Jac_D/\K)= 1$.

\textbf{Suppose {$a\equiv 0 \pmod \pi$}}. Then $b$, $d$ are units, so $E$ has good reduction and $c(E) = 1$, $w(E/\K) = 1$. The reductions of $\Jac_D$ and $C$ are $Y^2 = X^2(X-27b)$ of type $I_{6v(a)}$, and $z^2 = -3x^2(x^3+b)$ with cluster picture for $C$ given by $\scalebox{0.8}{\clusterpicture            
  \Root[] {1} {first} {r2}
  \Root[] {1} {r2} {r3}
  \ClusterLD c1[\epsilon][v(a)] = (r2)(r3);
    \Root[] {} {c1} {r4}
  \Root[] {} {r4} {r5}
  \Root[] {} {r5} {r6}
  \ClusterD c3[0] = (c1)(r4)(r5)(r6);
\endclusterpicture}$. If $-3b$ is a square modulo $\pi$ then $\epsilon = +$ and $c(\Jac_D)=6v(a)$, $c(\Jac_C) = 2v(a)$, $w(\Jac_D/\K) = -1$. Otherwise, $\epsilon = -$ and $c(\Jac_D) =c(\Jac_C) = 2$, $w(\Jac_D/\K) = 1$. 

\textbf{Suppose {$d\equiv 0 \pmod \pi$}}. Then $a$, $b$ are units. The reductions of $E$, $\Jac_D$ and $C$ are $y^2 = (x-\frac{3b}{a})(x+\frac{3b}{2a})^2$, $Y^2 = X(X-\frac{27}{2}b)^2$ both of type $I_{v(d)}$, and $z^2 = -3(x-\frac{3b}{a})^2(x+\frac{3b}{2a})^2(x+\frac{3b}{a})$ with cluster picture for $C$ given by
$\scalebox{0.8}{\clusterpicture            
  \Root[] {1} {first} {r2}
  \Root[] {1} {r2} {r3}
  \ClusterLD c1[\epsilon][v(d)] = (r2)(r3);
    \Root[] {1} {c1} {r4}
  \Root[] {} {r4} {r5}
    \ClusterLD c2[\epsilon][2v(d)] = (r4)(r5);
  \Root[] {} {c2} {r6}
  \ClusterD c3[0] = (c1)(c2)(r6);
\endclusterpicture}$.
If $6b$ is a square modulo $\pi$ then $\epsilon = +$ and $c(E) = c(\Jac_D)=v(d)$, $c(\Jac_C)=8v(d)^2$, $w(E/\K) = w(\Jac_D/\K) = -1$. Otherwise, $\epsilon = -$ and $c(E) = c(\Jac_D)=\textup{gcd}(v(d),2)$, $c(\Jac_C)=4$, $w(E/\K) = w(\Jac_D/\K) = 1$. 
\end{proof}

When $\mathcal{K}$ is a finite extension of $\mathbb{Q}_3$, the following result expresses $\ord_3\Lambda_\Theta(B/\K)$ in terms of invariants associated to the genus $1$ curves $E$ and $D$, and the genus $2$ curve $C$. This is used to establish the local constancy of $\ord_3\Lambda_\Theta(B/\K)$ given in Lemma \ref{lem:invariantcontinuity} below. In the statement, we use \cite[Proposition 2.2]{MR861976} to identify regular differentials on a curve $X/\mathcal{K}$ with differentials on $\JX$. We write  $\omega^0_X$  for the N\'{e}ron exterior form, viewed on $X$ via this identification. 

\begin{lemma}\label{lem:3adicterm} Let $\K/\Q_3$ be finite. Then, $$\ord_3\Lambda_\Theta(B/\K) = \ord_3\Big(\frac{c(E)c({\Jac_D})}{c({\Jac_C})} \Big|\frac{\gamma}{\alpha\beta} \Big|_\K\Big)$$
where $\alpha$, $\beta$, $\gamma\in \K$ are such that $\omega_E^0 = \alpha \frac{dx}{y}$, $\omega_{D}^0 = \beta \frac{dy}{\Delta}$ and $\omega_{C}^0 = \gamma( \frac{dx}{z}\wedge x\frac{dx}{z})$. 
\end{lemma}

\begin{proof} Let $\mathcal B_1 = \big\{(\frac{dx}{y},0,0), (0,\frac{dx}{y},0), (0,0,\frac{dy}{\Delta})\big\}$ be a basis for $\Omega^1(E\times E\times \Jac_D)$. Evaluating $\Lambda_\Theta$ with $\mathcal B_2 = \Phi^\vee\mathcal B_1$ and using Proposition \ref{prop:deg=regulator_constant}(2), Remark \ref{lem:Csf_example} and Lemma \ref{lem:isogdeg}, gives 
$$\ord_3\Lambda_\Theta(B/\K) ~=~ \ord_3\Big(\frac{c(E)^2c({\Jac_D})}{c({\Jac_B})} \Bigg|\frac{\omega(\mathcal B_1)}{\omega_{E\times E\times \Jac_D}^0} \cdot \frac{\omega_{\Jac_B}^0}{\omega(\Phi^\vee\mathcal B_1)}\Bigg|_\K| 3|_\K\Big)$$
where $\omega(\mathcal B_1)$ and $\omega(\Phi^\vee\mathcal B_1)$ denote the exterior forms on $E\times E\times \Jac_D$ and $\Jac_B$ obtained by taking the wedge product of the elements in $\mathcal B_1$ and $\Phi^\vee\mathcal B_1$ respectively. 

As in the proof of Proposition \ref{prop:S3Qpss},  the   isogeny $\eta:\textup{Jac}_B\rightarrow E\times \textup{Jac}_C$ forces  $\ord_3(c({\Jac_B})) = \ord_3(c(E) c({\Jac_C}))$. Similarly, a straightforward generalisation of \cite[Lemma 4.3]{dokchitser2015local} gives
$$\Bigg|\frac{\omega_{\Jac_B}^0}{\omega(\Phi^\vee\mathcal B_1)}\Bigg|_\K  ~=~ \Bigg|\frac{\omega_{\Jac_B}^0}{\eta^*\omega_{E\times \Jac_C}^0}\cdot\frac{\eta^*\omega_{E\times \Jac_C}^0}{\omega(\Phi^\vee\mathcal B_1)}\Bigg|_\K ~=~ \Bigg|\frac{\eta^*\omega_{E\times \Jac_C}^0}{\omega(\Phi^\vee\mathcal B_1)} \Bigg|_\K,$$
where $\omega(\Phi^\vee\mathcal B_1)=\frac{dx}{y}\wedge \sigma^2\frac{dx}{y}\wedge \frac{dy}{\Delta}= \pm\frac{3}{4}(\frac{dx}{y}\wedge \frac{dx}{z}\wedge x\frac{dx}{z})$. Using that Néron models respect products, we see that $\big|{\eta^*\omega_{E\times \Jac_C}^0}/{\omega(\Phi^\vee\mathcal B_1)} \big|_\K = |\alpha\gamma/3|_\K$ and $\big|\omega(\mathcal B_1)/\omega_{E\times E\times \Jac_D}^0\big|_\K = |1/\alpha^2\beta|_\K.$ This completes the proof.
\end{proof}

\begin{lemma}\label{lem:invariantcontinuity}
Let $E:y^2 = x^3+ax+b$ an elliptic curve with $a\neq0$ over a finite extension $\K/\Q_p$. There is an $\epsilon>0$ such that changing $a$, $b$ to any $a'\neq 0$, $b'$ with $|a-a'|_\K$, $|b-b'|_\K<\epsilon$ does not change $w({E/\K})$, $w({\Jac_D/\K})$ and $\ord_3\Lambda_\Theta(B/\K)$.
\end{lemma}

\begin{proof}
Root numbers are functions of $V_\ell E = T_\ell E \otimes \Q_\ell$, so their local constancy can be seen from that of the Tate module \cite[p. 569]{MR1680032}. The same argument applies to the $3$-part of the Tamagawa number of an abelian variety $A/\K$ when $p\neq 3$, since $\ord_3c({A}) = \ord_3\#(\Phi_A[3^\infty])^{\textup{Frob}_{\mathcal{K}}}$ and $\Phi_A[3^\infty]\iso H^1(I_\K, T_3(A))_\textup{tors}$ by \cite[\S11]{grothendieck1971modeles} ($\Phi_A$ denotes the component group of the special fibre of the N\'eron model of $A$ over $\mathcal O_\K$).

Now consider $\ord_3\Lambda_\Theta(B/\K)$ when $p=3$. By Lemma \ref{lem:3adicterm}, we need to show that $\ord_3 c(E)$, $\ord_3 c(\Jac_D)$, $\ord_3 c(\Jac_C)$ and $\ord_3|\gamma/\alpha\beta|_\K$ are all locally constant. For the terms concerning $E$ and $\Jac_D$ this follows from Tate's algorithm \cite{MR0393039}. For the terms concerning $\Jac_C$ we argue as in the proof of \cite[Lemma 11.2]{DM2019}. Specifically, each term can be encoded in terms of the special fibre of the minimal regular model of $C$: for Tamagawa numbers use \cite[\S1]{bosliu} and for  N\'{e}ron exterior forms use \cite[\S3]{vanB22}. The result is now a consequence of Liu's algorithm \cite{liumin}. 
\end{proof}

\begin{proposition}\label{prop:S3Qp}Theorem \ref{thm:localthm} holds for any finite extension $\K/\Q_p$.
\end{proposition}

\begin{proof} 
We will deduce the remaining cases from known instances of the $3$-parity conjecture. To do this, we approximate $f(x)$ by a separable cubic $f_0(x) = x^3+a_0x+b_0\in \mathcal O_L[x]$ with $a_0\neq0$ where $L$ is a totally real field, subject to certain conditions, and let $E_0:y^2 = f_0(x)$ with $D_0$ its discriminant curve and $B_0$ its bihyperelliptic cover.

To begin, we first verify Theorem \ref{thm:localthm} for the single elliptic curve $E: y^2 = x^3 -\frac{1}{3}x+\frac{35}{108}$ over $\mathbb{Q}_2$. This can be done by explicit computation; see \cite[Lemma 5.2.5]{hollythesis} for details. 

Now suppose $\K$ is a finite extension of  $\Q_3$. Let $L$ be a totally real number field with a unique prime $\mathfrak q\mid3$, that further satisfies $L_{\mathfrak q} \iso \K$, and is such that   $L_{\mathfrak{r}}=\mathbb{Q}_2$ for every prime $\mathfrak{r}\mid 2$ (to see that such a field exists, if $\K = \Q_3[x]/(h(x))$ for some monic $h(x)\in\Q_3[x]$, then approximate $h(x)$ by some $\tilde h(x) \in \Q[x]$ that splits completely over $\mathbb{R}$ and $\mathbb{Q}_2$; take $L = \Q[x]/(\tilde h(x))$). 

With $L$ fixed as above, choose $a_0$ (resp. $b_0$) in $O_L$ to be $\mathfrak q$-adically close to $a$ (resp. $b$), and $\mathfrak r$-adically close to $-\frac{1}{3}$ (resp. $\frac{35}{108}$)  for all $\mathfrak r\mid 2$. 
For primes $ \mathfrak p\nmid2,3$, we do this in such a way that $\mathfrak p\nmid b_0$ whenever $\mathfrak p\mid a_0$ (to arrange this, first pick $a_0$ which is suitable at primes over $2,3$, then pick a suitable $b_0$ given this). 

Combining Lemma \ref{lem:invariantcontinuity} with Propositions \ref{prop:S3C}  and \ref{prop:S3Qpss}, we see that Theorem \ref{thm:localthm} holds for $E_0/L_v$ whenever $v\neq \mathfrak{q}$. 
Since the 3-parity conjecture holds for elliptic curves over totally real fields by  \cite[Theorem E]{Nekovar3}, we have
\begin{eqnarray*}
1 &=& (-1)^{\rk_3E_0+\rk_3\Jac_{D_0}}w(E_0)w(\Jac_{D_0})\\[0.5em]
&\overset{\textup{Thm. \ref{thm:localformula}}}{=}&\prod_{v\textup{ place of }L}(-1)^{\ord_3\Lambda_\Theta(B_0/L_v)}w({E_0/L_v})w({\Jac_{D_0}/L_v})\\[0.5em]
&=&(-1)^{\ord_3\Lambda_\Theta(B_0/L_{\mathfrak q})}w({E_0/L_{\mathfrak q}})w({\Jac_{D_0}/L_{\mathfrak q}})\\[0.5em]
&\overset{\textup{Lem. \ref{lem:invariantcontinuity}}}{=}&(-1)^{\ord_3\Lambda_\Theta(B/\K)}w({E/\K})w({\Jac_{D}/\K}).
\end{eqnarray*}

Next, suppose that $\K/ \Q_2$ is finite. This time, take $L$ be a totally real number field with a unique prime $\mathfrak q\mid2$, which further satisfies $L_{\mathfrak q} \iso \K$. 
Choose $a_0,b_0\in \mathcal O_L$ to be $\mathfrak q$-adically close to $a,b$ respectively. 
For primes $\mathfrak p\nmid2,3$, ensure that $\mathfrak p\nmid b_0$ whenever $\mathfrak p\mid a_0$.
 Arguing as above, and using the newly proven case at primes over $3$, we see that   Theorem \ref{thm:localthm} holds for $E/\K$.

Finally, suppose that $\K/ \Q_p$ is finite. Repeating the argument for $p=2$ but replacing $2$ by $p$ and the condition $\mathfrak p \nmid 2,3$ by $\mathfrak p \nmid 2,3,p$ we conclude that Theorem \ref{thm:localthm} holds for $E/\K$.
\end{proof}

\begin{proof}[Proof of Theorem \ref{thm:localthm}] Combine Proposition \ref{prop:S3C} with Proposition \ref{prop:S3Qp}. 
\end{proof}

\section{Parity of ranks of Jacobians}\label{Sec_applications}
 
The aim of this section is to define the ``arithmetic analogue of root numbers'' $w_{\text{arith}}(X/\mathcal{K})$ for curves over local fields and prove Theorem \ref{thm:introjacobian}.

In \S\ref{sec:Rank_Parity} we explained how to compute the parity of the multiplicity of certain representations $\tau_{\Theta,p}$ inside Selmer groups of Jacobians using local data (see Definition \ref{Def: p-adically nontrivial reps} \& Theorem \ref{Theorem: Local Formula}). In this section, we describe how to apply this result to determine the parity of the rank of Jacobians of curves. This generalises the argument for elliptic curves given in Theorem \ref{thm:recipe}. We also comment on how our construction relates to the parity conjecture and prove Theorem \ref{thm:PCconj}.

\subsection{Regulator constants in $S_n$}

Theorem \ref{thm:snbrauer} below shows that there is a large supply of representations of the form $\tau_{\Theta,p}$. The proof relies on understanding these representations in dihedral groups and on an induction theorem for permutation representations. 

For ease of notation, we will write the result in terms of characters of representations. Recall that a generalised character is a formal $\Z$-linear combination of characters of representations. By a permutation character we mean a $\Z$-linear combination of characters of permutation representations.

\begin{lemma}\label{lem:RCdihedral}
Suppose $G=C_2\!\times\! C_2$, $D_8$ (in which case set $p\!=\!2)$ or $D_{2p}$ for an odd prime $p$. For every character $\tau$ of $G$ of degree 2, there is a Brauer relation $\Theta$ such that $\tau_{\Theta,p}=\tau-\triv-\det\tau$.
\end{lemma}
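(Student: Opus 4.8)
\emph{Proof proposal.} The plan is to rephrase the statement as one about $p$-adic valuations of regulator constants of the irreducibles of $G$, and then verify it case by case. Since the ambient representation may be taken to be the regular representation (all irreducibles occur), Definition \ref{Def: p-adically nontrivial reps} shows that ``$\tau_{\Theta,p}=\tau-\triv-\det\tau$'' is equivalent to the family of congruences $\langle\tau-\triv-\det\tau,\rho'\rangle\equiv\ord_p\mathcal{C}_\Theta(\rho')\pmod 2$, taken over all self-dual $\mathbb{Q}_p[G]$-irreducibles $\rho'$ (viewed over $\mathbb{C}$). By Theorem \ref{Theorem: Independence of pairing}(2) the assignment $\Theta\mapsto(\ord_p\mathcal{C}_\Theta(\rho'))_{\rho'}$ is additive, and $\Theta=0$ realises the zero family, so it suffices to write the target family as a sum (mod $2$) of the families attached to a short list of explicit Brauer relations. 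I would first dispose of two degenerate shapes of $\tau$: if $\triv$ is a constituent, say $\tau=\triv\oplus\chi$, then $\det\tau=\chi$ and $\tau-\triv-\det\tau=0$, so $\Theta=0$ works; if $\tau=2\chi$ for a single degree-$1$ character $\chi$, then — as $G^{\mathrm{ab}}$ has exponent $2$ — $\det\tau=\chi^2=\triv$ and $\tau-\triv-\det\tau=2(\chi-\triv)$ has only even multiplicities, so again $\Theta=0$. This reduces to (i) $\tau=\chi_1\oplus\chi_2$ with $\chi_1,\chi_2$ distinct non-trivial degree-$1$ characters (forcing $G=C_2\!\times\!C_2$ or $D_8$), and (ii) $\tau$ irreducible of degree $2$ (forcing $G=D_8$ or $D_{2p}$).

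For (i) I would use inflation of Brauer relations. Here $\{\triv,\chi_1,\chi_2,\chi_1\chi_2\}$ is the full character group of a quotient $q\colon G\twoheadrightarrow Q$ with $Q\cong C_2\!\times\!C_2$, and I take $\Theta=\textup{Infl}_Q^G(\Psi_0)$, where $\Psi_0=C_2^a+C_2^b+C_2^c-2Q-\{1\}$ and inflation means replacing each subgroup of $Q$ by its preimage in $G$. One checks that inflation sends Brauer relations to Brauer relations and that $\mathcal{C}_{\textup{Infl}_Q^G\Psi_0}(\rho')=\mathcal{C}_{\Psi_0}(\bar\rho')$ for every $\rho'$ factoring through $Q$ (the powers of $|\ker q|$ coming from the normalising factors $\tfrac1{|\widetilde H|}$ versus $\tfrac1{|H|}$ cancel because $\Psi_0$ is a relation). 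By the $C_2\!\times\!C_2$ computation recorded in Example \ref{ex:tautable} one has $\ord_p\mathcal{C}_{\Psi_0}(\xi)=1$ for each of the four degree-$1$ irreducibles $\xi$ of $Q$, so $\ord_p\mathcal{C}_\Theta$ is odd at $\triv,\chi_1,\chi_2,\chi_1\chi_2$; and when $G=D_8$ the remaining self-dual $\mathbb{Q}_p$-irreducible is the faithful $2$-dimensional one $\rho$, for which $\rho^{\widetilde H}=0$ for every subgroup $\widetilde H\supseteq\ker q$ appearing in $\Theta$, whence $\mathcal{C}_\Theta(\rho)=1$. Since $\langle\chi_1\oplus\chi_2-\triv-\chi_1\chi_2,-\rangle$ is odd at $\triv,\chi_1,\chi_2,\chi_1\chi_2$ and zero at $\rho$, this is exactly the required family. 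For (ii) with $G=D_{2p}$ I take $\Theta=2C_2+C_p-2D_{2p}-\{1\}$: the self-dual $\mathbb{Q}_p$-irreducibles are $\triv$, $\epsilon=\det\rho$ and one more, $\sigma$, with $\sigma\otimes_{\mathbb{Q}_p}\mathbb{C}$ the sum (each once) of the unique Galois orbit of $2$-dimensional complex irreducibles — using that $\mathbb{Q}_p(\zeta_p+\zeta_p^{-1})/\mathbb{Q}_p$ has degree $\tfrac{p-1}2$; the $D_{2p}$ row of Example \ref{ex:tautable} gives that $\ord_p\mathcal{C}_\Theta$ is odd at $\triv,\epsilon,\sigma$, and since any chosen $2$-dimensional irreducible $\tau$ occurs once in $\sigma\otimes\mathbb{C}$, the family $\langle\tau-\triv-\epsilon,-\rangle$ is odd at all three, as required (uniformly in $\tau$).

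The main obstacle is case (ii) for $G=D_8$, $p=2$, $\tau=\rho$ the $2$-dimensional irreducible, with (after relabelling) $\det\rho=\chi_0$, the degree-$1$ character whose kernel is the cyclic subgroup of order $4$. Writing the irreducibles as $\triv,\chi_0,\chi_1,\chi_2,\rho$ (all realised over $\mathbb{Q}$, so these are also all the $\mathbb{Q}_2$-irreducibles), the target family is the vector $(1,1,0,0,1)$, while the inflated relation $\Theta_1=\textup{Infl}_{D_8/[D_8,D_8]}^{D_8}(\Psi_0)$ of case (i) contributes $(1,1,1,1,0)$ — so no ``standard'' relation suffices on its own. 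I would adjoin a second relation $\Theta_2=\langle s\rangle+\langle r^2,rs\rangle-\langle r^2,s\rangle-\langle rs\rangle$ built from the non-normal order-$2$ subgroups, verifying on the five permutation characters $\mathbb{C}[D_8/H]$ that it is a Brauer relation, and then compute $\mathcal{C}_{\Theta_2}(\rho')$ for each $\rho'$ directly — using a rational model of $\rho$ and its standard invariant bilinear form to evaluate the Gram determinants, in particular $\mathcal{C}_{\Theta_2}(\rho)$ — to obtain the vector $(0,0,1,1,1)$. Then $\Theta=\Theta_1+\Theta_2$ has family $(1,1,1,1,0)+(0,0,1,1,1)=(1,1,0,0,1)$, which equals $\langle\rho-\triv-\chi_0,-\rangle\bmod 2$, so $\tau_{\Theta,2}=\rho-\triv-\det\rho$. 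The bookkeeping in this last case — the five permutation characters, the check that $\Theta_2$ is a relation, and a handful of one- and two-dimensional Gram-determinant evaluations — is routine, but it is where essentially all the computation lives.
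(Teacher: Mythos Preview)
The paper's proof is a bare citation to \cite[Examples 2.53, 2.54]{tamroot}, which compute the regulator constants of the irreducibles of $C_2\times C_2$, $D_8$ and $D_{2p}$ with respect to a basis of Brauer relations. Your approach --- computing the parity vectors $(\ord_p\mathcal{C}_\Theta(\rho'))_{\rho'}$ for explicit $\Theta$'s and matching them against $\langle\tau-\triv-\det\tau,\,\cdot\,\rangle\bmod 2$ --- is precisely what those examples do, so you are reproducing the cited argument in full detail rather than taking a different route. The computations you sketch (inflation from $C_2\times C_2$, the standard $D_{2p}$ relation, and the auxiliary $D_8$ relation $\Theta_2$) all check out.

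One small gap worth flagging: your reduction ``this reduces to (i) $\tau=\chi_1\oplus\chi_2$ \ldots\ and (ii) $\tau$ irreducible'' tacitly assumes $\tau$ is a genuine representation, whereas the paper's convention (Appendix~\ref{sec:Induction}) allows $\tau$ to be a virtual character of degree~$2$. For $C_2\times C_2$ and $D_{2p}$ this is harmless --- one checks that the parity vector of $\tau-\triv-\det\tau$ for virtual $\tau$ is always one already realised by an actual character. For $D_8$, however, a virtual $\tau$ (e.g.\ $\tau=\triv+\chi_1+\rho-\chi_0-\chi_2$) can yield the family $(0,0,1,1,1)$, which your case analysis does not cover. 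The fix is immediate: your own relation $\Theta_2$ realises exactly this family, so one extra line suffices. In any event, the proof of Theorem~\ref{thm:vind} only ever invokes the lemma with $\tau$ a genuine degree-$2$ character (either $\epsilon+\epsilon'$ or a $2$-dimensional irreducible), so the gap does not affect the applications.
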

\begin{proof}
See \cite[Examples 2.53, 2.54]{tamroot}.
\end{proof}

\begin{theorem}\label{thm:vind}
Let $G$ be a finite group. Every permutation character $\rho$ of $G$ of degree 0 and trivial determinant can be written in the form 
$$ 
  \rho = \sigma + \bar{\sigma} + \sum_i k_i \Ind_{H_i}^G (\tau_i - \triv_{H_i} - \det \tau_i),
$$
for some generalised degree 0 character $\sigma$ and degree 2 characters $\tau_i$ of subgroups $H_i$ that factor through quotients of $H_i$ isomorphic to $C_2\times C_2$, $D_8$ or $D_{2p_i}$ for some odd prime $p_i$, and $k_i\in\Z$.
\end{theorem}
\begin{proof}
See \cite[Theorem 1.2]{VInduction}.
\end{proof}

\begin{theorem}\label{thm:snbrauer}
Let $G=S_n$. Let $\rho$ be the irreducible $n\!-\!1$ dimensional character of $G$ that is the natural permutation character on $n$ points minus the trivial character, and $\epsilon=\det\rho$ the sign character. 
There are primes $p_i\le n$ and Brauer relations $\Theta_i$ in $G$ such that
$$
 -n\triv +  \epsilon + \rho \quad = \quad 2\sigma + \sum_i \tau_{\Theta_i, p_i} 
$$
for some character $\sigma$.
\end{theorem}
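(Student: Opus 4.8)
The plan is to deduce the identity from the Burnside-ring induction theorem, Theorem~\ref{thm:vind}, applied to $G=S_n$. Write $R(S_n)$ for the ring of (virtual) characters of $S_n$, and for $\psi,\psi'\in R(S_n)$ write $\psi\equiv\psi'$ to mean $\psi-\psi'\in 2R(S_n)$. Since every irreducible character of $S_n$ is rational-valued, any two elements of $\mathbf T_{\Theta,p}$ differ by an element of $2R(S_n)$, so the theorem is equivalent to the assertion that $\chi:=-n\triv+\epsilon+\rho$ lies in
$$
\cP:=\bigl\{\psi\in R(S_n):\psi\equiv\textstyle\sum_i\tau_i,\ \Theta_i\text{ Brauer relations in }S_n,\ p_i\le n\text{ prime},\ \tau_i\in\mathbf T_{\Theta_i,p_i}\bigr\}
$$
(with $\sigma=\tfrac12(\chi-\sum_i\tau_i)$; note $\sigma$ is necessarily virtual, since the multiplicity of $\triv$ in $\chi$ is $-n$). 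I would prove this by verifying that $\cP$ satisfies hypotheses (1)--(3) of Theorem~\ref{thm:vind} and then checking that $\chi$ is a permutation character of degree $0$ with trivial determinant.

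The last point is a direct computation: $\rho=\C[S_n/S_{n-1}]-\triv$ and $\epsilon=\C[S_n/A_n]-\triv$, so
$$
\chi=\C[S_n/S_{n-1}]+\C[S_n/A_n]-(n+2)\,\triv,
$$
which has degree $n+2-(n+2)=0$ and determinant $\det\C[S_n/S_{n-1}]\cdot\det\C[S_n/A_n]=\epsilon\cdot\epsilon=\triv$, since a transposition acts with determinant $-1$ on each of the permutation modules $\C[S_n/S_{n-1}]$ (the natural action on $n$ points) and $\C[S_n/A_n]$. For the hypotheses on $\cP$: (3) is immediate from the definition of $\equiv$, and (1) holds because $\tau+\bar\tau=2\tau\in 2R(S_n)$ for every character $\tau$ of $S_n$.

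The content is hypothesis (2). Given $H\le S_n$ and a degree-$2$ character $\tau$ of $H$ that is the inflation $\mathrm{infl}_Q^H\bar\tau$ of a degree-$2$ character $\bar\tau$ of a quotient $Q=H/N$ isomorphic to $C_2\times C_2$, $D_8$ (take $p=2$) or $D_{2p}$ for an odd prime $p$, I would argue in two steps. First, Lemma~\ref{lem:RCdihedral} provides a Brauer relation $\bar\Theta$ in $Q$ with $\tau_{\bar\Theta,p}=\bar\tau-\triv-\det\bar\tau$; inflating $\bar\Theta$ to $H$ (replace each subgroup of $Q$ occurring in $\bar\Theta$ by its preimage in $H$) yields a Brauer relation $\Theta_H$ in $H$, and since inflation along $H\twoheadrightarrow Q$ does not change the invariant subspaces under those preimages while $\bar\Theta$ equalises their dimensions, a short pairing computation gives $\mathcal{C}_{\Theta_H}(\rho)=\mathcal{C}_{\bar\Theta}(\rho^N)$ for every self-dual $\Q_p[H]$-representation $\rho$, so that $\tau-\triv_H-\det\tau=\mathrm{infl}_Q^H\tau_{\bar\Theta,p}$ is a legitimate choice of $\tau_{\Theta_H,p}$. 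Second, viewing the subgroups of $H$ appearing in $\Theta_H$ as subgroups of $S_n$ produces a Brauer relation $\Theta$ in $S_n$; compatibility of regulator constants with restriction gives $\mathcal{C}_{\Theta}(\rho)=\mathcal{C}_{\Theta_H}(\Res_H\rho)$ for every self-dual $\Q_p[S_n]$-representation $\rho$, and combined with Frobenius reciprocity this shows $\Ind_H^{S_n}\tau_{\Theta_H,p}\in\mathbf T_{\Theta,p}$. Hence $\Ind_H^{S_n}(\tau-\triv_H-\det\tau)\in\cP$, which is hypothesis (2). The primes that occur satisfy $p\le n$: for odd $p$ the quotient $D_{2p}$ of $H\le S_n$ forces $S_n$ to contain an element of order $p$, while $p=2\le n$ is trivial (the case $n=1$ being vacuous).

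I expect the main obstacle to be precisely the two compatibility statements invoked in the verification of (2): that the representation $\tau_{\Theta,p}$ may be chosen functorially under inflation along $H\twoheadrightarrow Q$ and under induction along $H\hookrightarrow S_n$. These rest on the behaviour of regulator constants under inflation, restriction and induction, and for honest Brauer relations --- which is all that is needed here --- they are available from \cite[\S2]{tamroot} (or can be extracted from the pairing description of \S\ref{sec:alt_desc}); once this is in hand, the proof reduces to bookkeeping rather than a new idea. Everything else is elementary manipulation in the character ring.
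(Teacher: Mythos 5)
Your proposal is correct and follows essentially the same route as the paper: observe that $-n\triv+\epsilon+\rho$ is a permutation character of degree $0$ with trivial determinant, feed it to Theorem~\ref{thm:vind}, and convert each $\Ind_H^{G}(\tau-\triv_H-\det\tau)$ term into a $\tau_{\Theta,p}$ using Lemma~\ref{lem:RCdihedral} together with the compatibility of regulator constants (hence of $\tau_{\Theta,p}$) with inflation from the $C_2\times C_2$/$D_8$/$D_{2p}$-quotient and induction to $S_n$. The only cosmetic difference is that you verify the hypotheses of Theorem~\ref{thm:vind} for a set $\cP$ defined modulo $2R(S_n)$ and sketch the inflation/induction compatibilities yourself, whereas the paper invokes the theorem's conclusion directly and cites \cite[Theorem 2.56(3)]{tamroot} for exactly those compatibilities.
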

 
\begin{proof}
Note that $-n\triv +  \epsilon + \rho$ is a permutation character of degree zero and trivial determinant. By Theorem \ref{thm:vind}, it can be written in the form
$$
-n\triv +  \epsilon + \rho\quad =\quad \sigma + \bar\sigma + \sum_{i} k_i \Ind_{G_i}^G (\tau_i - \triv_{G_i} - \det\tau_i),
$$
for some $k_i\in\Z$, $G_i\le G$ and characters $\tau_i$ of $G_i$ of degree 2 that factor through a $C_2\!\times\! C_2$-, $D_8$- or $D_{2p_i}$-quotient of $G_i$ for an odd prime $p_i\le n$; we set $p_i=2$ in case of a $C_2\!\times\! C_2$- or $D_8$-quotient.

As all characters of $S_n$ are real-valued, $\sigma=\bar\sigma$, so that $\sigma+\bar\sigma=2\sigma$.

By Lemma \ref{lem:RCdihedral} 
there is a $G_i$-Brauer relation $\Theta_i'=\sum_j m_{j,i}H_{j,i}$ (with $m_{j,i}\in\Z$ and $H_{j,i}\le G_i$) such that $\tau_{\Theta_i',p_i}=\tau_i - \triv_{G_i} - \det\tau_i$. Taking $n_{j,i}=k_i m_{j,i}$, by \cite[Theorem 2.56(3)]{tamroot} the $G$-Brauer relation $\Theta_i=\sum_j n_{j,i}H_{j,i}$ has
$$
 \tau_{\Theta_i, p_i} = k_i \Ind_{G_i}^G (\tau_i - \triv_{G_i} - \det\tau_i).
$$
The result follows.
\end{proof}

The rough strategy for determining the parity of the rank of $\Jac_Y$ for a general curve $Y$ is now as follows.
Suppose for this discussion that $\sha$ is always finite. Write $Y$ as a degree $n$ cover of $\P^1$ with Galois group $S_n$, and let $X$ correspond to its Galois closure, e.g. $E$ and $B$ in Example \ref{ex:S3part1}. Taking the inner product of the formula in Theorem \ref{thm:snbrauer} with $\Jac_X(K)\otimes\Q$ and using Theorems \ref{thm:introgaloisdescent} and \ref{LocalFormula:Intro} gives an expression of the form
$$
 \rk \Jac_Y + \rk \Jac_{X/A_n} \equiv \text{(local data)} \bmod 2.
$$
This reduces the problem to determining the parity of $\rk \Jac_{X/A_n}$. As $A_n$ has index 2 in $S_n$, $X/A_n$ is a hyperelliptic curve, say $y^2=h(x)$ for $h$ of degree $m$. Applying this construction again to $X/A_n$ viewed as a degree $m$ cover (!) of $\P^1$ reduces the problem to understanding the parity of the rank of the Jacobian of another hyperelliptic curve which, this time, is always of the form $y^2=g(x^2)$ (like the curve $D$ in Example \ref{ex:S3part1}). The latter curve has $C_2\times C_2 $ in its automorphism group, and the parity of the rank of its Jacobian can be determined using Example \ref{ex:C2C2rankrecipe}.

We now formalise this construction.

\subsection{Arithmetic analogue of root numbers}

\begin{notation}
Let $L$ be a field.
We write $\P^1_x$ for $\P^1$ (over $L$) together with a choice of an element $x$ that generates its function field over $L$.
\end{notation}

\begin{definition} \label{definition:discriminant_curve}
Throughout this definition we consider curves over a field $L$ of characteristic 0.

(i) For a cover $Y\to \P^1$ of degree $n$ we write $X_Y$ for its $S_n$-closure. This is a Galois cover $X_Y\to \P^1$ with automorphism group $S_n$ and $X_Y/S_{n-1}\simeq Y$; see \cite{Etale_paper}, Section 2.5.
We write $\Delta_Y=X_Y/A_n$ for the associated ``discriminant curve'' (cf. \cite{Etale_paper}, Lemma 2.7).

(ii) For a cover $Y\to\P^1_x$, the function field of $\Delta_Y$ is a quadratic extension of $K(x)$, and so it can be written as $K(x)(\sqrt{a g(x)})$ for a unique monic squarefree $g(x)\in K[x]$ and an element $a \in K^\times$ that is well-defined up to multiplication by $K^{\times 2}$.
Strictly speaking, throughout this definition we work with \'etale algebras. That is by $K(x)(\sqrt{h(x)})$ we mean the algebra $K(x)[t]/(t^2-h(x))$, which is of degree 2 over $K(x)$ even when $h(x)$ is a perfect square.

Define $\Delta^1_Y$ to be the double cover of $\P^1_x$ with function field $K(x)(\sqrt{g(x)})$; it is the quadratic twist of the hyperelliptic curve $\Delta_Y$ by $a$. Writing $y\!=\!\sqrt{g(x)}$, $\Delta^1_Y: y^2=g(x)$ comes with a well-defined cover $\Delta^1_Y\to \P^1_y$, which only depends on $Y\to \P^1_x$.

Define $\Delta^2_Y$ to be the $C_2\!\times\! C_2$-cover of $\P^1_x$ with function field $K(x)(\sqrt{g(x)}, \sqrt{a})$.

(iii) For a cover $Y\to\P^1_x$ of the form $x^2=f(z)$, the curve $\Delta_Y$ is endowed with an action of $C_2\!\times\!C_2$ with quotient $\mathbb P^1_{x^2}$. Indeed, the discriminant (in $x$) of $f(z)-x^2$ is visibly a polynomial in $x^2$, so that $\Delta_Y$ is given by $y^2=ah(x^2)$ for some polynomial $h(x)$. Thus $C_2\!\times\!C_2$ acts on $\Delta_{Y}$ via the automorphisms $x \mapsto \pm x$ and $y \mapsto \pm y$.
\end{definition}

\begin{notation}
For every $n\ge 1$, fix a list of primes $p_i=p_i^{(n)}$ and of Brauer relations $\Theta_i=\Theta_i^{(n)}$ in $S_n$ that satisfy the conclusion of Theorem \ref{thm:snbrauer}.
\end{notation}

\begin{definition}\label{def:alpha} 
Let $\cK$ be a local field of characteristic $0$.

(i)
For a cover $Y\to \P^1$ of degree $n$ over $\cK$ define
$$
 \alpha(Y\to \P^1/\cK) = \sum_i \ord_{p_i^{(n)}} \Lambda_{\Theta_i^{(n)}}(X_Y/\cK).
$$

(ii)
For a cover $Y\to\P^1_x$ over $\cK$ define
$$
 \beta(Y\to \P^1_x/\cK) = \ord_2 \Lambda_{\Theta}(\Delta^2_Y/\cK),
$$
where $\Theta = 2C_2\!\times\! C_2 - C_2^{a} - C_2^{b}-C_2^{c}+ \{1\}$ is a $C_2\!\times\! C_2$-Brauer relation.

(iii)
For a cover $Y\to\P^1_x$ of the form $x^2\!=\!f(z)$ over $\cK$, define
$$
 \gamma(Y\to \P^1_x/\cK) = \ord_2 \Lambda_{\Theta}(\Delta_Y/\cK),
$$
where $\Theta = 2C_2\!\times\! C_2 - C_2^{a} - C_2^{b}-C_2^{c}+ \{1\}$ is a $C_2\!\times\! C_2$-Brauer relation of $\Delta_Y\to\P^1_{x^2}$.

(iv)
For a cover $Y\to\P^1_x$ over $\cK$ we define the ``arithmetic analogue of the local root number'' (with respect to the cover $Y\to \P^1_x$)  $w_{\rm{arith}}(Y/\cK)\in\{\pm 1\}$ as
$$
  w_{\rm{arith}}(Y/\cK) = (-1)^{\alpha(Y\to\P^1_{x}/\cK)} (-1)^{\beta(Y\to\P^1_{x}/\cK)} (-1)^{\alpha(\Delta^1_Y\to\P^1_{y}/\cK)} (-1)^{\gamma(\Delta^1_Y\to\P^1_{y}/\cK)}.
$$
\end{definition}

\begin{remark}
The constructions of curves and automorphism groups given in Definition \ref{definition:discriminant_curve} have been set up so as to commute with extensions of scalars. Thus, for example, if $Y\to \P^1$ is defined over a number field $K$ and $v$ is a place of $K$, then ``$X_Y/K_v$'' in Definition \ref{def:alpha} can be obtained by constructing $X_Y$ over $K$ from $Y/K$. This is essential in order to apply global results on Selmer groups (Theorem \ref{Theorem: Local Formula}).
\end{remark}

\begin{remark}
The invariant $w_{\rm{arith}}(Y/\cK)$ depends on the map $Y\to \P^1_x$, although we have suppressed it in the notation.
It also depends on the choice of Brauer relations in symmetric groups used in Theorem \ref{thm:snbrauer}. The latter is a purely group theoretic choice. It is applied to $S_n$ and $S_m$ where $n$ and $m$ are the degrees of $Y\to\P^1_x$ and $\Delta^1_Y\to \P^1_{y}$, respectively.
\end{remark}

\subsection{Parity of ranks of Jacobians}

\begin{theorem}\label{thm:snparity}
Let $Y\to \P^1$ be a cover of degree $n$ over a number field $K$. Let $X_Y\to\P^1$ be its $S_n$ closure and $\Delta_Y\!=\! X_Y/A_n$ its associated discriminant curve. 

(i) If the $p$-primary part of $\sha(\Jac_{X_Y})$ is finite for all primes $p \le n$, then
$$
   \rk \Jac_Y + \rk \Jac_{\Delta_Y} \equiv \sum_{v\textup{ place of }K} \alpha(Y\to\P^1/K_v) \mod 2.
$$

(ii) If Conjecture \ref{conj:brauerwishful} holds, then
$$
  w(\Jac_Y)w(\Jac_{\Delta_Y})= \prod_{v\textup{ place of }K} (-1)^{\alpha(Y\to\P^1/K_v)}
.$$
\end{theorem}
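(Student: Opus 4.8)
The plan is to deduce both parts from Theorem~\ref{thm:snbrauer} by taking inner products with the relevant Selmer group or Galois cohomology, using the Galois-descent and local-formula machinery already established.

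\textbf{Part (i).} First I would set $G = S_n$ acting on $X = X_Y$ via its $S_n$-closure, so that $X/A_n = \Delta_Y$ and $X/S_n = \P^1$, while the subgroup $S_{n-1}$ fixing a point recovers $Y = X/S_{n-1}$ (this is the content of Definitions~\ref{def: Sn-closure}, \ref{def:snclosurecurve}). By Lemma~\ref{lemma: Properties of Omega1}, since all representations of $S_n$ are self-dual, $\Omega^1(\JX)$ is self-dual as a $G$-representation, so Theorem~\ref{Theorem: Local Formula}/Corollary~\ref{lem:rephrasemain} applies to each Brauer relation $\Theta_i^{(n)}$ at the prime $p_i^{(n)}$, giving
$$
\langle \tau_{\Theta_i^{(n)},p_i^{(n)}},\, \cX_{p_i^{(n)}}(\JX)\rangle \equiv \sum_v \ord_{p_i^{(n)}} \Lambda_{\Theta_i^{(n)}}(X/K_v) \bmod 2.
$$
Summing over $i$ and using Definition~\ref{def:alpha}, the right-hand side becomes $\sum_v \alpha(Y\to\P^1/K_v)$. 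For the left-hand side, I would take the inner product of the character identity from Theorem~\ref{thm:snbrauer},
$$
-n\triv + \epsilon + \rho = 2\sigma + \sum_i \tau_{\Theta_i,p_i},
$$
against the Mordell--Weil group $\JX(K)\otimes\Q$ (here $\epsilon = \det\rho$ is the sign character, and $\rho$ the standard $(n-1)$-dimensional character). The $2\sigma$ term contributes something even, hence vanishes mod $2$. The key subtlety is that the $\tau_{\Theta_i,p_i}$-multiplicities that appear in the local formula are multiplicities in $\cX_{p_i}(\JX)$, not in $\JX(K)\otimes\Q$; these agree mod $2$ precisely because the $p_i$-primary part of $\sha(\JX)$ is finite, since then $\cX_{p_i}(\JX) \cong (\JX(K)\otimes\Q_{p_i})$ as $G$-representations (this is the role of the hypothesis "$p$-primary part of $\sha(\Jac_{X_Y})$ finite for all $p\le n$", using $p_i \le n$ from Theorem~\ref{thm:snbrauer}). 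Having made this replacement, $\langle -n\triv+\epsilon+\rho,\ \JX(K)\otimes\Q\rangle$ is congruent mod $2$ to $\sum_v\alpha(Y\to\P^1/K_v)$. Finally I would identify the left-hand multiplicities with ranks via Theorem~\ref{thm:introgaloisdescent}(1) (equivalently Corollary~\ref{Proposition: pseudo Brauer for Xp}(2)): $\langle\triv,\JX(K)\otimes\Q\rangle = \rk\Jac_{\P^1} = 0$, $\langle\epsilon,\JX(K)\otimes\Q\rangle = \rk\Jac_{\Delta_Y}$ (as $\epsilon$ is the nontrivial character of $S_n/A_n$ and $\Delta_Y = X/A_n$ is the hyperelliptic discriminant curve), and $\langle\rho,\JX(K)\otimes\Q\rangle = \rk\Jac_Y$ (as $\rho = \C[S_n/S_{n-1}] - \triv$ and $Y = X/S_{n-1}$). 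This yields $\rk\Jac_Y + \rk\Jac_{\Delta_Y} \equiv \sum_v\alpha(Y\to\P^1/K_v)\bmod 2$.

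\textbf{Part (ii).} The argument is formally parallel but with root numbers in place of Selmer ranks, and does not require finiteness of $\sha$. I would apply the Artin formalism of Proposition~\ref{prop:artinformalism} to the same character identity: $w(X^{\rho\oplus\rho'}) = w(X^\rho)w(X^{\rho'})$ shows $w(X^{-n\triv+\epsilon+\rho}) = w(X^\rho)w(X^\epsilon)w(X^\triv)^{-n}$ while $w(X^{2\sigma}) = w(X^\sigma)^2 = 1$, so
$$
w(X^\rho)\, w(X^\epsilon)\, w(X^\triv)^{-n} = \prod_i w(X^{\tau_{\Theta_i,p_i}}).
$$
By Proposition~\ref{prop:artinformalism}, $w(X^{\Ind_H^G\triv}) = w(X/H)$; taking $H = S_n$ gives $w(X^\triv) = w(\P^1) = 1$, taking $H = A_n$ gives $w(X^{\triv\oplus\epsilon}) = w(\Delta_Y)$ hence $w(X^\epsilon) = w(\Delta_Y)$ (again $w(X^\triv)=1$), and taking $H = S_{n-1}$ gives $w(X^{\triv\oplus\rho}) = w(Y)$ hence $w(X^\rho) = w(\Jac_Y)$ where throughout $w(\Jac_Z) = w(Z^\triv\text{ for its automorphism group})$ in the sense of Definition~\ref{def:root} applied to the trivial group, i.e.\ the usual global root number. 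Thus the left side equals $w(\Jac_Y)w(\Jac_{\Delta_Y})$. For the right side, Conjecture~\ref{conj:brauerwishful} applied to each $\Theta_i$ at $p_i$ gives $\prod_v w(X^{\tau_{\Theta_i,p_i}}/K_v) = \prod_v (-1)^{\ord_{p_i}\Lambda_{\Theta_i}(X/K_v)}$, so $w(X^{\tau_{\Theta_i,p_i}}) = \prod_v(-1)^{\ord_{p_i}\Lambda_{\Theta_i}(X/K_v)}$; multiplying over $i$ and using Definition~\ref{def:alpha} gives $\prod_i w(X^{\tau_{\Theta_i,p_i}}) = \prod_v(-1)^{\alpha(Y\to\P^1/K_v)}$. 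Combining yields the claim.

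\textbf{Main obstacle.} The genuinely delicate point is part (i): one must be careful that the local formula (Corollary~\ref{lem:rephrasemain}) naturally outputs multiplicities in the dual $p^\infty$-Selmer group $\cX_p(\JX)$, whereas the representation-theoretic identity is being paired against the Mordell--Weil group to extract ranks. Bridging these requires exactly the stated $\sha$-finiteness hypothesis (for each prime $p\le n$ separately, matching the bound $p_i\le n$ in Theorem~\ref{thm:snbrauer}), together with the observation from Theorem~\ref{thm:introgaloisdescent}/Corollary~\ref{Proposition: pseudo Brauer for Xp} that characters absent from $V_\ell(\JX)$ are absent from both, so that the identity can legitimately be paired against either module. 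Once this bookkeeping is in place, everything else is a direct substitution into already-proven theorems, and part (ii) is purely formal given Conjecture~\ref{conj:brauerwishful} and Proposition~\ref{prop:artinformalism}.
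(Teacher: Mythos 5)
Your proposal is correct and follows essentially the same route as the paper: pair the character identity of Theorem \ref{thm:snbrauer} against $\Jac_{X_Y}(K)\otimes\Q$, use the finiteness of the $p_i$-primary parts of $\sha$ (with $p_i\le n$) to convert the Selmer multiplicities of Corollary \ref{lem:rephrasemain} into Mordell--Weil multiplicities, identify the $\triv$, $\rho$, $\det\rho$ contributions with $\rk\Jac_{\P^1}$, $\rk\Jac_Y$, $\rk\Jac_{\Delta_Y}$ via Galois descent, and sum over $i$; part (ii) is the same formal computation with root numbers via Proposition \ref{prop:artinformalism} and Conjecture \ref{conj:brauerwishful}. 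Your explicit handling of the $\cX_{p}$-versus-Mordell--Weil bookkeeping and of $w(X^{2\sigma})=1$ just spells out points the paper leaves implicit.
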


\begin{proof}
Write $\rho$ for the irreducible $n\!-\!1$ dimensional character of $G$ that is the natural permutation character on $n$ points minus the trivial character.

(i) We have
$$
\langle \Jac_{X_Y}(K), \triv\rangle = \rk \Jac_{\P^1} =0,
$$
$$
\langle \Jac_{X_Y}(K), \rho\rangle = \rk \Jac_{X_Y/S_{n-1}} = \rk \Jac_Y, 
$$
$$
\langle \Jac_{X_Y}(K), \det\rho\rangle = \rk \Jac_{X_Y/A_n} = \rk \Jac_{\Delta_Y}. 
$$
By Theorem \ref{Theorem: Local Formula} and the assumed finiteness of the $p_i^{(n)}$-primary part of $\sha$,
$$
\langle \Jac_{X_Y}(K),  \tau_{\Theta_i^{(n)},p_i^{(n)}} \rangle \equiv \sum_v \ord_{p_i^{(n)}} \Lambda_{\Theta_i^{(n)}}(Y/K_v) \mod 2.
$$
Taking the sum over $i$, we get
$$
  \rk \Jac_{\Delta_Y}+ \rk \Jac_Y = \langle \Jac_{X_Y}(K), -n\triv+\rho+\det\rho\rangle \equiv \sum_v \alpha(Y\to\P^1/K_v) \mod 2.
$$

(ii) 
The proof is analogous to (i). Using Proposition \ref{prop:artinformalism} we see that 
$$
w(X_Y^{\triv})\!=\!1, \qquad w(X_Y^{\rho})\!=\!w(Y) \qquad \text{and}\qquad w(X_Y^{\det\rho})=w(\Delta_Y).
$$
 By Conjecture \ref{conj:brauerwishful}, $w(X_Y^{\tau_{\Theta_i^{(n)}, p_i^{(n)} }})= \prod_v (-1)^{\ord_{p_i^{(n)}}\Lambda_{\Theta_i^{(n)}}(Y/K_v)}$. Taking the product over $i$ gives the result.
\end{proof}

\begin{lemma}\label{lem:delta2parity}
Let $Y\to \P^1_x$ be a cover over a number field $K$. 

(i) If $\sha(\Jac_{\Delta_Y})$ and $\sha(\Jac_{\Delta^1_Y})$ have finite 2-primary part, then
$$
   \rk \Jac_{\Delta_Y} + \rk \Jac_{\Delta^1_Y}  \equiv  \sum_{v\textup{ place of }K} \beta(Y\to\P^1_x/K_v) \mod 2.
$$

(ii) If Conjecture \ref{conj:brauerwishful} holds, then
$$
  w(\Jac_{\Delta_Y})w(\Jac_{\Delta^1_Y}) = \prod_{v\textup{ place of }K} (-1)^{\beta(Y\to\P^1_x/K_v)}.
$$
\end{lemma}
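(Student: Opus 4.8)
The plan is to apply the local rank formula of Corollary~\ref{lem:rephrasemain} to the curve $X:=\Delta^2_Y$ with the natural action of $G:=C_2\!\times\!C_2=\langle s,t\rangle$, where $s$ fixes $\sqrt a$ and negates $\sqrt{g(x)}$, and $t$ fixes $\sqrt{g(x)}$ and negates $\sqrt a$. Since every irreducible representation of $C_2\!\times\!C_2$ is a real linear character, $\Omega^1(\Jac_X)$ is automatically self-dual, so the hypotheses of Corollary~\ref{lem:rephrasemain} are met. Reading off the function field $K(x)(\sqrt{g(x)},\sqrt a)$, the three index-$2$ subgroups give the quotients
\[
X/\langle s\rangle=\P^1\ (\textup{function field }K(x)(\sqrt a)),\quad X/\langle t\rangle=\Delta^1_Y,\quad X/\langle st\rangle=\Delta_Y,\quad X/G=\P^1_x,
\]
where $\P^1$ denotes a (possibly disconnected) curve of genus $0$, with trivial Jacobian. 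Write $\chi_s,\chi_t,\chi_{st}$ for the non-trivial characters of $G$, with $\chi_h$ having kernel $\langle h\rangle$; by Example~\ref{ex:tautable} we may take $\tau_{\Theta,2}=\triv\oplus\chi_s\oplus\chi_t\oplus\chi_{st}$, the regular representation (the Brauer relation in the definition of $\beta$ is the negative of the one in Example~\ref{ex:tautable}, which changes neither $\tau_{\Theta,2}$ nor $\ord_2\Lambda_\Theta$ modulo~$2$).

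Next I would compute $\langle\tau_{\Theta,2},\mathcal{X}_2(\Jac_X)\rangle$ by Galois descent, Corollary~\ref{Proposition: pseudo Brauer for Xp}(2): for $H\le G$, the $H$-invariants of $\mathcal{X}_2(\Jac_X)$ form the sum of the isotypic components for the characters trivial on $H$, and are isomorphic to $\mathcal{X}_2(\Jac_{X/H})$. Taking $H=G$ and $H=\langle s\rangle$, whose quotients have trivial Jacobian, gives $\langle\triv,\mathcal{X}_2(\Jac_X)\rangle=\langle\chi_s,\mathcal{X}_2(\Jac_X)\rangle=0$; taking $H=\langle t\rangle$ and $H=\langle st\rangle$ gives $\langle\chi_t,\mathcal{X}_2(\Jac_X)\rangle=\rk_2\Jac_{\Delta^1_Y}$ and $\langle\chi_{st},\mathcal{X}_2(\Jac_X)\rangle=\rk_2\Jac_{\Delta_Y}$. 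Summing, $\langle\tau_{\Theta,2},\mathcal{X}_2(\Jac_X)\rangle=\rk_2\Jac_{\Delta_Y}+\rk_2\Jac_{\Delta^1_Y}$, so Corollary~\ref{lem:rephrasemain} applied to $\Theta$ and $p=2$ yields
\[
\rk_2\Jac_{\Delta_Y}+\rk_2\Jac_{\Delta^1_Y}\equiv\sum_{v}\ord_2\Lambda_\Theta(\Delta^2_Y/K_v)=\sum_v\beta(Y\to\P^1_x/K_v)\mod 2.
\]
For part~(i), finiteness of $\sha(\Jac_{\Delta_Y})[2^\infty]$ and $\sha(\Jac_{\Delta^1_Y})[2^\infty]$ forces $\delta_2=0$ for both Jacobians, so $\rk_2$ may be replaced by $\rk$ throughout; this is essentially Example~\ref{ex:C2C2rankrecipe} applied to $\Delta^2_Y$ with the quotients identified.

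For part~(ii) I would run the same argument with root numbers in place of Selmer ranks. By Proposition~\ref{prop:artinformalism}, $w(X^{\rho\oplus\rho'})=w(X^\rho)w(X^{\rho'})$ and $w(X^{\Ind_H^G\triv})=w(\Jac_{X/H})$. Since $\C[G/\langle h\rangle]\cong\Ind_{\langle h\rangle}^G\triv\cong\triv\oplus\chi_h$, this gives $w(X^{\triv})w(X^{\chi_h})=w(\Jac_{X/\langle h\rangle})$, and as $w(X^{\triv})=w(X^{\Ind_G^G\triv})=w(\Jac_{\P^1_x})=1$ we conclude $w(X^{\chi_h})=w(\Jac_{X/\langle h\rangle})$. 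Hence $w(X^{\tau_{\Theta,2}})=w(X^{\triv})w(X^{\chi_s})w(X^{\chi_t})w(X^{\chi_{st}})=w(\Jac_{\Delta_Y})w(\Jac_{\Delta^1_Y})$, the $\triv$ and $\chi_s$ factors being $1$. Conjecture~\ref{conj:brauerwishful} for $\Theta$ and $p=2$ states $\prod_v w(X^{\tau_{\Theta,2}}/K_v)(-1)^{\ord_2\Lambda_\Theta(X/K_v)}=1$, which rearranges to the claimed identity $w(\Jac_{\Delta_Y})w(\Jac_{\Delta^1_Y})=\prod_v(-1)^{\beta(Y\to\P^1_x/K_v)}$. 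The only steps requiring care are the representation-theoretic bookkeeping — matching each quotient to the character trivial on the corresponding subgroup, and noting that the two genus-$0$ quotients annihilate two of the four isotypic components — after which both parts follow directly from the machinery already established.
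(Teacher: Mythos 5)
Your proposal is correct and follows essentially the same route as the paper: part (i) is Theorem \ref{Theorem: Local Formula} (via Corollary \ref{lem:rephrasemain}) applied to the $C_2\times C_2$-Brauer relation for $\Delta^2_Y\to\P^1_x$ with $p=2$, with the Galois-descent identification of the isotypic pieces and finiteness of the $2$-primary $\sha$ converting $\rk_2$ to $\rk$, and part (ii) is Proposition \ref{prop:artinformalism} combined with Conjecture \ref{conj:brauerwishful} for the same relation. Your extra bookkeeping (the genus-$0$ quotients killing two isotypic components, and the sign of the Brauer relation being immaterial mod $2$) is exactly the detail the paper leaves implicit.
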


\begin{proof}
(i) This is Theorem \ref{Theorem: Local Formula} applied to the $C_2\!\times\! C_2$-Brauer relation of $\Delta_Y^2\to\P^1_x$ with $p=2$. 

(ii) This is Proposition \ref{prop:artinformalism} combined with Conjecture \ref{conj:brauerwishful} for the same Brauer relation.
\end{proof}

\begin{lemma}\label{lem:deltadeltaparity}
Let $Y\to \P^1_x$ be a cover of the form $x^2\!=\!f(z)$ over a number field $K$.

(i) If $\sha(\Jac_{\Delta_Y})$ has finite 2-primary part, then
$$
 \rk \Jac_{\Delta_Y}  \equiv  \sum_{v\textup{ place of }K} \gamma(Y\to\P^1_{x}/K_v) \mod 2.
$$

(ii) If Conjecture \ref{conj:brauerwishful} holds, then
$$
  w(\Jac_{\Delta_Y}) = \prod_{v\textup{ place of }K} (-1)^{\gamma(Y\to\P^1_{x}/K_v)}.
$$
\end{lemma}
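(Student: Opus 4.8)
The plan is to mirror the proof of Lemma \ref{lem:delta2parity}, since Lemma \ref{lem:deltadeltaparity} is of the same nature but applied to a different curve, namely the discriminant curve $\Delta_Y$ itself rather than the auxiliary cover $\Delta_Y^2$. By Lemma \ref{lem: DeltaYforGivenCover}, when $Y\to\P^1_x$ is of the form $x^2=f(z)$, the discriminant curve $\Delta_Y$ is given by $y^2=aG(x^2)$ and carries a $C_2\!\times\! C_2$-action, realised by $x\mapsto\pm x$ and $y\mapsto\pm y$, making it a $C_2\!\times\! C_2$-cover of $\P^1_{x^2}$. This is precisely the situation of Example \ref{ex:C2C2rankrecipe}: the three intermediate quotients of $\Delta_Y$ by the order-2 subgroups $C_2^a$, $C_2^b$, $C_2^c$ are hyperelliptic curves (or $\P^1$s), and the quotient by $C_2\!\times\! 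C_2$ is $\P^1_{x^2}$, so its Jacobian has rank $0$.

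For part (i): apply Theorem \ref{Theorem: Local Formula} to the curve $X=\Delta_Y$ with $G=C_2\!\times\! C_2$ acting as above, and with the Brauer relation $\Theta = 2\,C_2\!\times\! C_2 - C_2^a - C_2^b - C_2^c + \{1\}$. First I would note that $\Omega^1(\Jac_{\Delta_Y})$ is automatically self-dual as a $G$-representation since all representations of $C_2\!\times\! C_2$ are realisable over $\Q$. Theorem \ref{Theorem: Local Formula} with $p=2$ then gives
$$
\langle \tau_{\Theta,2}, \mathcal{X}_2(\Jac_{\Delta_Y})\rangle \equiv \sum_{v\textup{ place of }K} \ord_2 \Lambda_\Theta(\Delta_Y/K_v) \mod 2,
$$
and the right-hand side is exactly $\sum_v \gamma(Y\to\P^1_x/K_v)$ by the definition of $\gamma$. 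For the left-hand side, from Example \ref{ex:tautable} we have $\tau_{\Theta,2} = \triv\oplus\epsilon_a\oplus\epsilon_b\oplus\epsilon_c$, the sum of all four characters of $C_2\!\times\! C_2$, i.e. the regular representation $\C[C_2\!\times\! C_2]$. Using Theorem \ref{thm:introgaloisdescent}(2) (i.e. Corollary \ref{Proposition: pseudo Brauer for Xp}) applied to the various subgroups, one identifies $\langle\triv,\mathcal{X}_2(\Jac_{\Delta_Y})\rangle = \rk_2\Jac_{\P^1_{x^2}} = 0$ and $\langle\epsilon_a\oplus\epsilon_b\oplus\epsilon_c, \mathcal{X}_2(\Jac_{\Delta_Y})\rangle$; summing the multiplicities over the three order-2 quotients and subtracting the (vanishing) contribution of the trivial part shows $\langle\tau_{\Theta,2},\mathcal{X}_2(\Jac_{\Delta_Y})\rangle = \rk_2\Jac_{\Delta_Y}$. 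Finally, the hypothesis that $\sha(\Jac_{\Delta_Y})$ has finite $2$-primary part gives $\delta_2 = 0$ for $\Jac_{\Delta_Y}$, so $\rk_2\Jac_{\Delta_Y} = \rk\Jac_{\Delta_Y}$, yielding the claim. (One must also check that each order-2 quotient's $\sha$ has finite $2$-part, but since $\Delta_Y$ surjects onto each such quotient, finiteness of the $2$-part of $\sha(\Jac_{\Delta_Y})$ forces it by Corollary \ref{Proposition: pseudo Brauer for Xp}(3) applied after identifying quotient Jacobians as isogeny factors.)

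For part (ii): this is purely formal given Conjecture \ref{conj:brauerwishful} and Proposition \ref{prop:artinformalism}. Applying Conjecture \ref{conj:brauerwishful} to $X=\Delta_Y$, $G=C_2\!\times\! C_2$, and the Brauer relation $\Theta$ with $p=2$ gives $\prod_v w(\Delta_Y^{\tau_{\Theta,2}}/K_v)(-1)^{\ord_2\Lambda_\Theta(\Delta_Y/K_v)} = 1$. By Proposition \ref{prop:artinformalism}, $w(\Delta_Y^{\tau_{\Theta,2}}/K) = w(\Delta_Y^{\triv})w(\Delta_Y^{\epsilon_a})w(\Delta_Y^{\epsilon_b})w(\Delta_Y^{\epsilon_c})$, and using $w(\Delta_Y^{\Ind_H^G\triv}) = w(\Delta_Y/H)$ together with the decomposition of each $\Ind_{C_2^\bullet}^G\triv$ as $\triv\oplus\epsilon_\bullet$, one gets that this product equals $w(\Jac_{\Delta_Y})$ times powers of $w(\Jac_{\P^1})=1$. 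Hence $w(\Jac_{\Delta_Y}) = \prod_v(-1)^{\gamma(Y\to\P^1_x/K_v)}$.

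I expect the main obstacle to be bookkeeping rather than conceptual: carefully verifying that the representation-theoretic inner-product computation on the left-hand side of Theorem \ref{Theorem: Local Formula} collapses to exactly $\rk_2\Jac_{\Delta_Y}$ (and in part (ii), to $w(\Jac_{\Delta_Y})$), using the correct identification of the $C_2\!\times\! C_2$-quotients of $\Delta_Y$ from Lemma \ref{lem: DeltaYforGivenCover} — in particular confirming that the $C_2\!\times\! C_2$-quotient is genuinely $\P^1$ (so contributes nothing) and that the finiteness hypothesis on $\sha(\Jac_{\Delta_Y})$ suffices to control the $\delta_2$-terms of all the relevant subquotients. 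Everything else is a direct citation of the machinery already assembled.
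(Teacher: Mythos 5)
Your proposal is correct and follows essentially the same route as the paper, which proves (i) by applying Theorem \ref{Theorem: Local Formula} to the $C_2\!\times\!C_2$-Brauer relation of $\Delta_Y\to\P^1_{x^2}$ with $p=2$, and (ii) by combining Proposition \ref{prop:artinformalism} with Conjecture \ref{conj:brauerwishful} for the same relation. Your only extra work (tracking the quotient curves and their $\sha$) is unnecessary: since $\tau_{\Theta,2}$ is the regular representation of $C_2\!\times\!C_2$, the left-hand side is literally $\dim\mathcal{X}_2(\Jac_{\Delta_Y})=\rk_2\Jac_{\Delta_Y}$, so only the finiteness hypothesis on $\sha(\Jac_{\Delta_Y})$ itself is needed.
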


\begin{proof}
(i) This is Theorem \ref{Theorem: Local Formula} applied to the $C_2\!\times\! C_2$-Brauer relation of $\Delta_Y\to\P^1_{x^2}$ with $p=2$.
(ii) This is Proposition \ref{prop:artinformalism} combined with Conjecture \ref{conj:brauerwishful} for the same Brauer relation.
\end{proof}

\begin{theorem}\label{thm:PCanalogue}
Let $Y\to \P^1_x$ be a degree $n$ cover defined over a number field $K$, and let $X_Y$ be its $S_n$-closure.
Let $\Delta^1_Y\to \P^1_{y}$ be the associated cover $y^2=g(x)$, let $m$ be its degree and $X_{\Delta^1_Y}$ its $S_m$-closure.

(i) If $\sha(\Jac_{X_Y})$ and $\sha(\Jac_{X_{\Delta^1_Y}})$
have finite $p$-primary part for $p\le n$ and $p\le m$, 
respectively, then
$$   
(-1)^{\rk \JY} =  \prod_{v\textup{ place of }K} w_{\rm{arith}}(Y/K_v).
$$

(ii) If Conjecture \ref{conj:brauerwishful} holds, then
$$
 w(\JY) = \prod_{v\textup{ place of }K} w_{\rm{arith}}(Y/K_v).
$$
\end{theorem}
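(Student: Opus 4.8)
The plan is to observe that $\prod_v w'(Y/K_v)$ is, by the definition of $w'$, the product of the four sign-factors $\prod_v(-1)^{\alpha(Y\to\P^1_x/K_v)}$, $\prod_v(-1)^{\beta(Y\to\P^1_x/K_v)}$, $\prod_v(-1)^{\alpha(\Delta^1_Y\to\P^1_y/K_v)}$ and $\prod_v(-1)^{\gamma(\Delta^1_Y\to\P^1_y/K_v)}$ (each a finite product of signs, as guaranteed by the cited results), and to recognise each of these as a contribution that telescopes against the next. A preliminary point to pin down is that $\Delta^1_Y\to\P^1_y$, by its construction in Definition \ref{definition:discriminant_curve} (function field $K(x)(\sqrt{g(x)})$ with $y=\sqrt{g(x)}$), is a cover of the shape $y^2=g(x)$, i.e.\ it is ``of the form $x^2=f(z)$'' in the sense required by Lemmas \ref{lem: DeltaYforGivenCover} and \ref{lem:deltadeltaparity}; hence $\beta$, $\gamma$ and all the discriminant curves that appear are well defined, and by Lemma \ref{lem: DeltaYforGivenCover} the discriminant curve of $\Delta^1_Y\to\P^1_y$ is precisely $\Delta_{\Delta^1_Y}=X_{\Delta^1_Y}/A_m$, a $C_2\!\times\!C_2$-cover of $\P^1_{y^2}$.

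For part (i), I would apply, in turn: Theorem \ref{thm:snparity}(i) to $Y\to\P^1_x$, giving $\prod_v(-1)^{\alpha(Y\to\P^1_x/K_v)}=(-1)^{\rk\Jac_Y+\rk\Jac_{\Delta_Y}}$; Lemma \ref{lem:delta2parity}(i), giving $\prod_v(-1)^{\beta(Y\to\P^1_x/K_v)}=(-1)^{\rk\Jac_{\Delta_Y}+\rk\Jac_{\Delta^1_Y}}$; Theorem \ref{thm:snparity}(i) to $\Delta^1_Y\to\P^1_y$ (whose discriminant curve is $\Delta_{\Delta^1_Y}$), giving $\prod_v(-1)^{\alpha(\Delta^1_Y\to\P^1_y/K_v)}=(-1)^{\rk\Jac_{\Delta^1_Y}+\rk\Jac_{\Delta_{\Delta^1_Y}}}$; and Lemma \ref{lem:deltadeltaparity}(i) to $\Delta^1_Y\to\P^1_y$, giving $\prod_v(-1)^{\gamma(\Delta^1_Y\to\P^1_y/K_v)}=(-1)^{\rk\Jac_{\Delta_{\Delta^1_Y}}}$. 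Multiplying these four identities, every rank other than $\rk\Jac_Y$ occurs with even multiplicity, so the right-hand side collapses to $(-1)^{\rk\JY}$. For part (ii) the argument is identical but uses the ``(ii)'' halves of Theorem \ref{thm:snparity} and Lemmas \ref{lem:delta2parity}, \ref{lem:deltadeltaparity} (which need only Conjecture \ref{conj:brauerwishful} and no finiteness hypotheses): the four factors become $w(\Jac_Y)w(\Jac_{\Delta_Y})$, $w(\Jac_{\Delta_Y})w(\Jac_{\Delta^1_Y})$, $w(\Jac_{\Delta^1_Y})w(\Jac_{\Delta_{\Delta^1_Y}})$ and $w(\Jac_{\Delta_{\Delta^1_Y}})$, whose product telescopes to $w(\Jac_Y)=w(\JY)$, using also Proposition \ref{prop:artinformalism} to identify $w(X_Y^\rho)=w(\Jac_Y)$ etc.\ as in the proof of Theorem \ref{thm:snparity}.

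I expect the only genuinely delicate point to be matching up the $\sha$-finiteness hypotheses in part (i). Theorem \ref{thm:snparity}(i) applied to $Y\to\P^1_x$ (resp.\ to $\Delta^1_Y\to\P^1_y$) requires finiteness of the $p$-primary part of $\sha(\Jac_{X_Y})$ for $p\le n$ (resp.\ of $\sha(\Jac_{X_{\Delta^1_Y}})$ for $p\le m$), which is exactly what is assumed. The remaining $2$-primary finiteness needed by Lemmas \ref{lem:delta2parity}(i) and \ref{lem:deltadeltaparity}(i) concerns $\Jac_{\Delta_Y}=\Jac_{X_Y/A_n}$, $\Jac_{\Delta^1_Y}=\Jac_{X_{\Delta^1_Y}/S_{m-1}}$ and $\Jac_{\Delta_{\Delta^1_Y}}=\Jac_{X_{\Delta^1_Y}/A_m}$, each of which is isogenous to a quotient of $\Jac_{X_Y}$ or of $\Jac_{X_{\Delta^1_Y}}$; so it follows from the hypotheses via Corollary \ref{Corollary: finiteness of sha}, whose proof (through part (2) of that corollary) refines verbatim to $p$-primary parts, once one disposes of the degenerate cases $n\le 1$ or $m\le 1$, in which the relevant curve is a $\P^1$ and the statement is vacuous. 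Beyond this bookkeeping, the proof is a purely formal telescoping of the local formulae already established in Theorem \ref{thm:snparity} and Lemmas \ref{lem:delta2parity}, \ref{lem:deltadeltaparity}.
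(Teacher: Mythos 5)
Your proposal is correct and follows exactly the paper's argument: the paper's proof is precisely the telescoping combination of Theorem \ref{thm:snparity} for $Y\to\P^1_x$ and for $\Delta^1_Y\to\P^1_y$ with Lemma \ref{lem:delta2parity} for $Y\to\P^1_x$ and Lemma \ref{lem:deltadeltaparity} for $\Delta^1_Y\to\P^1_y$, with the auxiliary $\sha$-finiteness for $\Jac_{\Delta_Y}$, $\Jac_{\Delta^1_Y}$ and $\Jac_{\Delta_{\Delta^1_Y}}$ deduced from the hypotheses via Corollary \ref{Corollary: finiteness of sha}, just as you do. Your bookkeeping of the hypotheses is, if anything, slightly more explicit than the paper's.
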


\begin{proof}
Note that if the $p$-primary part of $\sha(\Jac_{X_Y})$ (respectively of $\sha(\Jac_{X_{\Delta^1_Y}})$) is finite, then so is that of $\sha(\Jac_{\Delta_Y})$ (respectively $\sha(\Jac_{\Delta_{\Delta^1_Y}})$) by Theorem \ref{thm:introgaloisdescent}(6).

Now combine Theorem \ref{thm:snparity} for $Y\to \P^1_x$ and for $\Delta^1_Y\to\P^1_{y}$ together with Lemma \ref{lem:delta2parity} for $Y\to \P^1_x$ and with Lemma \ref{lem:deltadeltaparity} for $\Delta^1_Y\to\P^1_{y}$.
\end{proof}

\begin{corollary}\label{cor:howtoproveparity}
If the Shafarevich--Tate conjecture and Conjecture \ref{conj:brauerwishful} hold for all Jacobians of curves over a fixed number field $K$, then so does parity conjecture, that is
$$
  (-1)^{\rk \JY} = w(\JY)
$$
for every curve $Y$ defined over $K$.
\end{corollary}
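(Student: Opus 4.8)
The plan is to deduce the statement directly from Theorem~\ref{thm:PCanalogue}, which already assembles the two halves of the argument. Given a curve $Y/K$, first choose a non-constant $K$-rational function on $Y$, giving a cover $Y\to\P^1_x$ of some degree $n$ (for $Y$ with several connected components one does this componentwise; by the paper's convention curves need not be connected, and nothing below is sensitive to this). This produces the auxiliary curves in the statement of Theorem~\ref{thm:PCanalogue}: the $S_n$-closure $X_Y$, the discriminant curve $\Delta_Y$ and its twist $\Delta^1_Y\to\P^1_y$ of some degree $m$, and the $S_m$-closure $X_{\Delta^1_Y}$. Crucially, each of these is again a curve over $K$ in the generalised sense used throughout the paper, so the global hypotheses in force apply to them.

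Next I would check the two sets of hypotheses. Since $X_Y$ and $X_{\Delta^1_Y}$ are curves over $K$, the assumed Shafarevich--Tate conjecture for Jacobians of curves over $K$ makes $\sha(\Jac_{X_Y})$ and $\sha(\Jac_{X_{\Delta^1_Y}})$ finite, hence with finite $p$-primary part for every $p$; thus Theorem~\ref{thm:PCanalogue}(i) applies and gives $(-1)^{\rk\JY}=\prod_v w'(Y/K_v)$. (The self-duality of $\Omega^1$ implicitly required is automatic: all representations of the symmetric groups and of $C_2\times C_2$ occurring here are realisable over $\Q$, hence self-dual, by Lemma~\ref{lemma: Properties of Omega1}.) On the other hand, the assumed Conjecture~\ref{conj:brauerwishful}, applied to the relevant curves-with-automorphisms over $K$ (i.e.\ $X_Y$ with its $S_n$-action, $X_{\Delta^1_Y}$ with its $S_m$-action, and the $C_2\times C_2$-covers used in defining $\alpha$, $\beta$ and $\gamma$), makes Theorem~\ref{thm:PCanalogue}(ii) applicable, giving $w(\JY)=\prod_v w'(Y/K_v)$. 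Comparing the two identities yields $(-1)^{\rk\JY}=w(\JY)$.

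I do not anticipate a genuine obstacle: all the substance lies in Theorem~\ref{thm:PCanalogue} (and, behind it, in Theorems~\ref{thm:snbrauer} and \ref{LocalFormula:Intro} and the induction theorem~\ref{thm:introvind}), and this corollary is merely the observation that its two global inputs---finiteness of $\sha$ for Jacobians of curves over $K$ and Conjecture~\ref{conj:brauerwishful}---are precisely what is needed to run both parts of that theorem for the family of auxiliary curves attached to an arbitrary cover $Y\to\P^1$. The only point worth a sentence of care, already noted above, is that $S_n$-closures, discriminant curves and their quadratic twists remain within the class of curves over $K$ to which the hypotheses are assumed to apply; this is immediate from the paper's relaxed definition of a curve.
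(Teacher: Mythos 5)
Your proposal is correct and follows exactly the route the paper intends: the corollary is an immediate consequence of Theorem \ref{thm:PCanalogue}, whose part (i) applies because the assumed Shafarevich--Tate conjecture over $K$ gives finiteness of $\sha(\Jac_{X_Y})$ and $\sha(\Jac_{X_{\Delta^1_Y}})$ (these being Jacobians of curves over $K$ in the paper's generalised sense), and whose part (ii) applies under Conjecture \ref{conj:brauerwishful}; comparing the two expressions for $\prod_v w'(Y/K_v)$ gives the result. Your side remarks (choice of a $K$-rational map $Y\to\P^1$, and self-duality of $\Omega^1$ for $S_n$- and $C_2\times C_2$-actions) are exactly the points the paper handles the same way.
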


\section{On the parity conjecture for Jacobians}\label{Sec_ss_parity}  

We end with a discussion on what one would need in order to prove the parity conjecture for general Jacobians, assuming finiteness of Tate--Shafarevich groups.
Recall from Corollary \ref{cor:howtoproveparity} that the supply of Brauer relations is sufficiently large as to control the parity of ranks of all Jacobians using the local invariants $\Lambda$.
What is required is the connection to local root numbers given by Conjecture \ref{conj:brauerwishful}. Recall that this asserts that
$$
\prod_{v\textup{ place of }K} w(X^{\tau_{\Theta,p}}/K_v) (-1)^{\ord_p\Lambda_\Theta(X/K_v)}=1
$$
for every curve $X$ with a finite group $G$ acting by automorphisms, every prime $p$ and every Brauer relation $\Theta$ for $X$.

As the following theorem illustrates, the local terms $w(X^{\tau_{\Theta,p}}/K_v)$ and $(-1)^{\ord_p\Lambda_\Theta(X/K_v)}$ often match. Part (1) of the theorem is a special case of the root number formula given in Proposition~\ref{prop:rootnumber}. We will not prove part (2) here; it will appear in forthcoming work \cite{morganlattice}. 

\begin{theorem}\label{thm:compatibility}
Let $X$ be a curve over a local field $\cK$ of characteristic $0$ and $G$ a finite group of $\cK$-automorphisms of $X$, such that $\Omega^1(\Jac_X)$ is self-dual as a $G$-representation.
For an orthogonal representation $\tau$ of $G$:
\begin{enumerate}
\item[(1a)]
 $w(X^\tau/\cK) = (-1)^{\langle\tau,\Omega^1(\Jac_X)\rangle}$ if $\cK$ is archimedean; 
\item[(1b)]
 $w(X^\tau/\cK) = (-1)^{\langle \tau, (\dcg_{\Jac_X}\otimes\C)^{\Frob_\cK} \rangle}$ if $\cK$ is non-archimedean, $X/\cK$ is semistable.
\end{enumerate}
For every odd prime $p$ and pseudo Brauer relation $\Theta$ for $X$:
\begin{enumerate}
\item[(2a)] 
$\ord_p \Lambda_\Theta(X/\cK) = \langle \tau_{\Theta,p}, \Omega^1(\Jac_X) \rangle$ if $\cK=\C$;
\item[(2b)] 
$\ord_p \Lambda_\Theta(X/\cK) = \langle \tau_{\Theta,p}, (\dcg_{\Jac_X}\otimes\C)^{\Frob_\cK} \rangle$ if $X/\cK$ is semistable and either $\cK$ non-archimedean of residue characteristic different from $p$ or if $\cK$ is an extension of $\Q_p$ of even degree.
\end{enumerate}
\end{theorem} 

Since $\tau_{\Theta,p}$ is always orthogonal, 
under the assumptions of the theorem, \[w(X^{\tau_{\Theta,p}}/\cK)=(-1)^{\ord_p\Lambda_\Theta(X/\cK)}.\] In particular, Conjecture \ref{conj:brauerwishful} holds for curves over number fields that satisfy these local constraints at all places. In view of Theorem \ref{Theorem: Local Formula}, we deduce the following case of the $p$-parity conjecture for $X^{\tau_{\Theta,p}}$ (see Conjecture \ref{conj:wishful}):

\begin{corollary} \label{cor:mostgeneralPC}
Let $X$ be a curve over a number field $K$, $p$ an odd prime and $G$ a finite group of $K$-automorphisms of $X$ such that $\Omega^1(\JX)$ is self-dual as a $G$-representation.
Suppose moreover that $X$ is semistable, $K$ is totally complex and that $[K_v:\mathbb{Q}_p]$ is even for all $v\mid p$. 
Then for all pseudo Brauer relations $\Theta$ for $X$ 
$$
 (-1)^{\langle\tau_{\Theta,p}, \mathcal{X}_p(\JX) \rangle} =  w(X^{\tau_{\Theta,p}}).
$$
\end{corollary}

A more general version is proved in \cite{morganlattice}, which removes some of the assumptions at $\ell=p$ and allows real places. The local terms no longer agree in such generality. However, the discrepancy can be expressed in terms of a certain Artin symbol, which, when ones takes the product over all places of a number field, vanishes by the product formula. The case $p\!=\!2$ is more difficult, although not without progress. In particular, the parity conjecture for hyperelliptic curves with suitably good local behaviour should be within reach (see \cite[\S1.3.3]{hollythesis}).

%
%

\end{document}